\theoremstyle{plain}
\newtheorem{theorem}{Theorem}[section]
\newtheorem{proposition}[theorem]{Proposition}
\newtheorem{lemma}[theorem]{Lemma}
\newtheorem{corollary}[theorem]{Corollary}
\theoremstyle{definition}
\theoremstyle{remark}
\newtheorem{example}{Example}[section]
\title{Good Integers:  A Comprehensive Review with Applications}
\author{Somphong Jitman\footnote{S. Jitman is with the Department of Mathematics, Faculty of Science, Silpakorn University, Nakhon Pathom 73000, THAILAND}}
\date{October 17, 2025}
\begin{document}

\maketitle

\begin{abstract}
    For nonzero coprime integers $a$ and $b$, a positive integer $\ell$ is said to be \emph{good with respect to $a$ and $b$} if there exists a positive integer $k$ such that $\ell$ divides $a^{k} + b^{k}$. The concept of good integers has been the subject of continuous investigation since the 1990s due  to their elegant number-theoretic properties and their significant applications in various areas, particularly in coding theory.  This paper provides a comprehensive review of good integers, emphasizing both their theoretical foundations and their practical implications. We first revisit the fundamental number-theoretic properties of good integers and present their characterizations in a systematic manner. The exposition is enriched with well-structured algorithms and illustrative diagrams that facilitate their computation and classification.  Subsequently, we explore applications of good integers in the study of algebraic coding theory. In particular, their roles in the characterization,  construction, and enumeration  of self-dual cyclic codes and  complementary dual cyclic codes  are discussed in detail. Several examples are provided to demonstrate the applicability of the theory. This review not only consolidates existing results but also highlights the unifying role of good integers in bridging number theory and coding theory.

    \noindent {\bf Keywords:}{ Good integers, oddly-good integers, evenly-good integers,   complementary dual cyclic codes, self-dual cyclic codes}
\end{abstract}

 \section{Introduction}	
\label{sec1-intro}
Good integers constitute a notable subfamily of positive integers that exhibit elegant and deep number-theoretic properties. Their structural richness has led to a variety of applications in both pure and applied mathematics. In particular, good integers play a significant role in the study of the algebraic properties of certain classes of polynomials, especially those arising in finite fields and modular arithmetic. Furthermore, they have been effectively employed in coding theory, where they contribute to the characterization and construction of specific families of codes, such as self-dual cyclic codes and complementary dual cyclic codes, under both the Euclidean and Hermitian inner products. The goal of this paper is to   provide a comprehensive review of good integers, emphasizing both their theoretical foundations and their practical implications. We mainly focus on presenting the main ideas and the connections among the results, while omitting the detailed proofs. For complete proofs, the reader is referred to the original references.

The concept of \emph{good integers} has been  formally introduced by P.~Moree in \cite{M1997} as part of  the study of  the divisors of  $a^k+b^k$. 
Given two fixed coprime nonzero integers $a$ and $b$, a positive integer $d$ is said  to be \emph{good (with respect to $a$ and $b$)} if there exists a positive integer $k$ such that $d$ divides $a^k+b^k$. 
 The integer  $d$ is called a \emph{bad integer} if no such $k$ exists.  To make the discussion more systematic,  let $G_{(a,b)}$ denote the set of all good integers defined with respect to the  integers  $a$ and $b$.  
The initial study for the case of  odd integers in  $G_{(a,b)}$ was established in \cite{M1997}, laying the foundation for subsequent research in this direction.  Even before the  formal treatment in \cite{M1997}, certain aspects of the set $G_{(q,1)}$, where $q$ is a prime power, had been investigated in \cite{KG1969}.
These early results were applied to the construction of BCH codes with prescribed design distances, thereby highlighting the relevance of good integers and applications in coding theory.  Building upon these connections, further properties of $G_{(q,1)}$ were subsequently investigated in \cite{S2003}, where the framework was employed to determine the average   hull dimension of cyclic codes, thus extending the scope of their applications.   The usefulness of good integers in coding theory continued to expand in the following years. 
In particular, the set $G_{(q,1)}$ was applied in the characterization of the existence of  some self-dual codes with respect to the Hermitian inner product over finite fields of square order, as reported in \cite{DMS2007} and \cite{DMS2014}. 
In another direction, the special case $G_{(2^\nu,1)}$ was employed to enumerate Euclidean self-dual cyclic codes of even length $n$ over finite fields $\mathbb{F}_{2^\nu}$ in \cite{JLX2011}.  Subsequently,  the good integers in $G_{(2^\nu,1)}$ were applied in the composition of group algebras   over  $\mathbb{F}_{2^\nu}$ which enabled the characterization and enumeration of self-dual abelian codes  defined under the Euclidean inner product  in \cite{JLLX2012}. 
 Further applications were presented in \cite{SJLP2015}, where $G_{(q,1)}$ played a central role in calculating the  hulls of  cyclic and negacyclic codes of arbitrary length  over $\mathbb{F}_q$. In this study,  the hull dimensions of such codes were established as well. 
More recently, attention has shifted to the study of good even integers, initiated in \cite{J2018a}, in which their characterization and number-theoretic properties were analyzed. 
Although some minor inaccuracies appeared in that work, they were subsequently corrected in \cite{J2018b} and \cite{JPR2020}, thereby refining the theoretical understanding of good even integers and completing the picture established in the earlier studies.

In \cite{J2018a}, the notion of good integers was refined by introducing two subclasses, namely the class of  {evenly-good integers} and the class of  {oddly-good integers}. 
For fixed nonzero coprime integers $a$ and $b$, a positive integer $d$ is called 
\emph{evenly-good (with respect to $a$ and $b$)} if $d$ divides $a^k+b^k$ for some even positive integer $k$, while it is called
\emph{oddly-good (with respect to $a$ and $b$)} if $d$ divides $a^k+b^k$ for some odd positive integer $k$. 
Hence, every good integer can be recognized as either  an  evenly-good integer or an  oddly-good integer. 
The corresponding sets are denoted by $EG_{(a,b)}$ and $OG_{(a,b)}$, respectively. 
It follows directly from the definition that
$
   G_{(a,b)} = OG_{(a,b)} \cup EG_{(a,b)}
$. Due to the commutativity $a^k+b^k=b^k+a^k$,   we have the relations: 
$G_{(a,b)}=G_{(b,a)}$, $OG_{(a,b)}=OG_{(b,a)}$, and $EG_{(a,b)}=EG_{(b,a)}$. 
A complete characterization of such two subclasses was established in \cite{J2018a}, providing a finer understanding of the structure of good integers.  Some preliminary investigations of these subclasses had already appeared earlier in \cite{JLS2014}, where  the sets $OG_{(2^\nu,1)}$ and $EG_{(2^\nu,1)}$ were examined and subsequently applied in the study of   self-dual abelian codes defined under the Hermitian inner product over $\mathbb{F}_{2^{2\nu}}$. 
Further progress was made in \cite{JS2016}, where the set $OG_{(q,1)}$ was utilized  in the study of  hulls   of cyclic codes over $\mathbb{F}_{q^2}$.  In this work,   the average Hermitian hull dimension of  such codes has been established using integers in $OG_{(q,1)}$.  
More recently, these concepts have also played a role in the analysis of special factors of the polynomial $x^n - 1$ and  $x^n + 1$ over finite fields in  \cite{BJ2021}, \cite{BJ2021-s}, and  \cite{BJU2019}.

The paper is organized as follows.   Section~\ref{sec2} recalls the definitions, characterizations, and key properties of good integers. This section also presents a diagrammatic framework and a useful algorithm for identifying and classifying good integers.  Section~\ref{sec3} is devoted to a detailed investigation of two notable families of good integers, namely the the class of evenly-good  integers and the class of oddly-good integers. 
In this section, their distinctive algebraic features are analyzed and more refined characterizations are established. 
To complement the theoretical results, diagrammatic illustrations and a specialized algorithm adapted to these subclasses are also presented, with particular emphasis on their potential utility in algebraic studies and applications to coding theory.   Section~\ref{sec4} turns to a survey of selected applications of good integers, with special attention to oddly-good integers. 
These applications include their role in examining the algebraic structure and factorization of certain polynomial families over finite fields as well as their significance in the construction, characterization,  and enumeration of several classes of cyclic codes over finite fields. 
In particular, connections to self-dual and complementary dual cyclic codes are highlighted, both in the Euclidean and Hermitian settings.   Finally, Section~\ref{sec5}  presents a summary of the main findings and provides perspectives for future research in this area.

\section{Good Integers} \label{sec2}
This section presents a concise summary of the fundamental properties and characterization of good odd integers, as originally established in \cite{M1997}, together with the properties of good even integers that were subsequently developed in \cite{J2018a},   \cite{J2018b}, and \cite{JPR2020}.  
  To enhance visualization and support further applications, we include a diagram that illustrates the characterization of these good integers. Additionally, we provide an algorithm designed to facilitate the practical computation and identification of good integers.

For an integer $i$ and a nonzero integer    $j$, we write $j |i$ if $j$ divides $i$, and 
we use the notation $2^i || j$ to indicate that  $i$ is the largest nonnegative integer with the property that $2^i | j$.  
For a positive integer $N$, let $\mathrm{ord}(a)$ denote the order of an element $a$ in the additive group $\mathbb{Z}_N$. 
For integers  $a$ and $b$    coprime to $N$, the multiplicative  of $b$ modulo $N$ exists, we denote it by   $b^{-1}$.  Let $\mathrm{ord}_N(a)$ denote the multiplicative order of $a$ modulo $N$ and let 
$
   \mathrm{ord}_N\!\left(\tfrac{a}{b}\right) :=  \mathrm{ord}_N\!\left( ab^{-1}\right) $.

\subsection{Good Odd Integers}
\label{subsec2.1}

This subsection presents the complete characterization of good odd integers, as established in~\cite{M1997}. These foundational results are instrumental for the development of the theory of good even integers in the next subsection and for characterizing certain subclasses of good integers in Section \ref{sec3}.

From the definition of $G_{(a,b)}$, it is immediate that $1$ is always a good integer.  The properties of other good odd integers have been extensively established in \cite{M1997} and \cite{J2018a}. The key findings and significant results from these studies are summarized below.

\begin{lemma}[{\cite[Lemma 2.1]{J2018a}}]
	\label{gcd-1-ab} Let $a$ and $ b$  be nonzero coprime  integers and let $d$ be a positive    integer. If    $d\in G_{(a,b)}$, then     $\gcd(a,d)=1=\gcd(b,d)$.  
\end{lemma}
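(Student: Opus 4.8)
The plan is to prove the contrapositive-style statement directly: assuming $d \in G_{(a,b)}$, I want to show $\gcd(a,d)=1$ and $\gcd(b,d)=1$. By definition of a good integer, $d \in G_{(a,b)}$ means there is a positive integer $k$ with $d \mid a^k + b^k$. The idea is to exploit the hypothesis that $a$ and $b$ are coprime, together with this divisibility, to force any common divisor of $a$ and $d$ (and symmetrically $b$ and $d$) to be trivial.

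First I would set $g = \gcd(a,d)$ and argue by showing $g \mid b^k$. Since $d \mid a^k + b^k$ and $g \mid d$, we have $g \mid a^k + b^k$. Also $g \mid a$ gives $g \mid a^k$, so subtracting yields $g \mid (a^k + b^k) - a^k = b^k$. Thus $g$ divides both $a^k$ and $b^k$. The next step is to translate this into a statement about $\gcd(a,b)$: because $\gcd(a,b)=1$, a standard fact gives $\gcd(a^k, b^k)=1$, and since $g$ is a common divisor of $a^k$ and $b^k$ it must divide their gcd, forcing $g = 1$. By the symmetry $G_{(a,b)} = G_{(b,a)}$ noted in the introduction (or simply by repeating the same argument with the roles of $a$ and $b$ interchanged, using $g' = \gcd(b,d)$ and $g' \mid a^k$), we likewise obtain $\gcd(b,d)=1$.

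The only point requiring a small amount of care is the passage from $\gcd(a,b)=1$ to $\gcd(a^k,b^k)=1$; I would justify it either by noting that coprimality is preserved under taking powers (any prime dividing $a^k$ divides $a$, and similarly for $b$, so a common prime factor of $a^k$ and $b^k$ would be a common prime factor of $a$ and $b$), or by invoking the multiplicativity of the coprimality relation. I do not expect any genuine obstacle here, as the result is elementary; the main thing to keep track of is simply the direction of the implication and the clean reduction $g \mid a^k + b^k$ together with $g \mid a^k$ yielding $g \mid b^k$.
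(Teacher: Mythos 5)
Your proof is correct, and it is the standard argument: the paper itself cites this lemma from \cite[Lemma 2.1]{J2018a} without reproducing a proof, and the reduction $g \mid a^k+b^k$, $g \mid a^k$ $\Rightarrow$ $g \mid b^k$ combined with $\gcd(a^k,b^k)=1$ is exactly the expected route. No gaps.
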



Based on Lemma~\ref{gcd-1-ab},  it is important to note that each integer in $G_{(a,b)}$ must be coprime to both integers $a$ and $b$. From now on, suppose that $a$ and $b$ are nonzero coprime integers, and that all underlying integers under consideration are coprime to both $a$ and $b$.

In the sequel, we establish a number of fundamental results concerning good odd prime powers.

\begin{proposition}
	[{\cite[Proposition 2]{M1997}}]\label{2order} 
    Let $p$ be an odd prime and let $r$ be a positive integer. If $p^r \in G_{(a,b)}$, then ${\rm ord}_{p^r}(\frac{a}{b})=2s$, where $s$ is the smallest positive integer such that $({a}{b}^{-1})^s\equiv -1 \,({\rm mod}\, p^{r})$. 
\end{proposition}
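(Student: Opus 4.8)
The plan is to translate the divisibility condition defining goodness into a statement about the multiplicative order of $ab^{-1}$ modulo $p^r$, and then exploit the fact that $-1$ is the only nontrivial square root of unity modulo an odd prime power. First, by Lemma~\ref{gcd-1-ab}, the hypothesis $p^r\in G_{(a,b)}$ forces $\gcd(a,p^r)=\gcd(b,p^r)=1$, so $b$ is invertible modulo $p^r$ and $\alpha:=ab^{-1}$ is a well-defined unit. The assumption that $p^r\mid a^k+b^k$ for some positive $k$ is then equivalent to $\alpha^k\equiv -1\pmod{p^r}$; in particular the set of positive integers $k$ with $\alpha^k\equiv -1\pmod{p^r}$ is nonempty, so the minimal such $s$ exists and is well-defined. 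This sets everything up in terms of $n:=\mathrm{ord}_{p^r}(\alpha)=\mathrm{ord}_{p^r}(\frac{a}{b})$, and the goal becomes showing $n=2s$.

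Next I would establish that $n\mid 2s$ and that $n$ is even. Since $\alpha^s\equiv -1$, squaring gives $\alpha^{2s}\equiv 1$, hence $n\mid 2s$. Because $p$ is odd we have $p^r>2$, so $-1\not\equiv 1\pmod{p^r}$; therefore $\alpha^s\equiv -1\not\equiv 1$ shows $n\nmid s$. Combining $n\mid 2s$ with $n\nmid s$ rules out $n$ being odd, since an odd $n$ dividing $2s$ would already divide $s$. Hence $n$ is even and $n/2$ is a genuine positive integer.

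The crux is to identify $s$ with $n/2$. Consider $\alpha^{n/2}$: it satisfies $(\alpha^{n/2})^2=\alpha^n\equiv 1$, so it is a square root of $1$ modulo $p^r$. The key arithmetic fact, valid precisely because $p$ is odd, is that the only square roots of $1$ modulo $p^r$ are $\pm 1$; indeed $p^r\mid (x-1)(x+1)$ together with $\gcd(x-1,x+1)\mid 2$ and $p$ odd forces $p^r\mid x-1$ or $p^r\mid x+1$. Since $n/2<n=\mathrm{ord}_{p^r}(\alpha)$, we have $\alpha^{n/2}\not\equiv 1$, and hence $\alpha^{n/2}\equiv -1$. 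Now any positive $j$ with $\alpha^j\equiv -1\equiv \alpha^{n/2}$ satisfies $\alpha^{j-n/2}\equiv 1$, i.e. $j\equiv n/2\pmod{n}$, so the least such $j$ is $n/2$. By the minimality of $s$ this yields $s=n/2$, that is, $\mathrm{ord}_{p^r}(\frac{a}{b})=n=2s$, as claimed.

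I expect the main obstacle to be this last step, and specifically the clean justification that $-1$ is the unique nontrivial square root of unity modulo $p^r$; this is exactly where the oddness of $p$ is indispensable, since for $p=2$ and $r\ge 3$ there are four square roots of unity modulo $p^r$ and the conclusion can fail. Once that fact is in hand, identifying $\alpha^{n/2}$ with $-1$ and reading off $s=n/2$ from minimality is routine.
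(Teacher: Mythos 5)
Your argument is correct and complete: the reduction of $p^r\mid a^k+b^k$ to $\alpha^k\equiv-1\pmod{p^r}$ for the unit $\alpha=ab^{-1}$, the deduction that $n=\mathrm{ord}_{p^r}(\alpha)$ is even from $n\mid 2s$ and $n\nmid s$, and the identification $\alpha^{n/2}\equiv-1$ via the fact that $\pm1$ are the only square roots of unity modulo an odd prime power all hold, and the minimality argument correctly yields $s=n/2$. The paper itself omits the proof and simply cites Moree; your write-up is the standard argument for this fact, so there is nothing to flag beyond noting that you have correctly isolated where the oddness of $p$ is used.
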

\begin{proposition}
	[{\cite[Proposition 4]{M1997}}]\label{ord} Let $p$ be an odd prime and let $r$ be a positive integer. Then ${\rm ord}_{p^r}(\frac{a}{b})={\rm ord}_{p}(\frac{a}{b})p^i$ for some $i\geq 0$. 
\end{proposition}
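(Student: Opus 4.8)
The plan is to work inside the multiplicative group $(\mathbb{Z}_{p^r})^\times$, whose order is $\varphi(p^r) = p^{r-1}(p-1)$, and to exploit the reduction-modulo-$p$ homomorphism $\pi \colon (\mathbb{Z}_{p^r})^\times \to (\mathbb{Z}_{p})^\times$. Writing $c = ab^{-1}$ reduced modulo $p^r$, set $d = {\rm ord}_{p}(\frac{a}{b})$ and $e = {\rm ord}_{p^r}(\frac{a}{b})$; the goal is to produce an $i \ge 0$ with $e = d\,p^{i}$. The first, routine step is to record that $d \mid e$: since $c^{e} \equiv 1 \pmod{p^r}$ forces $c^{e} \equiv 1 \pmod{p}$, the order $d$ of $\pi(c)$ must divide $e$.

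The heart of the argument is to analyze the kernel $K = \ker \pi$, which consists of the residues congruent to $1$ modulo $p$. Since $\pi$ is surjective with image $(\mathbb{Z}_{p})^\times$ of order $p-1$, the kernel has order $|K| = \varphi(p^r)/(p-1) = p^{r-1}$, so $K$ is a $p$-group. Because $\pi(c)$ has order $d$, the power $c^{d}$ reduces to $1$ modulo $p$ and hence lies in $K$. As every element of a group of order $p^{r-1}$ has order dividing $p^{r-1}$, it follows that ${\rm ord}_{p^r}(c^{d})$ is a power of $p$, say $p^{j}$ with $0 \le j \le r-1$.

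It then remains to assemble these facts into the identity $e = d\,p^{j}$. On one hand, $(c^{d})^{p^{j}} = c^{d p^{j}} \equiv 1 \pmod{p^r}$ gives $e \mid d p^{j}$. On the other hand, writing $e = d m$ (possible since $d \mid e$) and reading $c^{e} = (c^{d})^{m} \equiv 1 \pmod{p^r}$ shows that $p^{j} = {\rm ord}_{p^r}(c^{d})$ divides $m$, whence $d p^{j} \mid d m = e$. Combining the two divisibilities yields $e = d p^{j}$, so one takes $i = j$.

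I expect the main obstacle to be the structural input in the second paragraph: one must be certain that $K$ is genuinely a $p$-group of order $p^{r-1}$, so that the order of any of its elements is forced to be a power of $p$. This rests on the surjectivity of $\pi$ and the order count $|K| = p^{r-1}$, and it is the only place where the precise order $\varphi(p^r) = p^{r-1}(p-1)$ for an odd prime enters; the remaining steps are elementary manipulations of orders. It is worth noting that this proposition uses no assumption that $p^{r}$ be good --- it is a purely structural fact about multiplicative orders modulo odd prime powers, which is precisely why it serves as a general tool alongside Proposition~\ref{2order}.
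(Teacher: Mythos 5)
Your proof is correct and complete. The paper itself gives no proof of this proposition --- it is quoted directly from Moree's paper \cite{M1997} as part of this survey's stated policy of omitting detailed proofs --- so there is nothing in the text to compare against; but your argument (reduce $d\mid e$ from the projection $\pi$, observe that $c^{d}$ lies in the kernel of $\pi$, which is a $p$-group of order $p^{r-1}$, and then sandwich $e$ between the two divisibilities $e\mid dp^{j}$ and $dp^{j}\mid e$) is the standard one and every step checks out. Your closing observation is also accurate: the statement is purely structural and does not use goodness of $p^{r}$, and in fact the same kernel argument would work verbatim for $p=2$, so the restriction to odd primes in the statement is inherited from the surrounding context rather than forced by the proof.
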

From the propositions above, it follows that both 
$  \mathrm{ord}_{p^r}\!\left(\tfrac{a}{b}\right)  $ and $ \mathrm{ord}_{p}\!\left(\tfrac{a}{b}\right)$
are even for every odd prime power $p^r \in G_{(a,b)}$. 
Furthermore, by Proposition~\ref{2order}, one obtains  
\[
   \left(a b^{-1}\right)^{\frac{\mathrm{ord}_{p^r}(\tfrac{a}{b})}{2}} \equiv -1 \pmod{p^r},
 \quad \text{ and hence, }\quad
   p^r \;\bigm|\; \left( a^{\tfrac{\mathrm{ord}_{p^r}(\tfrac{a}{b})}{2}} + b^{\tfrac{\mathrm{ord}_{p^r}(\tfrac{a}{b})}{2}} \right).
\]

By examining the prime divisors of an odd integer $d>1$, we arrive at the following theorem, which provides a complete  number-theoretic characterization of good odd integers.

\begin{theorem}
	[{\cite[Theorem 1]{M1997}}]\label{goodP} Let $d$ be an odd integer such that $d>1$. Then $d\in G_{(a,b)}$   if and only if there exists a positive integer $s$ such that $2^s|| {\rm ord}_{p}(\frac{a}{b})$ for every prime $p$ dividing~$d$. 
\end{theorem}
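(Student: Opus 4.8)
The plan is to translate divisibility of $a^{k}+b^{k}$ into a statement about the multiplicative order of $g := ab^{-1}$ and then to analyze when the resulting family of congruences admits a common solution $k$. By Lemma~\ref{gcd-1-ab}, any $d \in G_{(a,b)}$ is coprime to both $a$ and $b$, so $b$ is invertible modulo $d$ and the condition $d \mid a^{k}+b^{k}$ is equivalent to $g^{k} \equiv -1 \pmod{d}$. Writing $d = \prod_{i} p_i^{r_i}$ with all $p_i$ odd, the Chinese Remainder Theorem shows that $g^{k} \equiv -1 \pmod{d}$ holds iff $g^{k} \equiv -1 \pmod{p_i^{r_i}}$ for every $i$. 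Thus $d$ is good iff a single exponent $k$ solves all of these prime-power congruences simultaneously.

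First I would settle the prime-power case. For each $i$ set $n_i := \mathrm{ord}_{p_i^{r_i}}(g)$. Since $(\mathbb{Z}_{p_i^{r_i}})^{\times}$ is cyclic for odd $p_i$, the element $-1$ is its unique element of order $2$, so $g^{k} \equiv -1 \pmod{p_i^{r_i}}$ is solvable iff $n_i$ is even, in which case (by Proposition~\ref{2order} and the subsequent remark) $g^{n_i/2} \equiv -1$ and the full solution set is $\{\, k : k \equiv n_i/2 \pmod{n_i} \,\}$. Moreover, Proposition~\ref{ord} gives $n_i = \mathrm{ord}_{p_i}(g)\,p_i^{\,j}$ for some $j \ge 0$ with $p_i$ odd, hence $v_2(n_i) = v_2(\mathrm{ord}_{p_i}(g)) =: t_i$, where $v_2$ denotes the $2$-adic valuation. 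Writing $n_i = 2^{t_i} m_i$ with $m_i$ odd, the congruence $k \equiv n_i/2 \pmod{n_i}$ splits, via CRT on the coprime factors $2^{t_i}$ and $m_i$, into $k \equiv 2^{t_i-1} \pmod{2^{t_i}}$ together with $k \equiv 0 \pmod{m_i}$; the first of these is equivalent to the clean statement $v_2(k) = t_i - 1$.

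The crux, and the step I expect to be the main obstacle, is the simultaneous-solvability analysis. The odd-part conditions $k \equiv 0 \pmod{m_i}$ are mutually compatible for any choice of the $m_i$ (any common multiple works), so the entire system is solvable iff the $2$-part conditions are compatible; that is, iff the single quantity $v_2(k)$ can equal $t_i - 1$ for every $i$, which forces all the $t_i$ to share one common value $s$. For the forward direction, the existence of a good $k$ yields the fixed valuation $v_2(k)+1 = s = t_i = v_2(\mathrm{ord}_{p_i}(g))$ for all $i$, i.e. $2^{s} \,\|\, \mathrm{ord}_{p_i}(a/b)$ for every $p_i \mid d$, with $s \ge 1$ since each $n_i$ is even. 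Conversely, if $2^{s} \,\|\, \mathrm{ord}_{p}(a/b)$ for all $p \mid d$, then taking $k = 2^{s-1}\,\mathrm{lcm}(m_1,\dots,m_t)$ gives $v_2(k) = s-1$ and $m_i \mid k$, so $k$ solves every prime-power congruence and hence $d \mid a^{k}+b^{k}$. The care needed is precisely in isolating the $2$-adic obstruction from the harmless odd-part conditions and in invoking Proposition~\ref{ord} to transfer the valuation from $p_i^{r_i}$ down to $p_i$.
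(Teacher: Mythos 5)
Your argument is correct and is essentially the classical one that this survey's cited Propositions~\ref{2order} and \ref{ord} are set up to support: the paper itself omits the proof (deferring to \cite{M1997}), but the intended route is exactly your reduction via CRT to the prime-power congruences $g^{k}\equiv -1 \pmod{p_i^{r_i}}$, solvable precisely when $k\equiv n_i/2 \pmod{n_i}$, followed by the observation that the only obstruction to a simultaneous solution is the $2$-adic valuation of the orders, which Proposition~\ref{ord} lets you read off at the prime level. Your isolation of the condition $v_2(k)=t_i-1$ and the explicit witness $k=2^{s-1}\,\mathrm{lcm}(m_1,\dots,m_t)$ for the converse are exactly right.
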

Theorem \ref{goodP} and Proposition \ref{2order} ensure that the order   ${\rm ord}_{d}(\frac{a}{b})$ is even  and  $({a}{b}^{-1})^\frac{{\rm ord}_{d}(\frac{a}{b})}{2}\equiv -1 \,({\rm mod}\, d)$  for all  odd integers $d\in G_{(a,b)}$.  Using these properties,   it can be obtained that 
\[d\big\vert \left(a^\frac{{\rm ord}_{d}(\frac{a}{b})}{2} +b^\frac{{\rm ord}_{d}(\frac{a}{b})}{2} \right).\]

\begin{example} \label{ex1} An illustrative  identification  of  good odd integers with respect to $a=11$ and $b=12$ is given as follows. 
 \begin{enumerate}
     \item Let $d= 1625= 5^3\cdot 13$.  Then $b^{-1} \equiv 948 \pmod d$  which implies that ${\rm ord}_{5}(\frac{a}{b}) =4$ and   ${\rm ord}_{13}(\frac{a}{b})=12$.  It follows that  $2^2||{\rm ord}_{5}(\frac{a}{b}) $ and   $2^2||{\rm ord}_{13}(\frac{a}{b})$ which mean $d\in G_{(a,b)}$. Since ${\rm ord}_{d}(\frac{a}{b}) =300$, we have $d|(a^{150}+b^{150})$.

     \item Let $d= 6125= 5^3\cdot 7^2$.  Then $b^{-1} \equiv 3573 \pmod d$  which implies that ${\rm ord}_{5}(\frac{a}{b}) =4$ and   ${\rm ord}_{7}(\frac{a}{b})=6$.  It follows that  $2^2||{\rm ord}_{5}(\frac{a}{b}) $ but    $2^1||{\rm ord}_{13}(\frac{a}{b})$.  Hence,  $d\notin G_{(a,b)}$.

     \item  Let $d= 3875= 5^3\cdot 31$.  Then $b^{-1} \equiv 323 \pmod d$  which implies that ${\rm ord}_{5}(\frac{a}{b}) =4$ and   ${\rm ord}_{31}(\frac{a}{b})=15$.  It follows that  $2^2||{\rm ord}_{5}(\frac{a}{b}) $ but    $2^0||{\rm ord}_{31}(\frac{a}{b})$.  Therefore, we have  $d\notin G_{(a,b)}$.  
 \end{enumerate} 
 \end{example}

\subsection{Good Even Integers}
\label{subsec2.2}
 In \cite{M1997}, the characterization of the  existence of an  good even integer has   been briefly explained. Precisely, a good even integer exists if and only if $ab$ is odd, which is now formally established by Lemma~\ref{gcd-1-ab}. The further characterization and properties of good even integers was  established  in \cite{J2018a}, although some minor errors were identified in the statements of \cite[Proposition 2.1 and Proposition 2.3]{J2018a}. The necessary corrections were later provided in \cite{J2018b} together with there proof in \cite{JPR2020}.

\begin{proposition}[{\cite[Proposition 2.3]{JPR2020}}] \label{evengood}  
	Let $a$ and $b$   be  coprime odd  integers  and  let $\beta$ be a positive integer. Then the following statements are equivalent.
	\begin{enumerate}[$i)$]
		\item
		$2^\beta\in G_{(a,b)}$.
		\item      $2^\beta|(a+b)$.
		\item  $ab^{-1} \equiv -1 ~{\rm mod}~ 2^\beta$. 
	\end{enumerate}
\end{proposition}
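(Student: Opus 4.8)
The equivalence $ii) \Leftrightarrow iii)$ is immediate: since $a$ and $b$ are odd, $b$ is a unit modulo $2^\beta$, so multiplying the congruence $a+b \equiv 0 \pmod{2^\beta}$ by $b^{-1}$ (and multiplying back) shows that $2^\beta \mid (a+b)$ if and only if $ab^{-1} \equiv -1 \pmod{2^\beta}$. Likewise $ii) \Rightarrow i)$ is trivial, by taking $k=1$ in the definition of $G_{(a,b)}$. Thus the entire content of the proposition lies in the implication $i) \Rightarrow ii)$: the plan is to show that if $2^\beta \mid (a^k+b^k)$ for \emph{some} positive integer $k$, then already $2^\beta \mid (a+b)$.

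The strategy for $i) \Rightarrow ii)$ is to pin down the exact power of $2$ dividing $a^k+b^k$ by splitting on the parity of $k$. When $\beta = 1$ the claim is vacuous, since $a+b$ is a sum of two odd numbers and hence even, so I may assume $\beta \ge 2$. First I would dispose of the case $k$ even: writing $k = 2j$, both $a^k = (a^j)^2$ and $b^k = (b^j)^2$ are squares of odd integers, hence each is $\equiv 1 \pmod 8$, so $a^k + b^k \equiv 2 \pmod 8$. This forces $2^1 \,\|\, (a^k+b^k)$, contradicting $2^\beta \mid (a^k+b^k)$ with $\beta \ge 2$; hence $k$ must be odd.

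For $k$ odd, the key step is the factorization $a^k + b^k = (a+b)\big(a^{k-1} - a^{k-2}b + \cdots + b^{k-1}\big)$. The second factor is a sum of $k$ terms, each a product of odd numbers and therefore odd; since $k$ is odd, this sum of an odd number of odd terms is itself odd. Consequently the largest power of $2$ dividing $a^k+b^k$ equals the largest power of $2$ dividing $a+b$. Therefore $2^\beta \mid (a^k+b^k)$ yields $2^\beta \mid (a+b)$, which is exactly $ii)$ and, together with $ii) \Leftrightarrow iii)$ and $ii) \Rightarrow i)$, closes the cycle.

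The only delicate point, and the step I would treat most carefully, is the behaviour of $2$-adic divisibility, since the prime $2$ is exceptional for lifting-the-exponent type arguments. The even-$k$ case genuinely needs the refinement modulo $8$ rather than modulo $4$, because a sum of two odd squares is already $\equiv 2 \pmod 4$ and one must still exclude higher powers of $2$; the congruence of odd squares to $1$ modulo $8$ settles this. An alternative, more structural route is to set $x = ab^{-1}$ in $\mathbb{Z}_{2^\beta}^\times$ and use the decomposition $\mathbb{Z}_{2^\beta}^\times = \langle -1\rangle \times \langle 5\rangle$ for $\beta \ge 3$, under which $x^k \equiv -1$ forces $x \equiv -1$: the sign component must be nontrivial with $k$ odd, while the $\langle 5\rangle$-component of $x$ is annihilated by the odd exponent $k$, with the small cases $\beta \in \{1,2\}$ checked directly. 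I would present the valuation argument as the main proof and mention the group-theoretic viewpoint only as a remark.
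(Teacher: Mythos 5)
Your proof is correct and follows essentially the same route as the paper's: dispose of $\beta=1$ trivially, rule out even $k$ by a congruence argument, and for odd $k$ use the factorization $a^k+b^k=(a+b)\sum_{i=0}^{k-1}(-1)^i a^{k-1-i}b^i$ together with the observation that the second factor is odd. One minor quibble: the paper works modulo $4$ rather than modulo $8$ in the even-$k$ case, and that already suffices, since $a^k+b^k\equiv 2\pmod 4$ forces the $2$-adic valuation to equal exactly $1$; so your remark that the refinement modulo $8$ is \emph{genuinely needed} is not accurate, although your modulo-$8$ argument is of course still valid.
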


\begin{proposition}[{\cite[Proposition 2.2]{J2018a}}]   \label{prop2d}
	Let $a$, $b$  and $d>1$ be  pairwise coprime odd     integers.  Then    $d\in G_{(a,b)}$
	if and only if        $2d\in G_{(a,b)}$. 
	In this case, 	   ${\rm ord}_{2d}(\frac{a}{b})= {\rm ord}_{d}(\frac{a}{b})$ is even. 
\end{proposition}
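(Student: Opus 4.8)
The plan is to treat the two implications separately and then read off the order. The reverse implication $2d\in G_{(a,b)}\Rightarrow d\in G_{(a,b)}$ is immediate from the definition: if $2d\mid(a^k+b^k)$ for some $k\geq 1$, then the divisor $d$ of $2d$ also divides $a^k+b^k$, so $d\in G_{(a,b)}$. (Indeed, any divisor of a good integer is good.) This costs essentially nothing and can be dispatched in one line.

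For the forward implication I would lean on the characterization of good odd integers already recorded. Since $d>1$ is odd and $d\in G_{(a,b)}$, the discussion immediately following Theorem~\ref{goodP} guarantees that ${\rm ord}_{d}(\frac{a}{b})$ is even, that $m:=\frac{{\rm ord}_{d}(\frac{a}{b})}{2}$ is a positive integer, and that $d\mid\big(a^{m}+b^{m}\big)$. The one genuinely new input is a parity observation: because $a$ and $b$ are both odd, $a^{m}$ and $b^{m}$ are odd for every $m\geq 1$, so $a^{m}+b^{m}$ is even, i.e.\ $2\mid\big(a^{m}+b^{m}\big)$. As $\gcd(2,d)=1$, the two divisibilities combine (via the coprimality, equivalently by taking the least common multiple) to yield $2d\mid\big(a^{m}+b^{m}\big)$. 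Hence $2d\in G_{(a,b)}$, witnessed by the same exponent $m$.

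For the order equality I would argue straight from the definition of multiplicative order. For any $t\geq 1$, the congruence $(ab^{-1})^{t}\equiv 1\pmod{2d}$ is equivalent to $2d\mid\big(a^{t}-b^{t}\big)$, which, since $\gcd(2,d)=1$, holds if and only if both $2\mid\big(a^{t}-b^{t}\big)$ and $d\mid\big(a^{t}-b^{t}\big)$. The first condition is automatic because $a$ and $b$ are odd, so it imposes no constraint, and the requirement collapses to $(ab^{-1})^{t}\equiv 1\pmod{d}$. Consequently the least valid $t$ modulo $2d$ coincides with the least modulo $d$, giving ${\rm ord}_{2d}(\frac{a}{b})={\rm ord}_{d}(\frac{a}{b})$, which is even by the remark after Theorem~\ref{goodP}. (Equivalently, one may invoke the Chinese Remainder isomorphism $\mathbb{Z}_{2d}^{\times}\cong\mathbb{Z}_{2}^{\times}\times\mathbb{Z}_{d}^{\times}$ and note that $\mathbb{Z}_{2}^{\times}$ is trivial, so the order modulo $2d$ equals the order modulo $d$.)

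I do not anticipate a deep obstacle here; the proof is short once the right tools are cited. The only point demanding care is to invoke the precise consequence of Theorem~\ref{goodP} — namely that ${\rm ord}_{d}(\frac{a}{b})$ is even and that $d$ divides $a^{m}+b^{m}$ at the half-exponent $m$ — rather than merely the existence of some good exponent, and then to apply the parity argument at exactly that exponent $m$ so that the factor $2$ can be attached without disturbing the divisibility by $d$.
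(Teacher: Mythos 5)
Your proof is correct. The paper itself gives no proof of this proposition (it defers to the cited reference and only proves the two corrected statements, Propositions~\ref{evengood} and \ref{prop2m}), so there is nothing to compare against directly; but your argument is exactly the natural one and is consistent with the remark the paper places immediately after the statement. One small simplification: for the forward implication you do not actually need the half-order exponent $m$ from Theorem~\ref{goodP} --- if $d\mid(a^k+b^k)$ for \emph{any} $k$, then $a^k+b^k$ is already even because $a$ and $b$ are odd, and $\gcd(2,d)=1$ gives $2d\mid(a^k+b^k)$ at that same $k$; the appeal to Theorem~\ref{goodP} is only needed for the final clause that ${\rm ord}_d(\frac{a}{b})$ is even, where your order computation via the trivial group $\mathbb{Z}_2^{\times}$ is clean and correct.
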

From Proposition \ref{prop2d},  it can be deduced further that
$({a}{b}^{-1})^\frac{{\rm ord}_{d}(\frac{a}{b})}{2}\equiv -1 \,({\rm mod}\, d)$  and  $({a}{b}^{-1})^\frac{{\rm ord}_{d}(\frac{a}{b})}{2}\equiv -1 \,({\rm mod}\, 2d)$   for all   odd integers $d\in G_{(a,b)}$. We conclude the following divisibility: 
\[d\big\vert \left(a^\frac{{\rm ord}_{d}(\frac{a}{b})}{2} +b^\frac{{\rm ord}_{d}(\frac{a}{b})}{2} \right) \text{ and } 2d\big\vert \left(a^\frac{{\rm ord}_{d}(\frac{a}{b})}{2} +b^\frac{{\rm ord}_{d}(\frac{a}{b})}{2} \right).\]

\begin{proposition}[{\cite[Proposition 2.7]{JPR2020}}]  \label{prop2m} Let $a,b$ and $d>1$   be pairwise coprime odd positive integers  and let  $\beta\geq 2$ be  an  integer. Then    $2^\beta d\in G_{(a,b)}$ if and only if   $2^\beta|(a+b)$
	and  $d\in G_{(a,b)}$  is such that  $2|| {\rm ord}_{d}(\frac{a}{b})$.  
	In this case,    ${\rm ord}_{2^\beta}(\frac{a}{b})=2$  and   $2|| {\rm ord}_{2^\beta d}(\frac{a}{b})$.
\end{proposition}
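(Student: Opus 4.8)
The plan is to translate goodness into the language of the element $ab^{-1}$ and then separate the prime $2$ from the odd part $d$ via the Chinese Remainder Theorem. Throughout, write $g=ab^{-1}$, which is well defined modulo $2^\beta$, modulo $d$, and modulo $2^\beta d$ by Lemma~\ref{gcd-1-ab}, and recall that a modulus $N$ lies in $G_{(a,b)}$ precisely when $g^{k}\equiv -1\pmod N$ for some positive integer $k$. Since every divisor of a good integer is again good, $2^\beta d\in G_{(a,b)}$ forces both $2^\beta\in G_{(a,b)}$ and $d\in G_{(a,b)}$; Proposition~\ref{evengood} then identifies the first condition with $2^\beta\mid(a+b)$, equivalently $g\equiv -1\pmod{2^\beta}$, and gives ${\rm ord}_{2^\beta}(\frac{a}{b})=2$. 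This already supplies two of the three assertions in the forward direction, so the heart of the matter is the interplay between the parity constraint coming from the modulus $2^\beta$ and the order condition coming from $d$.

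For the forward direction, I would fix $k$ with $g^{k}\equiv -1\pmod{2^\beta d}$ and reduce modulo each factor. Modulo $2^\beta$ we have $g\equiv -1$, so $(-1)^{k}\equiv -1$ forces $k$ to be odd. Modulo $d$, writing $m={\rm ord}_d(\frac{a}{b})$, which is even since $d\in G_{(a,b)}$ by Theorem~\ref{goodP} and Proposition~\ref{2order}, the relation $g^{k}\equiv -1\pmod d$ holds exactly when $k\equiv m/2\pmod m$. The key observation is then purely arithmetic: if $4\mid m$ then $m/2$ is even and every $k$ in the residue class $m/2\pmod m$ is even, contradicting the oddness of $k$; hence $2\,\|\,m$. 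This is the step I expect to be the crux, since it is where the two moduli are forced to be compatible through a single exponent.

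For the converse, assume $2^\beta\mid(a+b)$ and $d\in G_{(a,b)}$ with $2\,\|\,m$. I would exhibit one exponent serving both factors, namely $k=m/2$, which is odd precisely because $2\,\|\,m$. Then $g^{m/2}\equiv(-1)^{m/2}\equiv -1\pmod{2^\beta}$ as $m/2$ is odd, while $g^{m/2}\equiv -1\pmod d$ by the remark following Theorem~\ref{goodP} applied to $d\in G_{(a,b)}$. Combining through the Chinese Remainder Theorem yields $g^{m/2}\equiv -1\pmod{2^\beta d}$, that is $2^\beta d\mid\bigl(a^{m/2}+b^{m/2}\bigr)$, so $2^\beta d\in G_{(a,b)}$.

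Finally, for the order assertions in the stated case, I would compute directly. As noted, ${\rm ord}_{2^\beta}(\frac{a}{b})={\rm ord}_{2^\beta}(-1)=2$, since $\beta\ge 2$ guarantees $-1\not\equiv 1\pmod{2^\beta}$. Passing to the full modulus through the isomorphism $\mathbb{Z}_{2^\beta d}^{\times}\cong\mathbb{Z}_{2^\beta}^{\times}\times\mathbb{Z}_{d}^{\times}$ gives ${\rm ord}_{2^\beta d}(\frac{a}{b})={\rm lcm}\bigl(2,m\bigr)=m$, where the last equality uses $2\mid m$; since $2\,\|\,m$, this shows $2\,\|\,{\rm ord}_{2^\beta d}(\frac{a}{b})$, completing the argument.
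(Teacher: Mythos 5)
Your proof is correct and follows essentially the same route as the paper's: both split the modulus into its $2$-part and odd part, use Proposition~\ref{evengood} to get $ab^{-1}\equiv -1\pmod{2^\beta}$ (hence the oddness of the exponent $k$ since $\beta\ge 2$), and tie that parity to $2\,\|\,\mathrm{ord}_d(\frac{a}{b})$, with the converse in both cases exhibiting the single exponent $k=\mathrm{ord}_d(\frac{a}{b})/2$. The only difference is presentational: your forward direction characterizes all solutions of $(ab^{-1})^k\equiv-1\pmod d$ as the residue class $m/2\pmod m$ and reads off the parity, whereas the paper runs a minimality argument comparing the least exponents for $2^\beta d$ and for $d$; the underlying arithmetic is identical.
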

  
 From Proposition \ref{prop2m},  it can be deduced   that
$({a}{b}^{-1})^\frac{{\rm ord}_{2^\beta d}(\frac{a}{b})}{2}\equiv -1 \,({\rm mod}\, 2^\beta d)$   for all     integers $2^\beta d\in G_{(a,b)}$. This  implies that 
\[2^\beta d \big\vert \left(a^\frac{{\rm ord}_{2^\beta d}(\frac{a}{b})}{2} +b^\frac{{\rm ord}_{2^\beta d}(\frac{a}{b})}{2} \right)  .\]

The results can be obtained immediately from Propositions~\ref{evengood}--\ref{prop2m}. 
Alternatively, it may be viewed as a corrected and refined version of {\cite[Theorem~2.1]{J2018a}}.

\begin{theorem} \label{thm2} Let $a$ and $b$    be     coprime  nonzero   integers and let $\ell=2^\beta d$ be a positive integer such that $d$ is odd and $\beta \ge 0$.  Then one of the following statements holds.
     \begin{enumerate}[$1)$]
         \item  If  $ab$ is odd, then $\ell=2^\beta d\in G_{(a,b)}$  if and only if  one of the following statements holds.
         \begin{enumerate}[$(a)$]
             \item $\beta \in\{0,1\}$ and  $d=1$.
             \item $\beta \in \{0,1\}$, $d\geq 3$   and  
             there exists $s\geq 1$ such that $2^s|| {\rm ord}_{p}(\frac{a}{b})$ for every prime $p$ dividing $d$. 
             \item {$\beta \ge 2$, $d=1$  and    $ 2^\beta |(a+b)$.}
             \item $\beta \ge  2$, $d\geq 3$,     $ 2^\beta |(a+b)$  and  $ d\in G_{(a,b)}$ is such that $2|| {\rm ord}_{d}(\frac{a}{b})$.   
         \end{enumerate}
         \item  If $ab$ is even,  then $\ell=2^\beta d\in G_{(a,b)}$  if and only if    one of the following statements holds.
         \begin{enumerate}[$(a)$]
             \item $\beta =0$   and $d=1$.
             \item  $\beta =0$, $d\geq 3$,   and 
             there exists $s\geq 1$ such that $2^s|| {\rm ord}_{p}(\frac{a}{b})$ for every prime $p$ dividing $d$. 
         \end{enumerate}
     \end{enumerate}
 \end{theorem}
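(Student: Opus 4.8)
The plan is to treat the statement as a case analysis driven by the parity of $ab$ and by the $2$-adic valuation $\beta$ of $\ell$, assembling the already-established Propositions \ref{evengood}, \ref{prop2d}, and \ref{prop2m} together with the odd-integer characterization in Theorem \ref{goodP}. First I would invoke Lemma \ref{gcd-1-ab}: any $\ell\in G_{(a,b)}$ is coprime to both $a$ and $b$. When $ab$ is even, $2|ab$, forcing $\ell$ to be odd and hence $\beta=0$. This immediately reduces Case~2 to the odd situation $\ell=d$, where $d=1$ is good by definition (giving (a)) and $d\geq 3$ is handled verbatim by Theorem \ref{goodP} (giving (b)). Thus Case~2 requires essentially no new work beyond recording the parity constraint $\beta=0$.

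The substance lies in Case~1, where $a$ and $b$ are both odd and $\beta$ may be arbitrary. I would split on the value of $\beta$. For $\beta=0$ the integer $\ell=d$ is odd, and Theorem \ref{goodP} (with the trivial case $d=1$) yields exactly (a) and (b). For $\beta=1$ I would distinguish $d=1$, where $\ell=2$ and $a+b$ is even so that $2|(a+b)$ makes $2\in G_{(a,b)}$ by Proposition \ref{evengood} (giving (a)), from $d\geq 3$, where Proposition \ref{prop2d} shows that $2d\in G_{(a,b)}$ if and only if $d\in G_{(a,b)}$, after which Theorem \ref{goodP} re-expresses this membership as the stated $2^s$-condition (giving (b)). In this way the two values $\beta\in\{0,1\}$ collapse into the same pair of conditions.

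For $\beta\geq 2$ I would again separate $d=1$ from $d\geq 3$. When $d=1$, we have $\ell=2^\beta$ and Proposition \ref{evengood} gives membership if and only if $2^\beta|(a+b)$, which is precisely (c). When $d\geq 3$, Proposition \ref{prop2m} directly characterizes $2^\beta d\in G_{(a,b)}$ through the two conditions $2^\beta|(a+b)$ and $d\in G_{(a,b)}$ with $2||{\rm ord}_{d}(\frac{a}{b})$, which is precisely (d). Collecting the four subcases of Case~1 and the two subcases of Case~2 completes the equivalence in both directions.

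The proof contains no genuinely hard step; the main thing to watch is the bookkeeping at the degenerate boundary $d=1$, since Propositions \ref{prop2d} and \ref{prop2m} are stated only for $d>1$ and therefore cannot be applied there. The $d=1$ cases must instead be obtained from Proposition \ref{evengood}, or directly from the definition when $\ell\in\{1,2\}$. The value $\beta=1$ also deserves care, as it sits at the interface between the purely odd regime and the even regime, where one must confirm that $2|(a+b)$ holds automatically because $a$ and $b$ are both odd. Once these boundary checks are settled, every remaining subcase reduces to a direct citation of one of the preceding results.
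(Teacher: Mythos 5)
Your proposal is correct and follows essentially the same route the paper indicates: the paper itself states that Theorem \ref{thm2} is derived directly from Propositions \ref{evengood}--\ref{prop2m} together with Theorem \ref{goodP}, which is exactly the case analysis you carry out. Your explicit attention to the degenerate boundaries $d=1$ and $\beta=1$ (where Propositions \ref{prop2d} and \ref{prop2m} do not apply and one must fall back on Proposition \ref{evengood} or the definition) is the right bookkeeping and matches the intended derivation.
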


\begin{example}    \label{ex2}
 
An illustrative identification of good even integers with respect to $a=5$ and $b=7$ is given as follows.  
Let $d = 1573 = 11^2 \cdot 13$. Then $b^{-1} \equiv 899 \pmod d$ which implies that 
${\rm ord}_{11}\!\left(\frac{a}{b}\right) = 10$ and 
${\rm ord}_{13}\!\left(\frac{a}{b}\right) = 6$. 
It follows that $2^1 \Vert {\rm ord}_{11}\!\left(\frac{a}{b}\right)$ and 
$2^1 \Vert {\rm ord}_{13}\!\left(\frac{a}{b}\right)$, which means 
$d \in G_{(a,b)}$ by Theorem~\ref{goodP}.

\begin{enumerate}
    \item Since ${\rm ord}_d\!\left(\frac{a}{b}\right) = 330$, we have 
    $d |\left(a^{165} + b^{165}\right)$. By Proposition~\ref{prop2d}, 
    $2d = 3146 \in G_{(a,b)}$ and $2d |\left(a^{165} + b^{165}\right)$.

    \item Since $2^2 |(a+b)$ and $d \in G_{(a,b)}$ with 
    $2^1 \Vert {\rm ord}_d\!\left(\frac{a}{b}\right)$, it follows from 
    Proposition~\ref{prop2m} that $2^2 d =6292 \in G_{(a,b)}$. 
    Moreover, since ${\rm ord}_{2^2 d}\!\left(\frac{a}{b}\right) = 330$, we obtain 
    $2^2 d |\left(a^{165} + b^{165}\right)$.

    \item Since $2^\beta \nmid (a+b)$ for all $\beta \geq 3$, we conclude from 
    Proposition~\ref{prop2m} that $2^\beta d \notin G_{(a,b)}$ for all $\beta \geq 3$.
\end{enumerate}

\end{example}

 \subsection{Useful Algorithm and Diagrams}

From the previous subsections,  the following diagrams provide an illustrative view for the characterizations of good integers.  The result in blue  is recalled  from \cite{M1997}  (see Theorem \ref{goodP}) and the others  summarized from those in   \cite{J2018b} and \cite{J2018a}.  Based on Lemma \ref{odd-good2}, the diagrams are presented separately for the case where $ab$ is odd  in  Figure \ref{Fig1} and where $ab$ is even in Figure \ref{Fig2}.  To save space,  we write $G$ instead of $G_{(a,b)}$.
 
	\begin{figure}[!hbt]
\begin{tikzpicture} [ 
block/.style    = { rectangle, draw=black, 
    fill=black!5, text width=11em, text centered,
    rounded corners, minimum height=2em },
line/.style     = { draw, thick, ->, shorten >=2pt }
]
\matrix [column sep=2mm, row sep=12 mm] {
    & \node [block] (ell) {$\ell$ };            && \\ %
    \node [block] (ell1) {$1,2\in G$ };&&  &\node [block] (ell6) {$2^\beta d\in G$ iff $2^\beta,d\in G$ and $2||$ord$_d(ab^{-1}$) }; \\
   \node [block] (ell3) {$2^\beta \in G$ iff $2^\beta|(a+b) $}; &  \node [block] (ell4) {\color{blue} $d\in G$ iff $\exists s\geq 1,   \forall p|d$, $2^s||$ord$_p(ab^{-1}$) }; &
    \node [block] (ell5) {$2d\in G$ iff  $d\in G$ };  &\\
};
\begin{scope} [every path/.style=line]
\path (ell)        --   node{\quad \quad $1,2$}   (ell1);  
\path (ell)        --   node{\quad\quad \quad   $2^\beta, ~\beta\geq 2$}   (ell3);  
\path (ell)        --   node{\quad \quad \quad \quad $d$ is odd.}   (ell4);  
\path (ell)        --   node{\quad \quad \quad \quad\quad $2d,~d$ is odd.}   (ell5);  
\path (ell)        --   node{\quad\quad \quad \quad\quad \quad \quad $2^\beta d,~d$ is odd and $\beta\geq 2$.}   (ell6);  
\end{scope}
%
\end{tikzpicture}

\caption{Characterization of good integers with respect to coprime  integers   $a$ and $b$, where  $ab$ is odd.} \label{Fig1}
\end{figure}
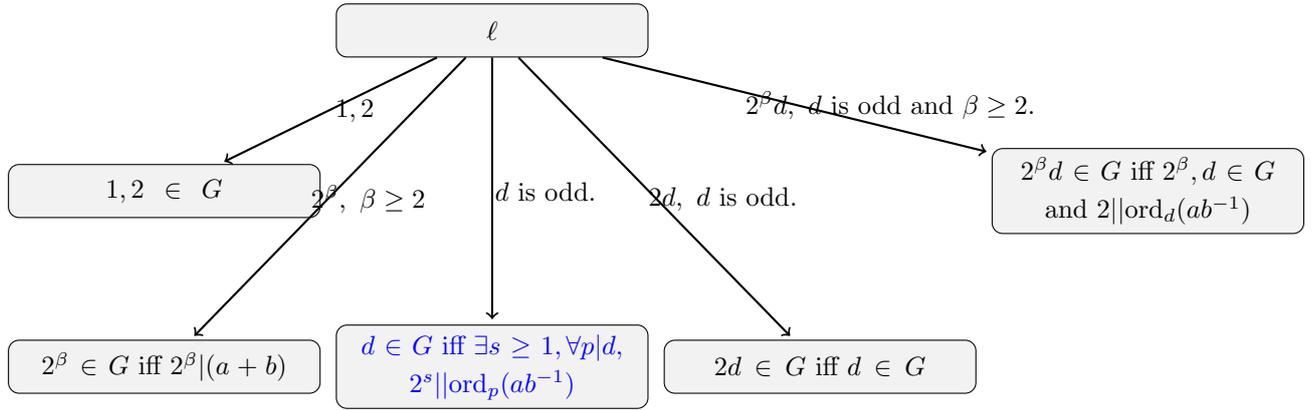

 \begin{figure}[!hbt]
\begin{tikzpicture} [
block/.style    = { rectangle, draw=black, 
    fill=black!5, text width=11em, text centered,
    rounded corners, minimum height=2em },
line/.style     = { draw, thick, ->, shorten >=2pt },
]
\matrix [column sep=2mm, row sep=12  mm] {
    
    & \node [block] (ell) {$\ell$ };            && \\ %
    
    \node [block] (ell1) {$1$ is $G$ };&&  &\node [block] (ell6)  {$2^\beta d $ is bad };\\
    
    \node [block] (ell2)  {$2$ is bad};&  \node [block] (ell3)  {$2^\beta  $ is bad}; &\node [block] (ell4) {\color{blue} $d\in G$ iff $\exists s\geq 1, \forall p|d$, $2^s||$ord$_p(ab^{-1}$) };&\node [block] (ell5)  {$2d\ $  is bad};  \\
};

\begin{scope} [every path/.style=line]
\path (ell)        --   node{\quad \quad $1$}   (ell1);  

\path (ell)        --   node{\quad\quad $2$}   (ell2);  

\path (ell)        --   node{\quad\quad \quad   $2^\beta, ~\beta\geq 2$}   (ell3);  

\path (ell)        --   node{\quad \quad \quad \quad $d$ is odd.}   (ell4);  

\path (ell)        --   node{\quad \quad \quad \quad\quad $2d,~d$ is odd.}   (ell5);  

\path (ell)        --   node{\quad\quad \quad \quad\quad \quad \quad $2^\beta d,~d$ is odd and $\beta\geq 2$.}   (ell6);

\end{scope}
%
\end{tikzpicture}
\caption{Characterization of good integers with respect to coprime  integers   $a$ and $b$, where  $ab$ is   even.} \label{Fig2}
\end{figure}
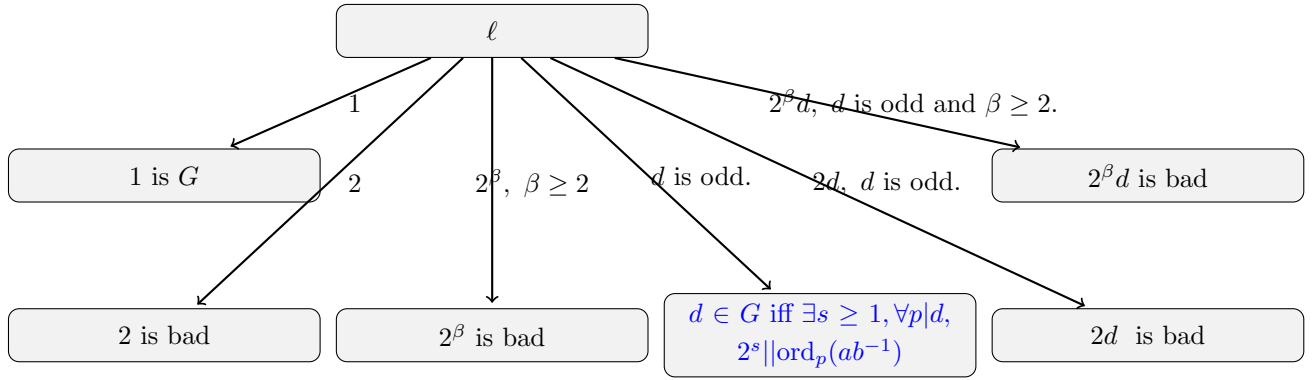 

Based on the characterizations presented in Subsections~\ref{subsec2.1}--\ref{subsec2.2}, we summarize Algorithm~\ref{Alg1}, which provides an effective method for identifying good integers. This algorithm translates the theoretical criteria into a practical procedure that facilitates computation and supports further applications in number theory and coding theory.

\begin{algorithm}
	\begin{algorithmic}
		\Require Integers \(a\), \(b\), and \(\ell\geq 1\) such that $\gcd(a,b)=1$
		\If{\(\gcd(a,\ell) \ne 1\) or \(\gcd(b,\ell) \ne 1\)}
		\Return \(\ell \notin G_{(a,b)}\)  
		\Else
		\State Write \(\ell = 2^\beta d\) where \(d\) is odd and \(\beta \geq 0\)
		\If{\(d = 1\)}
		\If{\(2^\beta \nmid (a + b)\)}
		\Return \(\ell \notin G_{(a,b)} \) 
		 
		\Else ~ \Return \(\ell \in G_{(a,b)} \)   
		\EndIf
		\Else
		\State Factor \(d = p_1^{r_1}p_2^{r_2}\dots p_t^{r_t}\)
		\State Compute \(\text{ord}_{p_i}(ab^{-1})\) for \(i = 1, \dots, t\)
		\State Set \(S = \{s_i : 2^{s_i} \Vert \text{ord}_{p_i}(ab^{-1})\}\)
		\If{\(\beta \geq 1\) and \(ab\) is even}
		\Return \(\ell \notin G_{(a,b)}\)  
		\ElsIf{\(\beta = 0\)}
		\If{\(S = \{0\}\) or \(|S| \geq 2\)}
		\Return \(\ell \notin G_{(a,b)}\)  
		\Else ~
		 \Return \(\ell \in EG_{(a,b)}\)  
		\EndIf
		\ElsIf{\(\beta = 1\) and \(ab\) is odd}
		\If{\(S = \{0\}\) or \(|S| \geq 2\)}
		\Return \(\ell \notin G_{(a,b)}\)  
		 
		\Else ~
		\Return \(\ell \in G_{(a,b)}\)  
		\EndIf 
		\Else
		\If{\(S = \{1\}\) and \(2^\beta |(a + b)\)}
		\Return \(\ell \in G_{(a,b)}\)  
		\Else ~
		\Return \(\ell \notin G_{(a,b)}\)  
		\EndIf
		\EndIf
		\EndIf
		\EndIf
	\end{algorithmic}
    	\caption{Identification of the good  integers     with respect to coprime  integers  \(a\) and \(b\)} \label{Alg1}
\end{algorithm}

Based on the procedure described in Algorithm~\ref{Alg1}, illustrative computational results for good integers less than $50$ in  $G_{(a,b)}$ with respect to $a,b\in \{1,2,3,4,5,6\}$ are presented in Table~\ref{T1}. This table demonstrates the algorithm  in identifying good integers and provides concrete examples that support the theoretical framework discussed earlier.

\begin{table}[!hbt]
    \centering
    \begin{tabular}{ccl}
    \hline
       $a$  & $b$  & Good integers in $G_{(a,b)}$\\
         \hline
$1$& $2$ &$ 1,
3 ,
5 ,
9 ,
11 ,
13 ,
17 ,
19 ,
25 ,
27 ,
29 ,
33 ,
37 ,
41 ,
43 , \dots$\\
$1$ & $3$ & $1,2,
4 ,
5 ,
7 ,
10 ,
14 ,
17 ,
19 ,
25 ,
28 ,
29 ,
31 ,
34 ,
37 ,
38 ,
41 ,
43 ,
49 ,
 \dots $\\
$1$ &$4$& $1,
5 ,
13 ,
17 ,
25 ,
29 ,
37 ,
41 , \dots$ \\
$1$& $5$& $1,2,
3 ,
6 ,
7 ,
9 ,
13 ,
14 ,
17 ,
18 ,
21 ,
23 ,
26 ,
27 ,
29 ,
34 ,
37 ,
41 ,
42 ,
43 ,
46 ,
47 ,
49 ,\dots$\\

$1$& $6$ & $1,
7 ,
11 ,
13 ,
17 ,
29 ,
31 ,
37 ,
41 ,
49 , \dots $\\
$2$ &$3$&$ 1,
5 ,
7 ,
11 ,
13 ,
17 ,
25 ,
31 ,
35 ,
37 ,
41 ,
49 , \dots$\\
$2$&$ 5$ &$1,
7 ,
11 ,
17 ,
19 ,
23 ,
29 ,
37 ,
41 ,
47 ,
49 , \dots$\\
$3$ & $4$& $1,
5 ,
7 ,
13 ,
17 ,
19 ,
25 ,
29 ,
31 ,
41 ,
43 ,
49 , \dots$\\
$3$& $5$ &$1,2,
4 ,
8 ,
13 ,
17 ,
19 ,
23 ,
26 ,
29 ,
31 ,
34 ,
37 ,
38 ,
41 ,
46 ,
47 , \dots$\\
$4$ &$5$ &$1,
3 ,
7 ,
9 ,
13 ,
17 ,
21 ,
23 ,
27 ,
37 ,
41 ,
43 ,
47 ,
49 , \dots$\\
$5$& $6$ & $1,
11 ,
17 ,
23 ,
31 ,
41 ,
43 ,
47 ,\dots$ \\
         \hline 
    \end{tabular}
    \caption{Good integers}\label{T1}
    \label{tab:placeholder}
\end{table}

\section{Oddly-Good and Evenly-Good Integers} \label{sec3}

Two notable subclasses of good integers, namely  the class of {evenly-good} integers and the class of {oddly-good} integers, which were first formalized in \cite{J2018a}.  The reader is referred to Section \ref{sec1-intro} for precised definitions. 
The distinction between these subclasses arises naturally from whether a divisor $d$ of $a^k+b^k$ is witnessed by an odd or an even exponent $k$, leading to structural differences that are not visible when considering good integers as a whole. 
As will be seen, these refinements not only yield a more detailed characterization of the set $G_{(a,b)}$, but also reveal additional number-theoretic properties with significant implications in applications. 
In particular, the separation into oddly-good and evenly-good integers has proven useful in the analysis of polynomial factorization over finite fields and in the characterization of certain classes of cyclic   codes over finite fields.   To enhance comprehension and facilitate further applications, a diagram is provided to illustrate their characterizations. In addition, an algorithm is presented to support the   computation and identification of these  good integers.

It is immediate to verify that $1|(a+b)$ and $1|(a^2+b^2)$, which shows that the integer $1$ is trivially both  evenly-good and oddly-good.  
For the case $\ell=2$, the situation is slightly more delicate. 
The integer $2$ is good \emph{if and only if} $ab$ is odd. 
In this case, $a+b$ must be even, which forces $2|(a+b)$ and $2|(a^2+b^2)$. 
Thus, whenever $2$ is good, it also belongs simultaneously to $EG_{(a,b)}$ and $OG_{(a,b)}$.   In contrast, for any integer $\ell>2$, the situation changes fundamentally. 
It is no longer possible for the same integer $\ell$ to satisfy both characterizations at once; that is, no such $\ell$ can be both an evenly-good  integer and an oddly-good integer  simultaneously.

 \begin{proposition}[{\cite[Proposition 3.1]{J2018a}}]
 	\label{disj} Let $a$, $b$ and $\ell>2$ be   pairwise coprime  nonzero integers. If   $\ell\in G_{(a,b)}$, then either $\ell \in OG_{(a,b)} $ or $\ell \in EG_{(a,b)}$, but not both. 
 \end{proposition}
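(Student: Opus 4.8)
The plan is to assume $\ell \in G_{(a,b)}$ with $\ell > 2$ and show that $\ell$ cannot be both oddly-good and evenly-good, while at least one of the two must hold. The second part is immediate: since $\ell \in G_{(a,b)}$ means $\ell \mid (a^k + b^k)$ for some positive $k$, that $k$ is either odd or even, giving $\ell \in OG_{(a,b)}$ or $\ell \in EG_{(a,b)}$ respectively. So the entire content of the proposition lies in the \emph{exclusivity}: for $\ell > 2$, these two cases cannot occur simultaneously. By Lemma~\ref{gcd-1-ab}, $\gcd(a,\ell) = \gcd(b,\ell) = 1$, so $ab^{-1}$ is a well-defined unit modulo $\ell$, and I would phrase everything in terms of the multiplicative order of $u := ab^{-1}$ in $\mathbb{Z}_\ell^\times$.

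First I would translate the divisibility condition into a statement about $u$. Observe that $\ell \mid (a^k + b^k)$ is equivalent to $a^k \equiv -b^k \pmod \ell$, and since $b$ is invertible mod $\ell$, this says $u^k \equiv -1 \pmod \ell$. Thus:
\begin{itemize}
\item[] $\ell \in OG_{(a,b)}$ iff $u^k \equiv -1 \pmod \ell$ for some \emph{odd} $k$;
\item[] $\ell \in EG_{(a,b)}$ iff $u^k \equiv -1 \pmod \ell$ for some \emph{even} $k$.
\end{itemize}
Now suppose toward a contradiction that both hold, say $u^{k_1} \equiv -1$ with $k_1$ odd and $u^{k_2} \equiv -1$ with $k_2$ even. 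Let $m = \mathrm{ord}_\ell(u)$. From $u^{k_1} \equiv -1$ we get $u^{2k_1} \equiv 1$, so $m \mid 2k_1$ but $m \nmid k_1$ (as $u^{k_1} \neq 1$ since $-1 \not\equiv 1$ for $\ell > 2$); hence $m$ is even, and writing $m = 2m'$ one checks the unique element of order $2$ in the cyclic group $\langle u \rangle$ that equals $-1$ forces $u^{m/2} \equiv -1$, while $u^t \equiv -1$ happens exactly when $t \equiv m/2 \pmod m$.

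The crux is then a parity argument on the exponent $m/2$. Both $k_1$ and $k_2$ satisfy $k_i \equiv m/2 \pmod m$. Since $m$ is even, all solutions $k \equiv m/2 \pmod m$ have the same parity as $m/2$ (because adding multiples of the even number $m$ preserves parity). But $k_1$ is odd and $k_2$ is even --- a contradiction. This is the step I expect to be the main obstacle: I must argue carefully that the set of exponents $k$ with $u^k \equiv -1$ is exactly the coset $\tfrac{m}{2} + m\mathbb{Z}$ (restricted to positive integers), which requires knowing $-1$ is the unique element of order $2$ in $\langle u \rangle$ and that $\ell > 2$ guarantees $-1 \not\equiv 1 \pmod \ell$. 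Once the exponent set is pinned down to a single residue class modulo $m$ with $m$ even, the parity of every such exponent is fixed, so $\ell$ cannot be simultaneously oddly-good (odd exponent) and evenly-good (even exponent). I would close by remarking that the hypothesis $\ell > 2$ is exactly what rules out the degenerate case $\ell \in \{1,2\}$ where $-1 \equiv 1$ and the parity argument collapses, consistent with the discussion preceding the proposition.
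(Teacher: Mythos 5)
Your proof is correct: reducing $\ell \mid (a^k+b^k)$ to $(ab^{-1})^k \equiv -1 \pmod{\ell}$, showing the solution set of exponents is the single residue class $\tfrac{m}{2} + m\mathbb{Z}$ with $m = \mathrm{ord}_\ell(ab^{-1})$ even, and concluding by parity is exactly the argument the paper's framework points to (cf.\ Proposition~\ref{2order} and the remarks following Theorem~\ref{goodP}); the paper itself states the proposition without proof, deferring to \cite{J2018a}, and your use of $\ell>2$ to guarantee $-1\not\equiv 1$ is precisely where the hypothesis enters.
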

 Based on Proposition \ref{disj}, we present the characterization and fundamental properties of evenly-good and oddly-good integers in a systematic manner, treating them separately in the next two sections.

 \subsection{Oddly-Good Integers}

In this subsection, our attention is devoted to the subclass of \emph{oddly-good} integers, originally investigated in \cite{J2018a}.  
We provide a complete characterization of these integers and discuss their fundamental algebraic and number-theoretic properties in detail.  
To supplement the theoretical development, a number of illustrative examples are also presented, serving to demonstrate the characterization in explicit and concrete cases.

 \begin{proposition}[{\cite[Proposition 3.2]{J2018a}}]
 	\label{odd-good} Let $a$ and $b$ be coprime nonzero integers and let $d>1$ be an  odd  integer. Then   $d\in OG_{(a,b)}$   if and only if $2|| {\rm ord}_{p}(\frac{a}{b})$ for every prime $p$ dividing $d$.  
 \end{proposition}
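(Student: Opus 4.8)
The plan is to phrase the whole statement in terms of the single order $n := \mathrm{ord}_d(\frac{a}{b})$ and its $2$-adic valuation, and then descend to the individual primes. Throughout, set $\theta = ab^{-1} \in \mathbb{Z}_d^\times$ (which is well defined since any $d$ meeting either side of the equivalence is coprime to $ab$ by Lemma~\ref{gcd-1-ab}), and for a positive integer $m$ let $v_2(m)$ be the exponent with $2^{v_2(m)} \parallel m$. Because $\gcd(b,d)=1$, the condition $d \mid a^k+b^k$ is equivalent to $\theta^k \equiv -1 \pmod d$, so $d \in OG_{(a,b)}$ says exactly that $\theta^k \equiv -1 \pmod d$ for some odd $k$.

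For the forward implication, I would assume $d \in OG_{(a,b)}$ and fix an odd $k$ with $\theta^k \equiv -1 \pmod d$. Since $d \geq 3$, we have $-1 \not\equiv 1 \pmod d$, and a short argument from $n \mid 2k$ (together with the fact that $n$ odd would force $n \mid k$, hence $\theta^k \equiv 1$) shows $v_2(n) = 1$. At the same time $d$ is in particular good, so Theorem~\ref{goodP} supplies a common $s \geq 1$ with $2^s \parallel \mathrm{ord}_p(\frac{a}{b})$ for every prime $p \mid d$. Comparing these two facts through the valuation identity below gives $s = 1$, which is precisely the assertion $2 \parallel \mathrm{ord}_p(\frac{a}{b})$ for all $p \mid d$.

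For the converse, I would assume $2 \parallel \mathrm{ord}_p(\frac{a}{b})$ for every prime $p \mid d$. This is the hypothesis of Theorem~\ref{goodP} with $s = 1$, so $d \in G_{(a,b)}$; the consequence of Theorem~\ref{goodP} already recorded in the text then gives that $n$ is even and $\theta^{n/2} \equiv -1 \pmod d$. The valuation identity forces $v_2(n)=1$, so $n/2$ is odd, and taking $k = n/2$ exhibits an odd exponent with $d \mid a^{k}+b^{k}$; hence $d \in OG_{(a,b)}$.

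The step carrying the argument --- and the one I expect to be the main obstacle to state cleanly --- is the valuation identity $v_2(n) = \max_{p \mid d} v_2\!\left(\mathrm{ord}_p(\frac{a}{b})\right)$. I would prove it by writing $n = \mathrm{lcm}_{p^r \parallel d}\, \mathrm{ord}_{p^r}(\frac{a}{b})$ and invoking Proposition~\ref{ord} to write $\mathrm{ord}_{p^r}(\frac{a}{b}) = \mathrm{ord}_p(\frac{a}{b})\, p^{i}$; since each $p$ is odd, the factor $p^{i}$ leaves the $2$-adic valuation untouched, so $v_2\!\left(\mathrm{ord}_{p^r}(\frac{a}{b})\right) = v_2\!\left(\mathrm{ord}_p(\frac{a}{b})\right)$, and the valuation of an lcm is the maximum of the valuations. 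This is exactly what lets me pass between the global parity condition ``$2 \parallel n$'' (equivalently, the existence of an odd exponent $k$) and the prime-by-prime condition in the statement.
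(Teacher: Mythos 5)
Your proof is correct. The survey itself states this proposition without proof (deferring to the cited reference), so there is no in-paper argument to compare against; but your argument is sound and is built entirely from ingredients the paper does supply: the reduction of $d\mid a^k+b^k$ to $\theta^k\equiv-1\pmod d$ via Lemma~\ref{gcd-1-ab}, Theorem~\ref{goodP} together with the remark following it (evenness of $n$ and $\theta^{n/2}\equiv-1\pmod d$), and Proposition~\ref{ord} to justify the valuation identity $v_2(n)=\max_{p\mid d}v_2\bigl(\mathrm{ord}_p(\tfrac{a}{b})\bigr)$, which correctly mediates between the global condition ``some odd $k$ works'' and the prime-by-prime condition $2\,\Vert\,\mathrm{ord}_p(\tfrac{a}{b})$.
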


From Proposition~\ref{odd-good},   the following corollary can be obtained directly, which gives an alternative characterization of oddly-good odd integers.  Beyond offering a clear criterion for their identification, this result further shows that the collection of oddly-good odd integers is closed under usual multiplication, thereby highlighting an intrinsic algebraic structure within this subclass.

 \begin{corollary}[{\cite[Corollary 3.1]{J2018a}}]
 	\label{cor-oddly} Let $a$ and $b$ be coprime nonzero integers and let $d>1$ be an  odd  integer. Then the following statements are equivalent. 
 	\begin{enumerate} 
 		\item $d\in OG_{(a,b)}$. 
 		\item $j\in OG_{(a,b)}$ for all divisors $j$ of $d$. 
 		\item $p\in OG_{(a,b)}$  for all prime divisors $p$ of $d$. 
 	\end{enumerate}
 \end{corollary}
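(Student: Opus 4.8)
The plan is to funnel all three statements through the prime-level criterion supplied by Proposition~\ref{odd-good}. That proposition identifies, for any odd integer $m>1$, the membership $m\in OG_{(a,b)}$ with the condition that $2||{\rm ord}_p(\frac{a}{b})$ for every prime $p\mid m$. I would therefore abbreviate by $(\star)$ the assertion ``$2||{\rm ord}_p(\frac{a}{b})$ for every prime $p\mid d$'', observe that each of the three statements is equivalent to $(\star)$, and organize the argument as the cycle of implications $1)\Rightarrow 2)\Rightarrow 3)\Rightarrow 1)$.

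I would carry out the cycle as follows. For $1)\Rightarrow 2)$, assuming $d\in OG_{(a,b)}$, Proposition~\ref{odd-good} gives $(\star)$; now take any divisor $j$ of $d$. If $j=1$, then $j$ is oddly-good because $1\mid(a+b)$, as already noted at the start of this section. If $j>1$, then $j$ is odd and every prime dividing $j$ also divides $d$, so $(\star)$ restricts to the corresponding condition for $j$, and a second application of Proposition~\ref{odd-good} (now to $j$) yields $j\in OG_{(a,b)}$. The implication $2)\Rightarrow 3)$ is immediate, since every prime divisor $p$ of $d$ is in particular a divisor of $d$. Finally, for $3)\Rightarrow 1)$, applying Proposition~\ref{odd-good} to each prime $p\mid d$—whose only prime divisor is $p$ itself—turns $p\in OG_{(a,b)}$ into $2||{\rm ord}_p(\frac{a}{b})$; since this holds for every prime $p\mid d$, condition $(\star)$ is recovered, and one last application of Proposition~\ref{odd-good} to $d$ gives $d\in OG_{(a,b)}$.

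The argument is essentially bookkeeping, so I do not anticipate a genuine obstacle; the only subtlety is that Proposition~\ref{odd-good} is stated for odd integers strictly greater than $1$ and hence does not directly cover the divisor $j=1$ occurring in statement 2). This gap is harmless and is patched by the elementary observation that $1$ is oddly-good. Beyond that edge case, the crux is simply that the set of primes dividing some divisor of $d$ coincides with the set of primes dividing $d$, which is precisely what lets $(\star)$ propagate downward to all divisors and upward from the prime factors.
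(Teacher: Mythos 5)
Your proposal is correct and follows exactly the route the paper intends: the corollary is presented as a direct consequence of Proposition~\ref{odd-good}, and your cycle of implications $1)\Rightarrow 2)\Rightarrow 3)\Rightarrow 1)$, including the separate handling of the trivial divisor $j=1$, is the standard bookkeeping the paper leaves to the reader.
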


  Together with  Propositions \ref{prop2d} and  \ref{prop2m},    the characterization   oddly-good  even integers is given in the following corollary. 
 \begin{corollary}[{\cite[Corollary 3.2]{J2018a}}]
 	\label{odd-good2} Let $a$ and $b$ be coprime nonzero integers and let $d>1$ be an  odd  integer.   
 	\begin{enumerate}
 		\item  The following statements are equivalent.
 		\begin{enumerate}
 			\item $d\in OG_{(a,b)}$.
 			\item       $2d\in OG_{(a,b)}$.  
 		\end{enumerate} 
 		\item  For each $\beta \ge 2$,  $2^\beta d\in OG_{(a,b)}$ if and only if $2^\beta d\in G_{(a,b)}$.
 	\end{enumerate}    
 \end{corollary}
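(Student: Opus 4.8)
The plan is to convert both parts into statements about the two-adic valuation $v_2(\mathrm{ord}_{(\cdot)}(a/b))$ and then to feed in the order computations already performed in Propositions~\ref{prop2d} and~\ref{prop2m}. Before anything else I would record the parity constraint forced by Lemma~\ref{gcd-1-ab}: an even integer can lie in $G_{(a,b)}$ (hence in $OG_{(a,b)}$) only if it is coprime to $ab$, so $2d$ and $2^\beta d$ can be good only when $a$ and $b$ are both odd. Since statement $1(b)$ and all of Part~2 concern even moduli, the content lives in the regime $ab$ odd --- which is precisely the standing hypothesis of Propositions~\ref{prop2d} and~\ref{prop2m} --- and I would work there throughout (reducing to $a,b>0$ if the sign needs attention); when $ab$ is even no even integer is good and the even-modulus assertions are degenerate.

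The engine of the argument is the elementary reformulation of oddly-goodness: for $\ell>2$ coprime to $ab$, one has $\ell\in OG_{(a,b)}$ exactly when $2\Vert\mathrm{ord}_\ell(a/b)$. I would justify this by unwinding the definition: $\ell\in OG_{(a,b)}$ means $(ab^{-1})^k\equiv -1\pmod{\ell}$ for some odd $k$; writing $t=\mathrm{ord}_\ell(a/b)$, such a $k$ exists iff $t$ is even with solution set $k\equiv t/2\pmod t$, and one solution is odd iff $t/2$ is odd, i.e. $2\Vert t$. For odd $\ell$ this is Proposition~\ref{odd-good} combined with Proposition~\ref{ord} (which gives $v_2(\mathrm{ord}_{p^r})=v_2(\mathrm{ord}_p)$ for odd $p$, so $v_2(\mathrm{ord}_\ell)=\max_{p\mid\ell}v_2(\mathrm{ord}_p)$). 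The point I would lean on most is the refinement displayed right after Propositions~\ref{prop2d} and~\ref{prop2m}: when the relevant modulus is good, the order-two element $(ab^{-1})^{\mathrm{ord}/2}$ is genuinely $-1$, so that $2\Vert\mathrm{ord}$ immediately yields a divisibility $\ell\mid a^{m}+b^{m}$ with $m=\mathrm{ord}_\ell/2$ odd.

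For Part~1 the implication $2d\in OG_{(a,b)}\Rightarrow d\in OG_{(a,b)}$ is immediate, since $d\mid 2d\mid a^k+b^k$ for the same odd $k$. For the converse I would start from $d\in OG_{(a,b)}$, use Proposition~\ref{odd-good} (with Proposition~\ref{ord}) to obtain $2\Vert\mathrm{ord}_d(a/b)$, and then apply Proposition~\ref{prop2d} to get $2d\in G_{(a,b)}$ with $\mathrm{ord}_{2d}(a/b)=\mathrm{ord}_d(a/b)$; hence $2\Vert\mathrm{ord}_{2d}(a/b)$, the exponent $\mathrm{ord}_{2d}/2$ is odd, and the displayed congruence after Proposition~\ref{prop2d} gives $2d\mid a^{\mathrm{ord}_{2d}/2}+b^{\mathrm{ord}_{2d}/2}$, so $2d\in OG_{(a,b)}$.

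For Part~2 the inclusion $OG_{(a,b)}\subseteq G_{(a,b)}$ settles one direction, so the substance is $2^\beta d\in G_{(a,b)}\Rightarrow 2^\beta d\in OG_{(a,b)}$ for $\beta\ge2$. Here I would apply Proposition~\ref{prop2m} directly: goodness of $2^\beta d$ forces $2\Vert\mathrm{ord}_{2^\beta d}(a/b)$, so $\mathrm{ord}_{2^\beta d}/2$ is odd and the displayed congruence after Proposition~\ref{prop2m} exhibits $2^\beta d\mid a^{\mathrm{ord}_{2^\beta d}/2}+b^{\mathrm{ord}_{2^\beta d}/2}$ with an odd exponent, i.e. $2^\beta d\in OG_{(a,b)}$; equivalently, by Proposition~\ref{disj} a good modulus $>2$ is oddly- or evenly-good exclusively, and the computed order rules out the evenly-good case. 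The main obstacle in both parts is exactly this production of an \emph{odd} witnessing exponent, which is not automatic from mere goodness; it rests on the two facts already secured by Propositions~\ref{prop2d} and~\ref{prop2m}, namely that attaching the factor $2^\beta$ preserves $2\Vert\mathrm{ord}$ and that the pertinent half-power of $ab^{-1}$ equals $-1$ rather than some other element of order two. Once these are in place the parity of the exponent is forced, and only the $ab$-odd bookkeeping from the first step remains.
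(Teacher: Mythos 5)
Your argument is correct and uses exactly the ingredients the paper points to---Propositions \ref{odd-good}, \ref{ord}, \ref{prop2d} and \ref{prop2m} together with the displayed congruences $({a}{b}^{-1})^{\mathrm{ord}/2}\equiv -1$---so it matches the paper's (unwritten) derivation of this corollary from those propositions. The only imprecision is your remark that the even-modulus assertions become ``degenerate'' when $ab$ is even: for Part~1 the forward implication genuinely fails in that regime (e.g.\ $3\in OG_{(1,2)}$ while $6\notin G_{(1,2)}$ by Lemma~\ref{gcd-1-ab}), which is why the corollary must be read under the paper's standing convention that all integers under consideration are coprime to both $a$ and $b$, forcing $ab$ odd whenever an even modulus appears.
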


Based on the discussion above, we summarize the characterization and fundamental properties of arbitrary oddly-good integers.  This result may also be regarded as a corrected and refined version of {\cite[Theorem~3.1]{J2018a}}.

\begin{theorem} \label{thm3}  Let $a$ and $b$     be   coprime  nonzero integers  and  let $\ell =2^\beta d$ be  an  integer such that $d$ is odd and $\beta \ge 0$. Then  one of the following statements holds.
     \begin{enumerate}[$1)$]
         \item  If  $ab$ is odd, then $\ell=2^\beta d\in OG_{(a,b)}$  if and only if  one of the following statements holds.
         \begin{enumerate}[$(a)$]
             \item $\beta \in \{0,1\}$ and  $d=1$.
             \item 
             $\beta \in \{0,1\}$, $d\geq 3$,   and  $2|| {\rm ord}_{p}(\frac{a}{b})$ for every prime $p$ dividing $d$. 
             \item $\beta \ge 2$, $d=1$  and  $2^\beta |(a+b)$. 
             \item $\beta \ge 2$, $d\geq 3$,      $ 2^\beta |(a+b)$   and  $ d\in G_{(a,b)}$ is such that $2|| {\rm ord}_{d}(\frac{a}{b})$.   
         \end{enumerate}
         \item  If $ab$ is even,  then $\ell=2^\beta d\in OG_{(a,b)}$  if and only if    one of the following statements holds.
         \begin{enumerate}[$(a)$]
             \item  $\beta =0$   and $d=1$.
             \item  
             $\beta =0$, $d\geq 3$,   and  $2|| {\rm ord}_{p}(\frac{a}{b})$ for every prime $p$ dividing $d$. 
         \end{enumerate}
     \end{enumerate}
 \end{theorem}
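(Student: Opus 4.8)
The plan is to prove the theorem purely by assembling the earlier characterizations, organized as a case analysis on the parity of $ab$ and on the two factors of $\ell = 2^\beta d$. Throughout I keep the standing assumption $\gcd(a,\ell)=\gcd(b,\ell)=1$; if this fails, then $\ell \notin G_{(a,b)} \supseteq OG_{(a,b)}$ by Lemma~\ref{gcd-1-ab} and there is nothing to check. I would first dispose of the degenerate values $\ell \in \{1,2\}$ directly: since $1 \mid (a+b)$ always and $2 \mid (a+b)$ whenever $ab$ is odd, both lie in $OG_{(a,b)}$ via the odd exponent $k=1$ when $ab$ is odd (clause $1)(a)$), while for $ab$ even only $\ell = 1$ survives (clause $2)(a)$). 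For all remaining $\ell > 2$ I may then freely invoke Proposition~\ref{disj}.

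The case $ab$ even is settled quickly. If $ab$ is even, then $2 \mid ab$, so any even $\ell$ (that is, $\beta \ge 1$) shares the factor $2$ with $ab$, whence $\ell \notin G_{(a,b)}$ by Lemma~\ref{gcd-1-ab}; thus $\beta = 0$ is forced and $\ell = d$. For $d \ge 3$, Proposition~\ref{odd-good} gives $d \in OG_{(a,b)}$ exactly when $2 \Vert {\rm ord}_{p}(\frac{a}{b})$ for every prime $p \mid d$, which is clause $2)(b)$, completing this case.

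The substantive work is the case $ab$ odd, where $a+b$ is even and even factors can contribute; I would split on $\beta$. For $\beta \in \{0,1\}$ and $d \ge 3$: when $\beta = 0$ apply Proposition~\ref{odd-good} directly, and when $\beta = 1$ first reduce via Corollary~\ref{odd-good2}$(1)$, namely $2d \in OG_{(a,b)} \iff d \in OG_{(a,b)}$, then apply Proposition~\ref{odd-good}; both yield clause $1)(b)$. For $\beta \ge 2$ and $d = 1$, Proposition~\ref{evengood} gives $2^\beta \in G_{(a,b)} \iff 2^\beta \mid (a+b)$, and here the witnessing exponent may be taken to be the odd value $k=1$, so $2^\beta \in G_{(a,b)}$ already forces $2^\beta \in OG_{(a,b)}$; this is clause $1)(c)$. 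Finally, for $\beta \ge 2$ and $d \ge 3$, Corollary~\ref{odd-good2}$(2)$ collapses the question to whether $2^\beta d \in G_{(a,b)}$, and Theorem~\ref{thm2} (equivalently Proposition~\ref{prop2m}) characterizes this by $2^\beta \mid (a+b)$ together with $d \in G_{(a,b)}$ and $2 \Vert {\rm ord}_{d}(\frac{a}{b})$, which is clause $1)(d)$.

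The step I expect to demand the most care is not any single citation but the bookkeeping guaranteeing that a \emph{single odd exponent} $k$ simultaneously witnesses oddly-goodness of both the $2^\beta$-part and the odd part of $\ell$. This is precisely what Corollary~\ref{odd-good2}$(2)$ and the minimality argument in the proof of Proposition~\ref{prop2m} secure, where the least $k$ with $2^\beta d \mid (a^k + b^k)$ is shown to be odd. The secondary subtlety is the boundary behaviour: Propositions~\ref{disj} and~\ref{odd-good} require $\ell > 2$ and $d > 1$ respectively, so the values $\ell \in \{1,2\}$ must be verified by hand rather than folded into the general argument, which is exactly why the small-case clauses $(a)$ are carved out explicitly in the statement.
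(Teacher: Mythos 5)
Your proposal is correct and follows essentially the same route as the paper, which presents Theorem~\ref{thm3} as a summary assembled from Lemma~\ref{gcd-1-ab}, Proposition~\ref{odd-good}, Corollary~\ref{odd-good2}, and Propositions~\ref{evengood} and~\ref{prop2m}, with exactly the same case split on the parity of $ab$ and on $\beta$. The hand-verification of $\ell\in\{1,2\}$ and the reduction of the $\beta\ge 2$, $d\ge 3$ case to goodness via Corollary~\ref{odd-good2}$(2)$ match the paper's intended argument.
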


For each $2^\beta d\in OG_{(a,b)}$,   we have $2|| {\rm ord}_{2^\beta  d}(\frac{a}{b})$ by  Proposition \ref{prop2m} and Theorem  \ref{thm3}.  It follows that  $\frac{{\rm ord}_{2^\beta d}(\frac{a}{b})}{2}$ is odd  and 
$({a}{b}^{-1})^\frac{{\rm ord}_{2^\beta d}(\frac{a}{b})}{2}\equiv -1 \,({\rm mod}\, 2^\beta d)$   for all     integers $2^\beta d\in G_{(a,b)}$. As desired,  we have 
\[2^\beta d \big\vert \left(a^\frac{{\rm ord}_{2^\beta d}(\frac{a}{b})}{2} +b^\frac{{\rm ord}_{2^\beta d}(\frac{a}{b})}{2} \right)  .\]

 \begin{example}    Let $a=5$  and $b=7$.   From Example \ref{ex2}, we have   $d = 1573 = 11^2 \cdot 13 \in G_{(a,b)}$ with 
  $2^1 \Vert {\rm ord}_{11}\!\left(\frac{a}{b}\right)$ and 
$2^1 \Vert {\rm ord}_{13}\!\left(\frac{a}{b}\right)$. By Proposition \ref{odd-good}, we have  $d = 1573 = 11^2 \cdot 13 \in OG_{(a,b)}$   with   ${\rm ord}_d\!\left(\frac{a}{b}\right) = 330$ and 
    $d |\left(a^{165} + b^{165}\right)$.
Since   $2|(a+b)$ and $2^2 |(a+b) $,  it follows that  $2d=3146  , 2^2 d =6292 \in OG_{(a,b)}$. 
As ${\rm ord}_{2d}\!\left(\frac{a}{b}\right) = 330= {\rm ord}_{2^2d}\!\left(\frac{a}{b}\right)$, we have 
    $2d |\left(a^{165} + b^{165}\right)$ and $2^2d |\left(a^{165} + b^{165}\right)$. Since $2^\beta \nmid (a+b)$ for all $\beta \geq 3$, we conclude    $2^\beta d \notin OG_{(a,b)}$ for all $\beta \geq 3$ by Theorem \ref{thm3}.
\end{example}
 \subsection{Evenly-Good Integers} 

Here, we turn our attention to the subclass of {evenly-good} integers, as introduced in \cite{J2018a}.  
We present a complete characterization of these integers and examine their fundamental structural properties.  
To complement the theoretical results, a number of illustrative examples are also provided.

From Proposition~\ref{disj}, it follows that the sets of evenly-good and oddly-good integers can overlap only in the cases $\ell=1$ or $\ell=2$.  
By combining Theorem~\ref{goodP} with Proposition~\ref{odd-good}, we obtain the following characterization and number-theoretic properties of evenly-good odd integers.

 \begin{proposition}[{\cite[Proposition 3.3]{J2018a}}]
 	\label{odd-even} Let $a$ and $b$ be coprime nonzero  integers and let $d>1$ be an  odd  integer. Then  
 	$d\in EG_{(a,b)}$  if and only if there exists $s\geq 2$ such that $2^s|| {\rm ord}_{p}(\frac{a}{b})$ for every prime $p$ dividing $d$.  
 \end{proposition}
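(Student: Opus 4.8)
The plan is to derive this characterization by combining the disjointness result of Proposition~\ref{disj} with the known characterizations of good and oddly-good odd integers. The key observation is that, since $d>1$ is odd, we have $d\ge 3>2$, so Proposition~\ref{disj} applies to $d$; consequently, whenever $d\in G_{(a,b)}$, it is evenly-good precisely when it fails to be oddly-good. In other words, $d\in EG_{(a,b)}$ if and only if $d\in G_{(a,b)}$ and $d\notin OG_{(a,b)}$. I would take this equivalence as the backbone of the argument and then translate each of the two conditions into a statement about the $2$-adic valuations of the orders ${\rm ord}_p(\frac ab)$.

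For the forward implication, I would assume $d\in EG_{(a,b)}$. By definition every evenly-good integer is good, so Theorem~\ref{goodP} supplies an exponent $s\ge 1$ with $2^s\,\Vert\,{\rm ord}_p(\frac ab)$ for every prime $p\mid d$; note that Theorem~\ref{goodP} forces this exponent to be common to all such primes. Since $d>2$ and $d\in G_{(a,b)}$, Proposition~\ref{disj} gives $d\notin OG_{(a,b)}$, and by Proposition~\ref{odd-good} this means it is \emph{not} the case that $2\,\Vert\,{\rm ord}_p(\frac ab)$ for all $p\mid d$. As the common valuation equals $s$, we cannot have $s=1$, hence $s\ge 2$, which is exactly the asserted condition.

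For the converse, I would start from the existence of some $s\ge 2$ with $2^s\,\Vert\,{\rm ord}_p(\frac ab)$ for every prime $p\mid d$. Applying Theorem~\ref{goodP} (with this $s\ge 1$) yields $d\in G_{(a,b)}$, while Proposition~\ref{odd-good} shows $d\notin OG_{(a,b)}$ because the common valuation $s$ differs from $1$. Since $d>2$ is good, Proposition~\ref{disj} then forces $d\in EG_{(a,b)}$, completing the equivalence.

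I expect essentially no deep obstacle here: the proposition is a bookkeeping consequence of the three earlier results. The only point requiring care is the interface between the quantifiers — one must use that Theorem~\ref{goodP} produces a valuation $s$ that is simultaneously correct for \emph{all} primes $p\mid d$, so that ``not oddly-good'' (failure of $s=1$) is equivalent to $s\ge 2$ rather than merely to the existence of some prime whose valuation is $\ne 1$. Confirming that the common exponent is genuinely shared across all prime divisors is the one step I would double-check against the precise statements of Theorem~\ref{goodP} and Proposition~\ref{odd-good}.
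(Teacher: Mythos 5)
Your proposal is correct and follows exactly the route the paper indicates: it derives the characterization by combining the disjointness statement (Proposition~\ref{disj}) with Theorem~\ref{goodP} and Proposition~\ref{odd-good}, reducing ``evenly-good'' to ``good but not oddly-good'' and then comparing the common $2$-adic valuation $s$. Your attention to the fact that Theorem~\ref{goodP} yields a single valuation $s$ shared by all prime divisors of $d$ is precisely the point that makes the translation between the two conditions valid.
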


By combining   Theorem~\ref{thm2},  Theorem~\ref{thm3}, and Proposition \ref{odd-even}  we obtain the following comprehensive characterization of arbitrary evenly-good integers. 
This result provides a unified criterion for identifying such integers, clarifies their structural properties.

 \begin{theorem}[{\cite[Theorem 3.2]{J2018a}}]\label{thm-even} Let $a$ and $b$     be   coprime   nonzero  integers  and  let $\ell =2^\beta d$ be  an  integer such that $d$ is odd and $\beta \ge 0$. Then   one of the following statements holds.
    \begin{enumerate}
        \item  If  $ab$ is odd, then $\ell=2^\beta d\in EG_{(a,b)}$  if and only if  one of the following statements holds.
        \begin{enumerate}
            \item $\beta \in \{0,1\}$ and  $d=1$.
            \item $\beta \in \{0,1\}$, $d\geq 3$,  and    there exists $s\geq 2$ such that $2^s|| {\rm ord}_{p}(\frac{a}{b})$ for every prime $p$ dividing $d$.  
        \end{enumerate}
        \item  If $ab$ is even,  then $\ell=2^\beta d\in EG_{(a,b)}$  if and only if    one of the following statements holds.
        \begin{enumerate}
            \item  $\beta =0$ and  $d=1$.
            \item  $\beta =0$, $d\geq 3$,    and 
            there exists $s\geq 2$ such that $2^s|| {\rm ord}_{p}(\frac{a}{b})$ for every prime $p$ dividing $d$.  
        \end{enumerate}
    \end{enumerate} 
\end{theorem}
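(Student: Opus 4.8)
The plan is to derive the characterization of $EG_{(a,b)}$ by subtracting the oddly-good integers from the good integers, exploiting the disjointness established in Proposition~\ref{disj}. The starting point is the relation $G_{(a,b)} = OG_{(a,b)} \cup EG_{(a,b)}$ together with Proposition~\ref{disj}, which guarantees that for every integer $\ell > 2$ one has $\ell \in EG_{(a,b)}$ if and only if $\ell \in G_{(a,b)}$ and $\ell \notin OG_{(a,b)}$. Thus, once the integers $1$ and $2$ are dealt with separately, the theorem reduces to a case-by-case comparison of the explicit criteria furnished by Theorem~\ref{thm2} (for membership in $G_{(a,b)}$) and Theorem~\ref{thm3} (for membership in $OG_{(a,b)}$).

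First I would settle the small cases. Since $1 \mid (a^2+b^2)$, we always have $1 \in EG_{(a,b)}$, matching the clause $\beta = 0$, $d = 1$. For $\ell = 2$ with $ab$ odd, both $a$ and $b$ are odd, so $a^2 + b^2 \equiv 2 \pmod 4$ and hence $2 \mid (a^2+b^2)$, giving $2 \in EG_{(a,b)}$; this is the remaining $d=1$ clause under $ab$ odd. When $ab$ is even, Lemma~\ref{gcd-1-ab} forces $2 \notin G_{(a,b)}$, so no $\ell$ with $\beta \ge 1$ can be evenly-good, consistent with case~2 listing only $\beta = 0$.

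Next, for $\ell = 2^\beta d > 2$, I would run the set difference $G_{(a,b)} \setminus OG_{(a,b)}$ through the four sub-cases of Theorem~\ref{thm2} and Theorem~\ref{thm3}. The decisive observation is that in the regime $\beta \ge 2$ the membership conditions for $G_{(a,b)}$ and for $OG_{(a,b)}$ are \emph{verbatim identical} (both require $2^\beta \mid (a+b)$ together with $2 \Vert \mathrm{ord}_d(\frac{a}{b})$, or the corresponding $d=1$ statement). Hence every good integer with $\beta \ge 2$ is already oddly-good, and the set difference removes all such integers; this explains, and is the source of, the absence of any $\beta \ge 2$ clause in the statement. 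In the remaining regime $\beta \in \{0,1\}$ (respectively $\beta = 0$ when $ab$ is even), Theorem~\ref{thm2} requires the existence of a common $s \ge 1$ with $2^s \Vert \mathrm{ord}_p(\frac{a}{b})$ for every prime $p \mid d$, while Theorem~\ref{thm3} (equivalently Proposition~\ref{odd-even}) imposes the stronger $s = 1$ for every such $p$.

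The main bookkeeping obstacle is the correct negation inside these quantified conditions. For a fixed prime $p$ the exponent $s_p$ with $2^{s_p} \Vert \mathrm{ord}_p(\frac{a}{b})$ is uniquely determined, and the hypothesis of Theorem~\ref{thm2} is precisely that all the $s_p$ coincide with a single value $s \ge 1$. Failing to be oddly-good means this common value is not $1$; since it exists and is a positive integer, it must satisfy $s \ge 2$. This yields exactly the criterion ``there exists $s \ge 2$ such that $2^s \Vert \mathrm{ord}_p(\frac{a}{b})$ for every prime $p \mid d$'' appearing in clauses $1(b)$ and $2(b)$. Assembling these observations over the two parities of $ab$ and the admissible values of $\beta$ completes the derivation; the only real care needed is to keep the common-$s$ structure intact while performing the negation, so that one does not mistakenly permit different primes to carry different $2$-adic valuations.
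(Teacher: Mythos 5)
Your proposal is correct and follows essentially the same route as the paper: the paper obtains Proposition~\ref{odd-even} by combining Theorem~\ref{goodP} and Proposition~\ref{odd-good} via the disjointness in Proposition~\ref{disj}, and then assembles Theorem~\ref{thm-even} from Theorem~\ref{thm2} and Theorem~\ref{thm3} exactly as in your set-difference argument $EG_{(a,b)} = G_{(a,b)} \setminus OG_{(a,b)}$ for $\ell>2$. Your observation that the $\beta\ge 2$ clauses of Theorems~\ref{thm2} and~\ref{thm3} coincide, and your care with negating the common-$s$ condition, match the paper's reasoning.
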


As a consequence, we obtain the following corollary; however, the converse does not necessarily hold. 
In contrast to oddly-good integers, the set of evenly-good odd integers does not   close   under the usual multiplication, which highlights a significant difference in their structural properties.

 \begin{corollary}[{\cite[Corollary 3.4]{J2018a}}]
 	\label{cor-evenly} Let $a$ and $b$ be coprime nonzero integers and let $d>1$ be an  odd  integer. If $d\in EG_{(a,b)}$   (resp. $d\in G_{(a,b)}$), then $j\in EG_{(a,b)}$   (resp. $j\in G_{(a,b)}$)  for all divisors $j$ of $d$. 
 \end{corollary}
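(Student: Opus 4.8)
The plan is to reduce both assertions to the prime-exponent characterizations already in hand, exploiting the elementary fact that the set of primes dividing a divisor $j$ of $d$ is contained in the set of primes dividing $d$. First I would record the coprimality preliminaries: since $d\in G_{(a,b)}$ (and $EG_{(a,b)}\subseteq G_{(a,b)}$), Lemma~\ref{gcd-1-ab} gives $\gcd(a,d)=1=\gcd(b,d)$, and because $j\mid d$ this immediately yields $\gcd(a,j)=1=\gcd(b,j)$, so that $\mathrm{ord}_p(\tfrac{a}{b})$ is well defined for every prime $p\mid j$. I would then dispose of the trivial divisor $j=1$ at once: since $1\mid(a+b)$ and $1\mid(a^2+b^2)$, we have $1\in G_{(a,b)}$ and $1\in EG_{(a,b)}$, so the claim holds for $j=1$ in both cases. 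For a divisor $j$ with $j>1$ (necessarily odd, as $d$ is odd), every prime $p$ dividing $j$ also divides $d$.

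For the good case I would invoke Theorem~\ref{goodP}: the hypothesis $d\in G_{(a,b)}$ supplies a single integer $s\geq 1$ with $2^s\Vert\mathrm{ord}_p(\tfrac{a}{b})$ for every prime $p\mid d$. The key step is to observe that this very same $s$ serves as a witness for $j$, precisely because the primes dividing $j$ form a subset of those dividing $d$; reading Theorem~\ref{goodP} in the reverse direction then yields $j\in G_{(a,b)}$. The evenly-good case is entirely parallel: Proposition~\ref{odd-even} translates $d\in EG_{(a,b)}$ into the existence of $s\geq 2$ with $2^s\Vert\mathrm{ord}_p(\tfrac{a}{b})$ for all $p\mid d$, and the same restriction-to-a-subset argument produces $j\in EG_{(a,b)}$.

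The crux---and the only place where anything substantive happens---is the transfer of the uniform exponent $s$ from $d$ to $j$; everything else is bookkeeping, so I do not anticipate a genuine obstacle. The point worth emphasizing is that both characterizations demand \emph{one} exponent $s$ valid simultaneously for all prime divisors, which is exactly what makes the property inherit downward to divisors. This also explains, in the surrounding discussion, why the converse fails for evenly-good integers: enlarging the prime set can destroy the common value of $s$, so a product of evenly-good primes need not itself be evenly-good, whereas passing to a divisor only shrinks the prime set and hence automatically preserves the witness $s$.
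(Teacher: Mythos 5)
Your proposal is correct and follows the same route the paper intends: the corollary is presented as a direct consequence of the prime-exponent characterizations (Theorem~\ref{goodP} for $G_{(a,b)}$ and Proposition~\ref{odd-even} for $EG_{(a,b)}$), and your observation that the uniform witness $s$ transfers to any divisor because its prime set shrinks is exactly the intended argument. Your closing remark about why the converse fails for evenly-good integers also matches the discussion surrounding the corollary in the paper.
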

From Theorem \ref{thm-even},  it can be deduced that 
         $4| {\rm ord}_{2^\beta  d}(\frac{a}{b})$  and   
$({a}{b}^{-1})^\frac{{\rm ord}_{2^\beta d}(\frac{a}{b})}{2}\equiv -1 \,({\rm mod}\, 2^\beta d)$   for all      $2^\beta d\in G_{(a,b)}$.  This  implies that   $\frac{{\rm ord}_{2^\beta d}(\frac{a}{b})}{2}$ is even and 
\[2^\beta d \big\vert \left(a^\frac{{\rm ord}_{2^\beta d}(\frac{a}{b})}{2} +b^\frac{{\rm ord}_{2^\beta d}(\frac{a}{b})}{2} \right)  .\]

 \begin{example}   Let  $a=5$ and $b=7$.    Then  $d= 11849= 17^2\cdot 41 \in EG_{(a,b)}$ with  $ {\rm ord}_{5}(\frac{a}{b}) =8$,  ${\rm ord}_{7}(\frac{a}{b})=40$. Precisely,  $2^3||{\rm ord}_{5}(\frac{a}{b}) $ and   $2^3||{\rm ord}_{13}(\frac{a}{b})$. Since ${\rm ord}_{d}(\frac{a}{b}) =680$, we have $d|(a^{340}+b^{340})$.

  By Theorem \ref{thm-even},  we have $2d=   23698 \in EG_{(a,b)}$.  Since ${\rm ord}_{2d}(\frac{a}{b}) ={\rm ord}_{d}(\frac{a}{b}) =680$ by Proposition \ref{prop2d}, we have $2d|(a^{340}+b^{340})$. Otherwise,   $2^\beta d \notin EG_{(a,b)}$ for all $\beta\geq 2$ by Theorem \ref{thm-even}.
 \end{example}

 	 \subsection{Use full Algorithm and Diagrams}

 	This subsection presents  two diagrams that visually illustrate the characterizations of evenly-good and oddly-good integers, as developed in the preceding discussions. The result highlighted in blue is adapted from \cite{M1997} (see Theorem~\ref{goodP}), while the remaining results  are summarized   from \cite{J2018b} and \cite{J2018a}. In accordance with Lemma~\ref{odd-good2}, the diagrams are separated into two distinct cases based on the parity of  $ab$. Specifically, the case where $ab$ is odd is presented in Figure~\ref{Fig3}, and the case where $ab$ is even  is given in Figure~\ref{Fig4}. To save space,  we write $OG$ and $EG$ instead of $OG_{(a,b)}$ and $EG_{(a,b)}$, respectively.

	\begin{figure}[!hbt]
\begin{tikzpicture} [
block/.style    = { rectangle, draw=black, 
    fill=black!5, text width=11em, text centered,
    rounded corners, minimum height=2em },
line/.style     = { draw, thick, ->, shorten >=2pt },
]
\matrix [column sep=2mm, row sep=8  mm] {
    
 &    \node [block] (ell) {$\ell$ };        &   & \\ %
    
    \node [block] (ell1) {$1,2\in G\& OG\& EG$ };&&  &\node [block] (ell6) {$2^\beta d\in G$ iff $2^\beta,d\in G$ and $2||$ord$_d(ab^{-1}$) };\\
    
 \node [block] (ell3) {$2^\beta \in G$ iff $2^\beta|(a+b) $};  &\node [block] (ell4) {\color{blue} $d\in G$ iff $\exists s\geq 1, \forall p|d$, $2^s||$ord$_p(ab^{-1}$) };&\node [block] (ell5) {$2d\in G$ iff  $d\in G$ };  & \node [block] (OG3) {$2^\beta d\in G$ iff  $2^\beta d\in OG$};\\

     \node [block] (OG0) {$2^\beta \in G $ iff $2^\beta \in OG$}; && \node [block] (OG2) {$2d\in OG$ iff  $d\in OG$};&  \node [block] (EG2) {$2d\in EG$ iff  $d\in EG$ }; \\
    
     &   \node [block] (OG1) {$OG$ iff $\forall p|d$, $2||$ord$_p(ab^{-1}$)  };     &\node [block] (EG1) {$EG$ iff $\exists s\geq 2, \forall p|d$, $2^s||$ord$_p(ab^{-1}$)  }; & \\
};

\begin{scope} [every path/.style=line]
\path (ell)        --   node{\quad \quad $1,2$}   (ell1);  


\path (ell)        --   node{\quad\quad \quad   $2^\beta, ~\beta\geq 2$}   (ell3);  

\path (ell)        --   node{\quad \quad \quad \quad $d$ is odd.}   (ell4);  

\path (ell)        --   node{\quad \quad \quad \quad\quad $2d,~d$ is odd.}   (ell5);  

\path (ell)        --   node{\quad\quad \quad \quad\quad \quad \quad $2^\beta d,~d$ is odd and $\beta\geq 2$.}   (ell6);  

\path (ell6)        --      (OG3);  

\path (ell5)        --      (OG2);      
\path (ell5)        --       (EG2);   

\path (ell4)        --       (OG1);      
\path (ell4)        --      (EG1);   

\path (ell3)        --     (OG0);                      
\end{scope}
%
\end{tikzpicture}
\caption{Characterization of oddly-good and evenly-good integers with respect to coprime  integers   $a$ and $b$, where  $ab$ is   even.} \label{Fig3}
\end{figure}
 
	\begin{figure}[!hbt]
\begin{tikzpicture} [
block/.style    = { rectangle, draw=black, 
    fill=black!5, text width=11em, text centered,
    rounded corners, minimum height=2em },
line/.style     = { draw, thick, ->, shorten >=2pt },
]
\matrix [column sep=2mm, row sep=8  mm] {
    
    & \node [block] (ell) {$\ell$ };        &    &\node{$ab$ is even.};& \\ %
    
    \node [block] (ell1) {$1$ is $G\& OG\& EG$ };&&  &\node [block] (ell6)  {$2^\beta d $ is bad };\\
    
    \node [block] (ell2)  {$2$ is bad};&  \node [block] (ell3)  {$2^\beta  $ is bad}; &\node [block] (ell4) { \color{blue} $d\in G$ iff $\exists s\geq 1, \forall p|d$, $2^s||$ord$_p(ab^{-1}$) };&\node [block] (ell5)  {$2d\ $  is bad};  &\\

    &&&& \\
    
    &   \node [block] (OG1) {$OG$ iff $\forall p|d$, $2||$ord$_p(ab^{-1}$)  };     &\node [block] (EG1) {$EG$ iff $\exists s\geq 2, \forall p|d$, $2^s||$ord$_p(ab^{-1}$)  }; &&& \\
};

\begin{scope} [every path/.style=line]
\path (ell)        --   node{\quad \quad $1$}   (ell1);  

\path (ell)        --   node{\quad\quad $2$}   (ell2);  

\path (ell)        --   node{\quad\quad \quad   $2^\beta, ~\beta\geq 2$}   (ell3);  

\path (ell)        --   node{\quad \quad \quad \quad $d$ is odd.}   (ell4);  

\path (ell)        --   node{\quad \quad \quad \quad\quad $2d,~d$ is odd.}   (ell5);  

\path (ell)        --   node{\quad\quad \quad \quad\quad \quad \quad $2^\beta d,~d$ is odd and $\beta\geq 2$.}   (ell6);

\path (ell4)        --       (OG1);      
\path (ell4)        --      (EG1);                
\end{scope}
%
\end{tikzpicture}
\caption{Characterization of oddly-good and evenly-good integers with respect to coprime  integers   $a$ and $b$, where  $ab$ is   even.} \label{Fig4}
\end{figure}

Algorithm~\ref{Alg2}   presents an efficient procedure  designed to identify and classify evenly-good and oddly-good integers based on the characterizations established in the previous sections. 
The algorithm provides a systematic procedure for determining whether a given integer belongs to either subclass and is derived directly from the results discussed earlier. 
Its step-by-step structure facilitates practical computation and enables a further  implementation.

\begin{algorithm}
	\caption{Identification  of the oddly and evenly-good  integers   with respect to coprime  integers  \(a\) and \(b\)}\label{Alg2}
	\begin{algorithmic}
		\Require Integers \(a\), \(b\), and \(\ell\geq 1\) such that $\gcd(a,b)=1$
		\If{\(\gcd(a,\ell) \ne 1\) or \(\gcd(b,\ell) \ne 1\)}
		\Return \(\ell \notin G_{(a,b)}\)  
		\Else
		\State Write \(\ell = 2^\beta d\) where \(d\) is odd and \(\beta \geq 0\)
		\If{\(d = 1\)}
		\If{\(2^\beta \nmid (a + b)\)}
		\Return \(\ell \notin G_{(a,b)} \) 
		\ElsIf{\(\beta = 0\)}
		\Return \(\ell \in OG_{(a,b)} \)  and    \(\ell \in EG_{(a,b)} \)    \quad 
		\ElsIf{\(\beta = 1\) and \(ab\) is odd}
		\Return  \(\ell \in OG_{(a,b)} \)  and    \(\ell \in EG_{(a,b)} \) 
		\Else ~ \Return \(\ell \in OG_{(a,b)} \)   
		\EndIf
		\Else
		\State Factor \(d = p_1^{r_1}p_2^{r_2}\dots p_t^{r_t}\)
		\State Compute \(\text{ord}_{p_i}(ab^{-1})\) for \(i = 1, \dots, t\)
		\State Let \(S = \{s_i : 2^{s_i} \Vert \text{ord}_{p_i}(ab^{-1})\}\)
		\If{\(\beta \geq 1\) and \(ab\) is even}
		\Return \(\ell \notin G_{(a,b)}\)  
		\ElsIf{\(\beta = 0\)}
		\If{\(S = \{0\}\) or \(|S| \geq 2\)}
		\Return \(\ell \notin G_{(a,b)}\)  
		\ElsIf{\(S = \{1\}\)}
		\Return \(\ell \in OG_{(a,b)}\) 
		\Else ~
		 \Return \(\ell \in EG_{(a,b)}\)  
		\EndIf
		\ElsIf{\(\beta = 1\) and \(ab\) is odd}
		\If{\(S = \{0\}\) or \(|S| \geq 2\)}
		\Return \(\ell \notin G_{(a,b)}\)  
		\ElsIf{\(S = \{1\}\)}
		\Return \(\ell \in OG_{(a,b)}\) 
		\Else ~
		\Return \(\ell \in EG_{(a,b)}\)  
		\EndIf 
		\Else
		\If{\(S = \{1\}\) and \(2^\beta |(a + b)\)}
		\Return \(\ell \in OG_{(a,b)}\)  
		\Else ~
		\Return \(\ell \notin G_{(a,b)}\)  
		\EndIf
		\EndIf
		\EndIf
		\EndIf
	\end{algorithmic}
\end{algorithm}

Based on the procedure described in Algorithm~\ref{Alg2}, Tables~\ref{T2} and \ref{T3} present illustrative results for oddly-good and evenly-good integers, respectively. These tables visualize the output of the algorithm, providing concrete examples that reinforce the theoretical characterizations established in the preceding subsections.

\begin{table}[!hbt]
    \centering
    \begin{tabular}{ccl}
    \hline
       $a$  & $b$  & Oddly-good integers in $OG_{(a,b)}$\\
         \hline 
         $1$ & $2$ & $1,
3 ,
9 ,
11 ,
19 ,
27 ,
33 ,
43 , \dots$\\
$1$ & $3$ & $1, 2,
4 ,
7 ,
14 ,
19 ,
28 ,
31 ,
37 ,
38 ,
43 ,
49 , \dots$\\
$1$ & $4$ & $1,
5 ,
13 ,
25 ,
29 ,
37 ,
41 , \dots$\\
$1$ & $5$ & $1,2,
3 ,
6 ,
7 ,
9 ,
14 ,
18 ,
21 ,
23 ,
27 ,
29 ,
42 ,
43 ,
46 ,
47 ,
49 , \dots$\\
$1$ & $6$ & $1,
7 ,
11 ,
29 ,
31 ,
49 , \dots$
\\
$2$ & $3$ & $1,
5 ,
7 ,
11 ,
25 ,
31 ,
35 ,
49 , \dots$\\
$2$ & $5$ & $1,
7 ,
11 ,
19 ,
23 ,
37 ,
41 ,
47 ,
49 ,\dots$\\
$3$ & $4$ & $1,
7 ,
13 ,
19 ,
31 ,
43 ,
49 , \dots$\\
$3$ & $5$ & $1,2,
4 ,
8 ,
19 ,
23 ,
31 ,
38 ,
46 ,
47 , \dots$\\
$4$ & $5$ & $1,
3 ,
7 ,
9 ,
21 ,
23 ,
27 ,
43 ,
47 ,
49 , \dots$\\
$5$ & $6$ & $1,
11 ,
23 ,
31 ,
43 ,
47 ,\dots$\\
         \hline 
    \end{tabular}
    \caption{Oddly-good integers}\label{T2}
    \label{tab:placeholder}
\end{table}

\begin{table}[!hbt]
    \centering
    \begin{tabular}{ccl}
    \hline
       $a$  & $b$  & Evenly-good integers in $EG_{(a,b)}$\\
         \hline 
         $1$ & $2$ & $1,
5 ,
13 ,
17 ,
25 ,
29 ,
37 ,
41 ,\dots$\\
$1$ & $3$ & $1,2,
5 ,
10 ,
17 ,
25 ,
29 ,
34 ,
41 ,\dots $\\
$1$ & $4$ & $1,
17 ,\dots $\\
$1$ & $5$ & $ 1,2,
13 ,
17 ,
26 ,
34 ,
37 ,
41 , \dots$\\
$1$ &  $6$ & $1,
13 ,
17 ,
37 ,
41 , \dots$\\
$2$ & $3$ &  $1,
13 ,
17 ,
37 ,
41 , \dots$\\
$2$ & $5$ & $1,
17 ,
29 ,\dots$\\
$3$ &  $4$ & $1,
5 ,
17 ,
25 ,
29 ,
41 , \dots$\\
$3$ & $5$ & $1,2,
13 ,
17 ,
26 ,
29 ,
34 ,
37 ,
41 ,\dots $\\

$4$ & $5$ & $1,
13 ,
17 ,
37 ,
41 , \dots $\\
$5$ & $6$ & $1,
17 ,
41 , \dots$\\
         \hline 
    \end{tabular}
    \caption{Evenly-good integers}\label{T3}
    \label{tab:placeholder}
\end{table}

 \section{Applications} \label{sec4}
 
 This section presents a collection of applications of good integers and oddly-good integers in the context of coding theory. These applications arise in various structural studies of polynomials and cyclic codes over finite fields. In particular, we explore their roles in the characterization of self-reciprocal irreducible monic (SRIM) and self-conjugate-reciprocal  irreducible monic (SCRIM) factors of the polynomial $x^n - 1$ over finite fields, which are central to the algebraic analysis of cyclic codes. Furthermore, good integers  and oddly-good integers are applied to the construction,  classification, and enumeration of self-dual and complementary dual cyclic codes  over finite fields under both the  Euclidean and Hermitian inner products.

For the purpose of the applications discussed in this section, it is important to emphasize the characteristic of the underlying finite fields. Throughout this section, let $p$ be a prime. The notations $\mathbb{F}_{p^m}$ and $\mathbb{F}_{p^{2m}}$ are used to denote finite fields of characteristic $p$ with $p^m$ and $p^{2m}$ elements, respectively. These fields serve as the ambient spaces for the construction and analysis of cyclic codes, particularly when distinguishing between the  Euclidean inner product  and the Hermitian inner product.  Precisely, the following results   illustrate           applications of   $G_{(p^{m},1)}$ and   $OG_{(p^{m},1)}$ in coding theory, where $p$ is a prime and $m$ is a positive integer.

 \subsection{Cyclic Codes and Factorization of $x^n-1$ over Finite Fields}

In this subsection, we provide a concise overview of the factorization of the polynomial $x^n - 1$ over a finite field $\mathbb{F}_{p^m}$, which plays a central role in the study of cyclic codes of length $n$ over $\mathbb{F}_{p^m}$. We also examine the algebraic structure of cyclic codes as ideals in the quotient ring $\mathbb{F}_{p^m}[x]/\langle x^n - 1 \rangle$. These foundational concepts are essential for understanding the properties of specific families of polynomials and cyclic codes that will be explored in the subsequent subsections.

\subsubsection{Factorization of $x^n-1$ over Finite Fields}

  Let $p$ be a prime and let $n$ be a positive integer and write $n=Np^t$, where $N$ is a positive integer such that $p\nmid N$ and $t\geq 0$ is an integer.  For an element $a\in \mathbb{Z}_{N}$,  let ${\rm ord}(a)$  denote the additive order of $a$ in $\mathbb{Z}_{N}$.   For  $a\in \mathbb{Z}_{N}$, the $p^m$-cyclotomic coset of $\mathbb{Z}_{N}$ containing $a$ is defined to be the set 
\begin{align}
C_q(a)=\{p^{mi}a \mid i\in \{0,1,2,\dots\}  \},
\end{align}
 where  $p^{mi}a =\sum_{j=0}^{p^{mi}} a \in \mathbb{Z}_{N}$.  Alternatively,  we may view $p^{mi}a $ as an integer modulo $N$.  It is not difficult to see that  $|C_{p^{m}}(a)|= {\rm ord}_{\rm ord(a)}(p^{m})$. For each divisor $d$ of $N$, let 
\begin{align}
    A_d=\{a\in \mathbb{Z}_{N} \mid {\rm ord}(a)=d \}, 
\end{align} where the union is not necessarily  disjoint. 
Then  $|A_d|= \Phi(d)$ and the elements in $A_d$ are partitioned in to $\dfrac{\Phi(d)}{{\rm ord}_{d}(p^{m})}$  \ \ $p^{m}$-cyclotomic cosets  of the same order ${\rm ord}_{d}(q)$     (see \cite[Remark 2.5]{JLS2014}), where $\Phi$ is the Euler phi function.  It follows that   \[A_d= \bigcup_{\substack{a\in A\\  {\rm ord}(a)=d}} C_q(a).\]
For each divisor $d$ of $N$, let $\{a_{d1},  a_{d2},  \dots, a_{d\gamma_d}\}$ be a complete set of representative of the $p^m$-cyclotomic cosets in $A_d$, where    $\gamma_d = \dfrac{\Phi(d)}{{\rm ord}_{d}(p^{m})}$. 
Then  we have the following disjoint union
\begin{align}
     \mathbb{Z}_{N}=\bigcup_{d|N} A_d=\bigcup_{d|N} \bigcup _{j=1}^{\gamma_d} C_
     {p^{m}}(a_{dj}).
\end{align}

In \cite[Theorem 3.4.8 and   Theorem 3.4.11]{LS2004},  the factorization of $x^N-1$ over $\mathbb{F}_{p^m}$ can be given in terms of $p^m$-cyclotomic cosets  of  $\mathbb{Z}_N$ (cf. \cite[Equation 3]{SJLP2015}.   Let  $\alpha$ be a primitive $N$th root of unity.  For each  $a \in \mathbb{Z}_N$,  let \[f_a(x)=\prod_{i\in C_{p^{m}}(a)} (x-\alpha^i)\] be the monic polynomial determined by $C_{p^{m}}(a)$.  From \cite[Theorem 3.4.11]{LS2004}, it follows that  $f_a(x)$ is irreducible and it is the minimal polynomial of $\alpha$ and 
\begin{align} \label{facXN-1}
    x^N-1=  \prod_{d|N} \prod _{j=1}^{\gamma_d}  f_{a_{dj}} (x)
\end{align}
which implies that 
\begin{align}\label{facXn-1}
  x^n-1=  x^{Np^t}-1= (x^{N}-1) ^{p^t} =\prod_{d|N} \prod _{j=1}^{\gamma_d}  f_{a_{dj}} (x) ^{p^t}.
\end{align}

\begin{example} \label{ex5} The factorization of $x^{60}-1$ over $\mathbb{F}_3$ is given as follows. First, we note that $n=60=20\cdot 3$ and the $3$-cyclotomic cosets of $\mathbb{Z}_{20}$ are 
    $   \{ 0 \},  
    \{ 1, 3, 7, 9 \},
     \{ 2, 6, 14, 18 \},
      \{  4, 8, 12, 16 \},
    \{ 5, 15 \},
      \{ 10 \}, $ and $
    \{ 11, 13, 17, 19 \}$. Let $\alpha$ be a primitive $20$th root of unity in $\mathbb{F}_{3^4}$. Then $f_0(x)=x + 2, f_1(x)= x^4 + x^3 + 2x + 1, f_2(x)=x^4 + 2x^3 + x^2 + 2x + 1,  f_4(x)= x^4 + x^3 + x^2 + x + 1, f_5(x)=x^2 + 1, f_{10}(x)=x + 1, $ and $f_{11}(x)=x^4 + 2x^3 + x + 1$. It follows that 
    \[ x^{20}-1= (x + 2)(x^4 + x^3 + 2x + 1)(x^4 + 2x^3 + x^2 + 2x + 1)( x^4 + x^3 + x^2 + x + 1)(x^2 + 1)(x + 1)(x^4 + 2x^3 + x + 1) \]
    and 
\[ x^{60}-1=(x^{20}-1)^3= (x + 2)^3(x^4 + x^3 + 2x + 1)^3(x^4 + 2x^3 + x^2 + 2x + 1)^3( x^4 + x^3 + x^2 + x + 1)^3(x^2 + 1)^3(x + 1)^3(x^4 + 2x^3 + x + 1)^3 .\]

\end{example}
\subsubsection{Cyclic Codes}

For integers  $n$ and $0\leq k \leq n$,    a \textit{linear code} $C$ of length $n$ and dimension $k$ over $\mathbb{F}_{p^{m}}$ is defined as a $k$-dimensional subspace of  the $\mathbb{F}_{p^{m}}$-vector space $\mathbb{F}_{p^{m}}^n$. In this case,  $C$ is  referred to as an $[n,k]_{p^{m}}$ code.  
The \textit{Euclidean dual} of a linear code $C$ of length $n$ over $\mathbb{F}_{p^{m}}$ is defined to be the set
\[
C^{\perp_{E}} = \left\{ \boldsymbol{x}  \in \mathbb{F}_{p^{m}}^n \,\middle\vert\, \langle \boldsymbol{x}, \boldsymbol{c} \rangle_{E} =0 \text{ for all } \boldsymbol{c} \in C \right\},
\]
where $\langle \boldsymbol{x}, \boldsymbol{y} \rangle_{E}  =\sum_{i=0}^{n-1} x_i y_i $ is the {\em Euclidean inner product} of $ \boldsymbol{x}=
(x_0,\ldots,x_{n-1}),  
\boldsymbol{y}=
(y_0,\ldots,y_{n-1}) \in \mathbb{F}_{p^m}^n$.
It is easy to verify that $C^{\perp_{E}}$ is again a linear code of length $n$ over $\mathbb{F}_{p^{m}}$ and it satisfies the  relation $\dim(C) + \dim(C^{\perp_{E}}) = n$.
A linear code $C$ is said to be \textit{Euclidean self-orthogonal} if $C \subseteq C^{\perp_{E}}$  and  it is called a \textit{Euclidean  self-dual} code if  $C = C^{\perp_{E}}$. The code $C$ is called {\em Euclidean  complementary dual} if  $C \cap C^{\perp_{E}} =\{0\}$. In the same fashion,   the \textit{Hermitian dual} of a linear code $C$ of length $n$ over $\mathbb{F}_{p^{2m}}$ is defined to be the set
\[
C^{\perp_{H}} = \left\{ \boldsymbol{x}  \in \mathbb{F}_{p^{2m}}^n \,\middle\vert\, \langle \boldsymbol{x}, \boldsymbol{c} \rangle_{H} =0 \text{ for all } \boldsymbol{c} \in C \right\},
\]
where $\langle \boldsymbol{x}, \boldsymbol{y} \rangle_{H}  =\sum_{i=0}^{n-1} x_i y_i^{p^m} $ is the {\em Hermitian  inner product} of $ \boldsymbol{x}=
(x_0,\ldots,x_{n-1}),  
\boldsymbol{y}=
(y_0,\ldots,y_{n-1}) \in \mathbb{F}_{p^{2m}}^n$.
The Hermitian dual  $C^{\perp_{H}}$ of a linear code $C$ is    linear   and   $\dim(C) + \dim(C^{\perp_{H}}) = n$.
A linear code $C$ is said to be \textit{Hermitian self-orthogonal} if $C \subseteq C^{\perp_{H}}$ and it is called a \textit{Hermitian self-dual} code if  $C = C^{\perp_{H}}$. The code $C$ is called {\em Hermitian   complementary dual} if  $C \cap C^{\perp_{H}} =\{0\}$. Self-dual linear codes and linear complementary dual   codes over finite fields have important practical applications in modern communication systems and are closely connected to various mathematical structures (see \cite{H2021},  \cite{H2010}, and references therein).  
Their properties provide useful tools for error detection and correction, and they also establish links with other algebraic objects studied in coding theory.

A linear code $C$ of length $n$ over $\mathbb{F}_{p^m}$ is said to be \textit{cyclic} if it is closed under the cyclic shift. Precisely,  $(c_{n-1}, c_0, \ldots, c_{n-2})\in C$ for all  $(c_0, c_1, \ldots, c_{n-1}) \in C$. Cyclic codes form an important subclass of linear codes that have been extensively investigated due to their rich algebraic structure and wide range of theoretical and practical applications (see \cite{H2021} and \cite{H2010}). 
One of their notable advantages is that they can be efficiently encoded and decoded using shift registers, which makes them particularly attractive for implementation in communication and storage systems.
Subsequently, cyclic codes can be analyzed effectively using tools from polynomial algebra, thereby allowing algebraic techniques to be applied directly in their study.
Precisely,     every cyclic code of length $n$ over $\mathbb{F}_{p^m}$ corresponds to an ideal in the quotient ring $\mathbb{F}_{p^m}[x]/\langle x^n - 1 \rangle$ via the map $\pi: \mathbb{F}_{p^m}^n \to \mathbb{F}_q[x]/\langle x^n - 1 \rangle$ defined by $(a_0,a_1,\dots, a_{n-1}\mapsto a_0+a_1x+\dots+a_{n-1}x^{n-1}$ (see  \cite{H2010}). Each such ideal is principal and generated by a unique monic polynomial $g(x)$  which is a divisor of  $x^n - 1$. This polynomial is referred to as the \textit{generator polynomial} of the cyclic code $C$, and it encapsulates all the algebraic structure of $C$.  The following  well-known   theorems (see, e.g.,   \cite{LS2004} and \cite{H2010})  are useful in the study of the algebraic structures  of cyclic codes.

\begin{theorem} Let $p$ be a prime and let $n$ be a positive integer and write $n=Np^t$, where $N$ is a positive integer such that $p\nmid N$ and $t\geq 0$ is an integer.  Assume that $x^n-1$ has  the  factorization as in 
\eqref{facXn-1}.  Then each cyclic code of length $n$ over $\mathbb{F}_{p^m}$ has the generator polynomial of the form
\[ g(x)=    \prod_{d|N} \prod _{j=1}^{\gamma_d}  f_{a_{dj}} (x) ^{\ell_{d_j}},\]
where $0\leq \ell_{d_j }<p^t$. 
\end{theorem}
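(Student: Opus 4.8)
The plan is to exploit the standard correspondence between cyclic codes and ideals of the quotient ring $R=\mathbb{F}_{p^m}[x]/\langle x^n-1\rangle$, combined with the explicit factorization \eqref{facXn-1}. First I would invoke the map $\pi$ introduced above, which identifies cyclic codes of length $n$ over $\mathbb{F}_{p^m}$ with ideals of $R$; thus it suffices to describe the monic generators of such ideals and then translate the divisor structure of $x^n-1$ into the claimed product form.

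Next I would establish that $R$ is a principal ideal ring. Since $\mathbb{F}_{p^m}[x]$ is a principal ideal domain, any ideal $I$ of $R$ is the image of an ideal of $\mathbb{F}_{p^m}[x]$ containing $\langle x^n-1\rangle$, hence is generated by a single monic polynomial $g(x)$. Concretely, I would take $g(x)$ to be the monic polynomial of least degree whose image lies in $I$, and then apply the division algorithm in $\mathbb{F}_{p^m}[x]$ twice: dividing an arbitrary element of $I$ by $g(x)$ shows every element of $I$ is a multiple of $g(x)$, and dividing $x^n-1$ by $g(x)$ shows $g(x)\mid x^n-1$ (otherwise the nonzero remainder would be an element of $I$ of smaller degree, contradicting minimality). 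This $g(x)$ is the unique monic generator, i.e.\ the generator polynomial of the code.

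The remaining step is to read off the shape of an arbitrary monic divisor of $x^n-1$ from \eqref{facXn-1}. Because $p\nmid N$, the $N$th roots of unity are pairwise distinct, so the polynomials $f_{a_{dj}}(x)$ are distinct monic irreducibles (by \cite[Theorem 3.4.11]{LS2004}), and the identity $x^n-1=(x^N-1)^{p^t}$ exhibits each of them with multiplicity exactly $p^t$. By unique factorization in $\mathbb{F}_{p^m}[x]$, every monic divisor of $x^n-1$, and in particular every generator polynomial, must therefore take the form $\prod_{d\mid N}\prod_{j=1}^{\gamma_d} f_{a_{dj}}(x)^{\ell_{dj}}$ with each exponent bounded by the multiplicity $p^t$. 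This yields the asserted form and completes the argument.

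I expect the only genuine subtlety to be the principal-ideal argument, namely the division-algorithm step establishing simultaneously that $g(x)$ divides $x^n-1$ and that it generates the whole ideal; everything else is bookkeeping against the factorization \eqref{facXn-1}. One minor point worth flagging is the exponent range: the multiplicity argument naturally gives $0\le \ell_{dj}\le p^t$, with the extreme choice $\ell_{dj}=p^t$ for all factors corresponding to the generator $g(x)=x^n-1$ of the zero code; the stated bound $0\le \ell_{dj}<p^t$ should be read in light of this convention.
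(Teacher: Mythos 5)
Your proposal is correct and follows exactly the standard argument that the paper implicitly relies on: the paper states this theorem without proof, citing it as well-known from \cite{LS2004} and \cite{H2010}, and the cited proofs proceed precisely via the ideal correspondence, the minimal-degree monic generator obtained from the division algorithm, and unique factorization against \eqref{facXn-1}. Your closing remark about the exponent range is a genuine catch rather than a mere convention: the correct bound is $0\leq \ell_{d_j}\leq p^t$, since the strict inequality as printed would, for $t=0$, force every $\ell_{d_j}=0$ and leave only the trivial code, and even for $t\geq 1$ it would wrongly exclude nonzero codes in which some (but not all) factors appear with full multiplicity $p^t$.
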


\begin{theorem} \label{genC}
Let $C$ be a cyclic code of length $n$ over $\mathbb{F}_{p^m}$  with the generator  polynomial $g(x)$, and let $k$ be an integer such that $0 \leq k \leq n$. Then $C$ has dimension $k$ if and only if $\deg(g(x)) = n - k$.
\end{theorem}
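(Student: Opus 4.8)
The plan is to prove the dimension formula for cyclic codes by exploiting the correspondence between cyclic codes and ideals in the quotient ring $R = \mathbb{F}_{p^m}[x]/\langle x^n-1\rangle$, which was established just before the statement. Recall that a cyclic code $C$ is a principal ideal $\langle g(x)\rangle$ in $R$, where the generator polynomial $g(x)$ is the unique monic divisor of $x^n-1$ of least degree in $C$. The goal is to show that $\dim_{\mathbb{F}_{p^m}}(C) = k$ if and only if $\deg(g(x)) = n-k$, so it suffices to establish the single identity $\dim_{\mathbb{F}_{p^m}}(C) = n - \deg(g(x))$.

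First I would set $r = \deg(g(x))$ and exhibit an explicit $\mathbb{F}_{p^m}$-basis for $C$. The natural candidate is the set of cyclic shifts of the generator, namely the images in $R$ of
\[
g(x),\; x\,g(x),\; x^2 g(x),\; \dots,\; x^{n-r-1}g(x).
\]
The claim is that these $n-r$ polynomials form a basis of $C$ as a vector space over $\mathbb{F}_{p^m}$. To verify spanning, I would take an arbitrary element $c(x)\in C$; since $C=\langle g(x)\rangle$ we may write $c(x)=a(x)g(x)$ in $R$ for some $a(x)$, and because $g(x)$ divides $x^n-1$ the division algorithm in $\mathbb{F}_{p^m}[x]$ allows us to reduce $a(x)$ modulo $(x^n-1)/g(x)$ so that the representative polynomial $c(x)g(x)$ has degree strictly less than $n$; writing out this product then expresses $c(x)$ as an $\mathbb{F}_{p^m}$-linear combination of the listed shifts. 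For linear independence, I would observe that $g(x)$ is monic of degree $r$, so $x^i g(x)$ has leading term $x^{i+r}$; as $i$ ranges over $0,1,\dots,n-r-1$, the degrees $r, r+1, \dots, n-1$ are all distinct and all strictly less than $n$, so no nontrivial linear combination can vanish (the highest-degree term cannot cancel), and the representatives in $R$ coincide with the genuine polynomials of degree $<n$.

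Having produced a basis of size $n-r$, I conclude $\dim_{\mathbb{F}_{p^m}}(C) = n-r = n-\deg(g(x))$, from which the stated equivalence $\dim(C)=k \iff \deg(g(x))=n-k$ follows immediately by rearrangement. I expect the main obstacle to be the spanning argument rather than the independence argument: one must be careful that the multiplication is taking place in the quotient ring $R$, so when $a(x)g(x)$ naively has degree $\geq n$ one needs the reduction modulo $x^n-1$ to remain inside $C$ and to land in the span of the chosen shifts. The clean way around this is to perform the degree reduction on the cofactor using $h(x) = (x^n-1)/g(x)$: since $g(x)\,h(x) \equiv 0$ in $R$, any multiple of $g(x)$ by a term $x^j$ with $j \ge n-r$ can be rewritten, via $x^{n-r}g(x) \equiv$ (lower shifts), in terms of the listed basis elements, closing the spanning argument without leaving $C$. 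The remaining step, converting the dimension identity into the biconditional of Theorem~\ref{genC}, is purely formal.
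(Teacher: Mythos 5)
Your argument is correct: it is the standard proof that the shifts $g(x), xg(x), \dots, x^{n-\deg g - 1}g(x)$ form a basis (spanning via reduction of the cofactor modulo $h(x)=(x^n-1)/g(x)$, independence via distinct leading degrees), which is exactly the argument in the references the paper cites for this well-known fact rather than proving it itself. The only blemish is a notational slip where you write ``the representative polynomial $c(x)g(x)$'' when you mean the reduced product $a'(x)g(x)$ with $\deg a'(x) < n-\deg g(x)$; the reasoning itself is sound.
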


 \subsection{SRIM   Factors of $x^n-1$ over Finite Fields and Applications}

The reciprocal polynomial of a  polynomial  $f(x)=\sum_{i=0}^kf_k x^k$  of degree $k$ in $ \mathbb{F}_{p^m}[x]$ with $f_0\ne 0$  is defined to be $f^*(x)= f_0^{-1} x^k f(\frac{1}{x})$, and it is said to be {\em self-reciprocal irreducible monic} (SRIM) if  $f(x)$ is irreducible, $f(x)=f^*(x)$, and $f_k=1$. Otherwise,  $f(x)$ and $f^*(x)$ are called a {\em reciprocal polynomial pair}. The factorization and the SRIM factors of $x^n-1$ over $\mathbb{F}_{p^m}$  are key to study  Euclidean complementary dual cyclic codes and Euclidean self-dual cyclic codes of length $n$ over  $\mathbb{F}_{p^m}$.  The details are presented in the following subsections.

 \subsubsection{SRIM   Factors of $x^n-1$ over Finite Fields}

For an element $a \in \mathbb{Z}_N$, the ${p^m}$-cyclotomic coset $C_{p^m}(a)$   is said to be of \emph{type} $I$ if 
$
C_{p^m}(a) = C_{p^m}(-a),
$
and of \emph{type} $II$ otherwise.  
According to   \cite[Lemma 3.4]{BJ2021-s},  \cite[Section 4.1]{CLZ2016}, \cite[Lemma~4.8]{JLLX2012}, and \cite[Lemma~3]{SJLP2015}, the complete characterization of the  SRIM  factors of the polynomial $x^n - 1$ over the finite field $\mathbb{F}_{p^m}$ can be expressed in terms of ${p^m}$-cyclotomic cosets of $\mathbb{Z}_N$. In particular, SRIM factors correspond precisely to those cyclotomic cosets of  type $I$, while type~$II$ cyclotomic  cosets generate pairs of reciprocal irreducible factors.  The results are summarized as follows.

\begin{lemma} \label{lem:f*} Let $p$ be a prime power and let $N$ be a positive integer  such that $p\nmid N$.  Let $a\in \mathbb{Z}_N$.  Then  $f(x)$ is induced by  $C_{p^m}(a) $  if and only if   $f^*(x)$ is induced by  $C_{p^m}(-a) $. 
    
\end{lemma}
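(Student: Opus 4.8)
The plan is to compute the reciprocal polynomial $f^*(x)$ explicitly in terms of the roots of $f(x)$ and then match those roots against the cyclotomic coset $C_{p^m}(-a)$. Throughout, I write $f=f_a$ for the monic polynomial induced by $C_{p^m}(a)$, so that $f_a(x)=\prod_{i\in C_{p^m}(a)}(x-\alpha^i)$, where $\alpha$ is a fixed primitive $N$th root of unity in a suitable extension of $\mathbb{F}_{p^m}$. Since every root $\alpha^i$ is a root of unity and hence nonzero, the constant term $f_0=(-1)^{\deg f}\prod_{i\in C_{p^m}(a)}\alpha^i$ is nonzero, so $f^*$ is well defined.

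First I would record the general fact that, for a monic polynomial $f(x)=\prod_j(x-r_j)$ with every $r_j\neq 0$, the reciprocal $f^*(x)=f_0^{-1}x^{\deg f}f(1/x)$ is again monic and factors as $f^*(x)=\prod_j(x-r_j^{-1})$. This is a short computation: substituting $1/x$ and clearing denominators produces $f_0^{-1}\prod_j(1-r_jx)$, and pulling the scalar $-r_j$ out of each factor contributes exactly $\prod_j(-r_j)=f_0$, which cancels the normalization $f_0^{-1}$. Hence the roots of $f^*$ are precisely the inverses of the roots of $f$, and $f^*$ is monic of the same degree.

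Applying this to $f=f_a$, the roots of $f_a^*$ are $\{\alpha^{-i}:i\in C_{p^m}(a)\}$. The key step is the coset identity $-C_{p^m}(a)=C_{p^m}(-a)$ in $\mathbb{Z}_N$: since $C_{p^m}(a)=\{p^{mj}a:j\geq 0\}$ and $-p^{mj}a=p^{mj}(-a)$, negation commutes with multiplication by $p^m$, so the map $i\mapsto -i$ carries the coset of $a$ bijectively onto the coset of $-a$. Using $\alpha^{-i}=\alpha^{(-i)\bmod N}$, the roots of $f_a^*$ are therefore exactly $\{\alpha^{i'}:i'\in C_{p^m}(-a)\}$, i.e. the roots of $f_{-a}$. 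As $f_a^*$ and $f_{-a}$ are both monic of the same degree and share the same root set, they coincide: $f_a^*(x)=f_{-a}(x)$.

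Finally I would invoke the involutivity of the reciprocal operation on monic polynomials with nonzero constant term, namely $(f^*)^*=f$, to turn the computed identity into the stated equivalence: $f$ is induced by $C_{p^m}(a)$ (that is, $f=f_a$) if and only if $f^*=f_a^*=f_{-a}$, i.e. $f^*$ is induced by $C_{p^m}(-a)$. I do not anticipate a genuine obstacle here; the only points requiring care are the monic normalization in the reciprocal, so that $f_a^*$ emerges monic rather than a nonzero scalar multiple of $f_{-a}$, and the verification that $|C_{p^m}(a)|=|C_{p^m}(-a)|$ so that the degree match is legitimate, which again follows from the bijection $i\mapsto -i$ between the two cosets.
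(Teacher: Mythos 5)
Your proof is correct, and it is the standard argument: the paper itself states Lemma~\ref{lem:f*} without proof (deferring to the cited references), and the argument given there is precisely this one --- the roots of $f^*$ are the inverses of the roots of $f$, and negation commutes with multiplication by $p^m$ so that $-C_{p^m}(a)=C_{p^m}(-a)$. Your attention to the monic normalization and the degree/root-set matching (the roots are distinct since the coset elements are distinct residues modulo $N$ and $\alpha$ has order $N$) covers the only points where care is needed.
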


\begin{lemma} \label{charFacSrim} Let $p$ be a prime power and let $N$ be a positive integer  such that $p\nmid N$.   Let $a\in \mathbb{Z}_N$   and let $f_a(x)$ be  the monic irreducible polynomial  over $\mathbb{F}_{p^{m}}$ induced by  $C_{p^m}(a)$.  Then the following statements are equivalent.
\begin{enumerate}
    \item  $f_a(x)$ is  a SRIM factor of $x^N-1$.
    \item $C_{p^m}(a) $  is of type $I$.
    \item ${\rm ord}(a) \in G_{({p^m},1)}$.
\end{enumerate}
\end{lemma}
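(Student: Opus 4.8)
The plan is to prove the chain $(1)\Leftrightarrow(2)\Leftrightarrow(3)$, first disposing of $(1)\Leftrightarrow(2)$ by a direct appeal to Lemma~\ref{lem:f*}, and then reducing the coset condition in $(2)$ to an explicit divisibility statement that can be matched against the definition of $G_{(p^m,1)}$.

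For $(1)\Leftrightarrow(2)$, since $f_a(x)$ is monic and irreducible by hypothesis, being a SRIM factor of $x^N-1$ amounts to the single condition $f_a(x)=f_a^*(x)$. By Lemma~\ref{lem:f*}, $f_a^*(x)$ is precisely the monic irreducible polynomial $f_{-a}(x)$ induced by $C_{p^m}(-a)$, so $f_a=f_a^*$ holds if and only if $f_a=f_{-a}$. Because $p\nmid N$, the $N$th roots of unity are distinct, whence the polynomials attached to distinct $p^m$-cyclotomic cosets have disjoint root sets and are pairwise distinct irreducible factors of $x^N-1$. Thus $f_a=f_{-a}$ forces $C_{p^m}(a)=C_{p^m}(-a)$, and conversely equal cosets yield equal polynomials; this is exactly the assertion that $C_{p^m}(a)$ is of type~$I$.

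For $(2)\Leftrightarrow(3)$, I would translate the equality $C_{p^m}(a)=C_{p^m}(-a)$ into the membership $-a\in C_{p^m}(a)$, i.e.\ the existence of $i\geq 0$ with $-a\equiv p^{mi}a \pmod N$, equivalently $N\mid(p^{mi}+1)a$. Writing $g=\gcd(N,a)$, $N=gN'$, and $a=ga'$ with $\gcd(N',a')=1$, this divisibility collapses after cancelling $g$ and using $\gcd(N',a')=1$ to $N'\mid p^{mi}+1$, where $N'=N/\gcd(N,a)=\mathrm{ord}(a)=d$. Hence $(2)$ is equivalent to the existence of $i\geq 0$ with $d\mid p^{mi}+1$. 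By definition $d\in G_{(p^m,1)}$ means $d\mid (p^m)^k+1^k=p^{mk}+1$ for some $k\geq 1$, so $(3)$ is the same statement as $(2)$ apart from allowing $i=0$. To reconcile the two, note that $\gcd(p^m,d)=1$ (as $d\mid N$ and $p\nmid N$), so $p^{mi}\bmod d$ is periodic in $i$ with period $T=\mathrm{ord}_d(p^m)\geq 1$; thus any witness $i\geq 0$ also yields the witness $i+T\geq 1$, and the converse inclusion is immediate.

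The step I expect to be the main obstacle is the bookkeeping in $(2)\Leftrightarrow(3)$: one must pass correctly between the additive order $\mathrm{ord}(a)$ and the modulus in the divisibility condition through the $\gcd$ reduction, and then absorb the mismatch between ``$i\geq 0$'' arising from coset membership and ``$k\geq 1$'' in the good-integer definition. The periodicity argument is the clean device that handles the $i=0$ edge case uniformly, sparing a separate case analysis of the small divisors $d\in\{1,2\}$.
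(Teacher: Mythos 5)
Your proof is correct, and it follows the standard route: reduce SRIM-ness of the irreducible factor $f_a$ to the type~$I$ condition $C_{p^m}(a)=C_{p^m}(-a)$ via Lemma~\ref{lem:f*} and separability of $x^N-1$, then translate that into $\mathrm{ord}(a)\mid p^{mi}+1$ for some $i$, which is the definition of membership in $G_{(p^m,1)}$ up to the $i\ge 0$ versus $k\ge 1$ mismatch that your periodicity argument cleanly absorbs. The paper itself states this lemma without proof, deferring to the cited references, whose arguments proceed along essentially the same lines as yours.
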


Based on the rearrangement of the factors in \eqref{facXN-1} and the characterization in Lemma \ref{charFacSrim},  we have the following factorization which is useful for the further  study of  Euclidean self-dual  and Euclidean complementary dual cyclic codes over $\mathbb{F}_{p^m}$.

\begin{align} \label{refacXN-1}
    x^N-1=  \prod_{\substack{d|N\\ d\in G_{({p^m},1)}}} \prod _{j=1}^{\gamma_d}  f_{a_{dj}} (x) \prod_{\substack{d|N\\ d\notin G_{({p^m},1)}}} \prod _{j=1}^{\gamma_d/2}  f_{a_{dj}} (x) f_{a_{dj}} ^*(x),
\end{align}
where  $f_{a_{dj}} ^*(x)= f_{-a_{dj}} (x)$ by Lemma \ref{lem:f*}. Consequently, if $n={Np^t}$, we have 
\begin{align}\label{refacXn-1}
  x^n-1=  x^{Np^t}-1=  \prod_{\substack{d|N\\ d\in G_{({p^m},1)}}} \prod _{j=1}^{\gamma_d}  f_{a_{dj}} (x) ^{p^t} \prod_{\substack{d|N\\ d\notin G_{({p^m},1)}}} \prod _{j=1}^{\gamma_d/2}  f_{a_{dj}} (x)^{p^t} f_{a_{dj}} ^*(x) ^{p^t}.
\end{align}

\begin{example} \label{ex6}
    From Example \ref{ex5},  the $3$-cyclotomic cosets of $\mathbb{Z}_{20}$ are 
    $   \{ 0 \},  
    \{ 1, 3, 7, 9 \},
     \{ 2, 6, 14, 18 \},
      \{  4, 8, 12, 16 \},
    \{ 5, 15 \},
      \{ 10 \}, $ and $
    \{ 11, 13, 17, 19 \}$.  
    Then    ${\rm ord}(0)=1 \in G_{(3,1)},
    {\rm ord}(1)=20 \notin G_{(3,1)}, 
    {\rm ord}(2)=10 \in G_{(3,1)},
    {\rm ord}(4)=5 \in G_{(3,1)},
    {\rm ord}(5)=4 \in G_{(3,1)},
    {\rm ord}(10)=2 \in G_{(3,1)},$ and $ {\rm ord}(11)=20 \notin G_{(3,1)}$. It follows that 
      $f_0(x)=x + 2, f_2(x)=x^4 + 2x^3 + x^2 + 2x + 1,  f_4(x)= x^4 + x^3 + x^2 + x + 1, f_5(x)=x^2 + 1, f_{10}(x)=x + 1 $ are SRIM,  but    $f_1(x)= x^4 + x^3 + 2x + 1$ and $f_{11}(x)=x^4 + 2x^3 + x + 1$ form a reciprocal polynomial pair. 
      It follows that 
    \[ x^{20}-1= (x + 2)(x^4 + 2x^3 + x^2 + 2x + 1)( x^4 + x^3 + x^2 + x + 1)(x^2 + 1)(x + 1)  \left((x^4 + x^3 + 2x + 1)(x^4 + 2x^3 + x + 1)\right) \]
    and 
\[ x^{60}-1 = (x + 2)^3(x^4 + 2x^3 + x^2 + 2x + 1)^3( x^4 + x^3 + x^2 + x + 1)^3(x^2 + 1)^3(x + 1)^3 \left((x^4 + x^3 + 2x + 1)^3(x^4 + 2x^3 + x + 1)^3 \right),\] where the first five terms are SRIM and  the last two terms form a reciprocal polynomial pair.
\end{example}

   \subsubsection{Euclidean Complementary Dual Cyclic Codes over Finite Fields}
\label{secECD}
In \cite{YM1994}, complementary dual  cyclic codes defined under the  Euclidean  inner product over finite fields were first introduced and studied. 
Such codes are  defined to be  cyclic codes  $C$ whose intersection with their Euclidean dual is trivial, i.e., 
$ C \cap C^{\perp_E} = \{ {0}\} $.
These codes possess elegant algebraic structures and have found wide applications in various fields, including adder-channel communications and the construction of quantum error-correcting codes.

Here, the algebraic structure, characterization, and construction of  such  cyclic codes  are revisited by employing the SRIM  factors of the polynomial $x^N - 1$ together with the framework of good integers.  
Furthermore, the explicit number of    Euclidean complementary dual cyclic codes over finite fields is presented.

\begin{theorem}[{\cite[Theorem 7.3.7]{LS2004}}] \label{genED}
Let $C$ be a cyclic code of length $n$ over $\mathbb{F}_{p^m}$  with the generator  polynomial $g(x)$.  Then the  Euclidean dual $C^{\perp_{E}}$ of $C$   is $h^*(x)$, where $h(x)= \dfrac{x^n-1}{g(x)}$.
\end{theorem}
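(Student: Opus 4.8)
The plan is to prove the precise statement that $C^{\perp_E} = \langle h^*(x)\rangle$, where $h(x) = (x^n-1)/g(x)$ is the check polynomial, by combining a dimension count with a single inclusion. First I would record the two relevant dimensions. Since $g(x)$ is the generator polynomial of $C$, Theorem~\ref{genC} gives $\dim C = n - \deg g = \deg h$, and hence $\dim C^{\perp_E} = n - \dim C = \deg g$. For the candidate code I must first check that $h^*(x)$ is itself a divisor of $x^n-1$, so that $\langle h^*(x)\rangle$ is a genuine cyclic code of predictable dimension. From $g(x)h(x) = x^n-1$, the multiplicativity of the reciprocal map together with the self-reciprocity $(x^n-1)^* = x^n-1$ (a one-line check from the definition of $f^*$) yields $g^*(x)h^*(x) = x^n-1$. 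Because $h(0)\neq 0$, as $h \mid x^n-1$ and $x \nmid x^n-1$, we have $\deg h^* = \deg h$, so $\langle h^*(x)\rangle$ is a cyclic code of dimension $n - \deg h^* = \deg g$. Thus the two codes share the same dimension, and it remains only to establish one inclusion.

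The central tool, and the step I expect to require the most care, is the translation of Euclidean orthogonality into a divisibility condition in $\mathbb{F}_{p^m}[x]/\langle x^n-1\rangle$. For $b(x) = \sum_{i=0}^{n-1} b_i x^i$, write $\overline{b}(x) := b(x^{-1}) \bmod (x^n-1) = \sum_i b_i x^{n-i}$ for its reversal, using that $x$ is a unit with $x^{-1} = x^{n-1}$. The key identity I would establish is that, for any $a(x)$ and $b(x)$, the coefficient of each power $x^t$ in the product $a(x)\overline{b}(x) \bmod (x^n-1)$ equals the Euclidean inner product of $b$ with a corresponding cyclic shift of $a$. Granting this, a vector $b$ is Euclidean-orthogonal to every codeword of $C = \langle g(x)\rangle$ if and only if $b$ is orthogonal to every cyclic shift of $g$, which happens if and only if all coefficients of $g(x)\overline{b}(x)$ vanish, that is, $g(x)\overline{b}(x) \equiv 0 \pmod{x^n-1}$. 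I would prove the identity by expanding the product coefficientwise; the only delicate point is tracking the indices modulo $n$ and matching them to the cyclic shifts.

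With this criterion in hand, checking that $h^*(x) \in C^{\perp_E}$ is short. From $h^*(x) = h_0^{-1} x^{\deg h} h(1/x)$ one reads off $h^*(x^{-1}) = h_0^{-1} x^{-\deg h} h(x)$, whence
\[
g(x)\,\overline{h^*}(x) \equiv g(x)h^*(x^{-1}) = h_0^{-1} x^{-\deg h}\, g(x)h(x) = h_0^{-1} x^{-\deg h}\,(x^n-1) \equiv 0 \pmod{x^n-1}.
\]
By the criterion this shows $h^*(x) \in C^{\perp_E}$; since the cyclic shift permutes coordinates and so preserves the Euclidean inner product, $C^{\perp_E}$ is itself a cyclic code (an ideal), and therefore the entire ideal $\langle h^*(x)\rangle$ lies in $C^{\perp_E}$. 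Combining this inclusion with the equality $\dim \langle h^*(x)\rangle = \deg g = \dim C^{\perp_E}$ forces $C^{\perp_E} = \langle h^*(x)\rangle$, as claimed. The main obstacle is entirely the correlation identity of the second paragraph --- getting the reversal and the shift bookkeeping exactly right --- after which the reciprocal manipulation and the dimension argument are routine.
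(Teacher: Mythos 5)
Your argument is correct and complete: the correlation identity reducing Euclidean orthogonality to the condition $g(x)\overline{b}(x)\equiv 0 \pmod{x^n-1}$, the verification that $g^*(x)h^*(x)=x^n-1$ so that $\langle h^*(x)\rangle$ is a cyclic code of dimension $\deg g$, and the final dimension count together give exactly the standard textbook proof of this fact. The paper itself supplies no proof, citing \cite[Theorem~7.3.7]{LS2004} directly, and your argument matches the one found there, so there is nothing to flag.
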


 Since $C\cap C^{\perp_E} $ is a cyclic code  generated by  $LCM(g(x),h^*(x)$, $C$ is Euclidean complementary dual if and only if    $LCM(g(x),h^*(x)) = x^n-1$, or equivalently,  $GCD(g(x),h^*(x)) = 1$.  Hence, the next theorem follows.

\begin{theorem}[{\cite[Theorem]{YM1994}}]  Let $p$ be a prime and let $n$ be a positive integer and write $n=Np^t$, where $N$ is a positive integer such that $p\nmid N$ and $t\geq 0$ is an integer. 
    Let $C$ be a cyclic code of length $n$ over $\mathbb{F}_{p^m}$ with generator polynomial $g(x)$. Then $C$ is Euclidean complementary dual if and only  if $g(x)$  is self-reciprocal and each irreducible divisor has multiplicity $p^t$.
\end{theorem}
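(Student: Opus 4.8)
The plan is to reduce the complementary-dual property to the polynomial criterion recorded just above the statement: $C$ is Euclidean complementary dual if and only if $\gcd(g(x),h^*(x))=1$, where $h(x)=(x^n-1)/g(x)$. Writing $g(x)$ in terms of the irreducible factors appearing in the factorization \eqref{refacXn-1}, each raised to some exponent in $\{0,1,\dots,p^t\}$, the polynomial $h(x)$ carries the complementary exponents relative to $x^n-1$. Since $(f^k)^*=(f^*)^k$, the exponents in $h^*(x)$ are then completely determined, and the single gcd condition splits into one condition per irreducible factor.

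The decisive structural input is the action of reciprocation on these factors, supplied by Lemma~\ref{lem:f*} and Lemma~\ref{charFacSrim}: a SRIM (type~$I$) factor satisfies $f=f^*$ and is fixed by $*$, whereas type~$II$ factors come in reciprocal pairs $\{f,f^*\}$ that are swapped. I would first dispose of a SRIM factor $f=f^*$ occurring with multiplicity $e$ in $g$: it then occurs with multiplicity $p^t-e$ in $h$ and, because $f$ is fixed by $*$, also with multiplicity $p^t-e$ in $h^*$, so $\gcd(g,h^*)=1$ at $f$ amounts to $\min(e,p^t-e)=0$, that is $e\in\{0,p^t\}$. For a type~$II$ pair, if $g$ contributes $f^u(f^*)^v$ then $h$ contributes $f^{p^t-u}(f^*)^{p^t-v}$, and applying $*$ swaps the two factors so that $h^*$ contributes $f^{p^t-v}(f^*)^{p^t-u}$; the gcd condition becomes the coupled system $\min(u,p^t-v)=0$ and $\min(v,p^t-u)=0$.

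A brief case check shows that this system forces $(u,v)\in\{(0,0),(p^t,p^t)\}$: if $u>0$ the first equation gives $v=p^t$, and then the second forces $u=p^t$, with the reverse implication holding symmetrically. Collecting the per-factor conclusions, $\gcd(g,h^*)=1$ holds exactly when every irreducible factor that divides $g$ does so with multiplicity precisely $p^t$, and a type~$II$ factor divides $g$ if and only if its reciprocal partner also divides $g$. Because all occurring multiplicities then equal $p^t$, the latter stability condition is equivalent to $g=g^*$; hence the criterion is precisely that $g$ is self-reciprocal and each of its irreducible divisors has multiplicity $p^t$, as claimed. I expect the main obstacle to lie in the type~$II$ bookkeeping---one must track the exponents of both $f$ and $f^*$ through the reciprocation that interchanges them and confirm that the two simultaneous conditions eliminate every intermediate exponent, not merely one of the pair.
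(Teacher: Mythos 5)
Your proposal is correct and follows the same route the paper takes: reduce complementary duality to $\gcd(g(x),h^*(x))=1$ via Theorem~\ref{genED}, then analyze the exponents factor by factor using the SRIM/reciprocal-pair dichotomy of Lemmas~\ref{lem:f*} and~\ref{charFacSrim}. Your case analysis for the type~$II$ pairs correctly fills in the bookkeeping that the paper leaves implicit when it says the theorem ``follows'' from the gcd criterion.
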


In the case where $t=1$,  we have $\gcd(N,p^m)=1$  and the next corollary follows immediately.  
\begin{corollary}   
Let $p$ be a prime  and let $N$ be a positive integer such that $p\nmid N$. 
    Let $C$ be a cyclic code of length $N$ over $\mathbb{F}_{p^m}$ with generator polynomial $g(x)$. Then $C$ is Euclidean complementary dual if and only  if $g(x)$  is self-reciprocal.
\end{corollary}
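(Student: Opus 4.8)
The plan is to derive this corollary directly from the preceding theorem by specializing to the case $t=1$. First I would observe that when $t=1$, the hypothesis $p \nmid N$ together with the fact that $p$ is prime immediately yields $\gcd(N, p^m) = 1$, so that $x^n - 1 = x^N - 1$ has no repeated irreducible factors over $\mathbb{F}_{p^m}$. This separability is the crucial structural simplification: in the factorization \eqref{refacXN-1}, every irreducible factor of $x^N - 1$ appears with multiplicity exactly $1 = p^t$ with $t=1$, so the multiplicity condition in the theorem is automatically satisfied.

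The second step is to invoke the characterization theorem directly above. By that theorem, the cyclic code $C$ of length $N$ with generator polynomial $g(x)$ is Euclidean complementary dual if and only if $g(x)$ is self-reciprocal \emph{and} each irreducible divisor of $g(x)$ has multiplicity $p^t = p^1 = p$. Since $g(x)$ divides $x^N - 1$, which is separable, every irreducible factor of $g(x)$ has multiplicity exactly $1$. I would then note that $g(x)$ itself has multiplicity-$1$ irreducible factors, so the multiplicity condition reads ``each irreducible divisor has multiplicity $p^t$'' where $t=1$; but because $g(x) \mid x^N-1$ and $x^N-1$ is squarefree, the multiplicities are forced to be $1$. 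Here I would confirm that the intended reading is $t=1$ so the multiplicity $p^t$ collapses to the degenerate requirement that is vacuously met; the condition on multiplicities thus drops out entirely, leaving only the requirement that $g(x)$ be self-reciprocal.

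The main obstacle I anticipate is reconciling the multiplicity condition across the two statements, since the parent theorem phrases the requirement as ``multiplicity $p^t$'' while separability forces multiplicity $1$. The cleanest resolution is to apply the theorem with $n = N$ and $t = 1$, noting that in this regime $x^N - 1$ factors into distinct irreducibles (by separability, as $\gcd(N,p)=1$), so that the only cyclic codes are generated by products of distinct irreducible factors; the multiplicity clause is therefore automatically satisfied by every divisor $g(x)$ of $x^N-1$, and the criterion reduces to self-reciprocity alone. I would therefore conclude that for length $N$ with $p \nmid N$, the code $C$ is Euclidean complementary dual if and only if its generator polynomial $g(x)$ is self-reciprocal, completing the proof. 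This is essentially a bookkeeping specialization rather than a substantive argument, so I expect no genuine difficulty beyond carefully tracking how the multiplicity hypothesis becomes vacuous.
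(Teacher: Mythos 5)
Your overall strategy---specializing the preceding theorem and observing that squarefreeness of $x^N-1$ trivializes the multiplicity condition---is exactly the paper's route (the paper states the corollary follows immediately from the theorem in this degenerate case). However, there is a concrete indexing error that makes several of your intermediate assertions false. The theorem is stated for length $n = Np^t$; for the corollary the length is $N$ itself, so the relevant specialization is $t=0$, not $t=1$. With $t=0$ you get $p^t = 1$, and the requirement ``each irreducible divisor has multiplicity $p^t$'' becomes ``multiplicity $1$,'' which is automatically satisfied by every divisor of the squarefree polynomial $x^N-1$; the criterion then genuinely reduces to self-reciprocity. With $t=1$, as you write, the length would be $Np \neq N$, the equalities ``$1 = p^t$'' and ``multiplicity $p^t = p^1 = p$'' are false (since $p \geq 2$), and the multiplicity condition would demand that each irreducible divisor appear with multiplicity $p$---which is incompatible with squarefreeness rather than ``vacuously met,'' and would force the absurd conclusion that $g(x)=1$ is the only admissible generator. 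You clearly sensed this tension (``the main obstacle I anticipate is reconciling the multiplicity condition'') but resolved it by assertion rather than by correcting $t$. In fairness, the paper itself introduces this corollary with the phrase ``In the case where $t=1$,'' which is the same slip; the surrounding context makes clear that $t=0$ is intended. Replace $t=1$ by $t=0$ throughout and your argument is correct and coincides with the paper's.
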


The following theorem are direct consequence of Theorem \ref{genED}. Alternatively, they can be viewed as a lighter version of \cite[Corollary 4 and Proposition 5]{BJU2018}. 

\begin{theorem}\label{thm9}  Let $p$ be a prime and let $n$ be a positive integer and write $n=Np^t$, where $N$ is a positive integer such that $p\nmid N$ and $t\geq 0$ is an integer.   Assume the factorization in \eqref{refacXn-1} and   let $C$ be a cyclic code of length $n$ over $\mathbb{F}_{p^m}$ with generator polynomial 
\[ g(x) =  \prod_{\substack{d|N\\ d\in G_{({p^m},1)}}} \prod _{j=1}^{\gamma_d}  f_{a_{dj}} (x) ^{r_{dj} } \prod_{\substack{d|N\\ d\notin G_{({p^m},1)}}} \prod _{j=1}^{\gamma_d/2}  f_{a_{dj}} (x)^{s_{dj}} f_{a_{dj}} ^*(x) ^{s'_{dj}} .\]
    Then $C$ is Euclidean complementary dual if and only   if $ r_{dj} \in \{0,p^t\}$ for all  $d|N$ such that $d\in G_{({p^m},1)}$ and  $j=1,2,\dots, \gamma_d$, and $( s_{dj},s'_{dj}) \in \{(0,0),(p^t,p^t)\}$  for all  $d|N$ such that $d\notin G_{({p^m},1)}$ and  $j=1,2,\dots, \gamma_d/2$.
\end{theorem}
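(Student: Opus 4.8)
The plan is to reduce the complementary-dual condition to a coprimality statement about polynomials and then read it off factor by factor. By Theorem~\ref{genED}, the Euclidean dual $C^{\perp_E}$ is generated by $h^*(x)$, where $h(x)=(x^n-1)/g(x)$. As recorded in the discussion preceding Theorem~\ref{genED}, the intersection $C\cap C^{\perp_E}$ is the cyclic code generated by $\mathrm{LCM}(g(x),h^*(x))$, so $C$ is Euclidean complementary dual if and only if $\mathrm{LCM}(g(x),h^*(x))=x^n-1$, equivalently $\gcd(g(x),h^*(x))=1$. Thus it suffices to express $h^*(x)$ in the basis of irreducible factors appearing in \eqref{refacXn-1} and impose coprimality with $g(x)$.

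First I would compute $h(x)$. Since \eqref{refacXn-1} writes $x^n-1$ as the product of all the $f_{a_{dj}}(x)$ and $f_{a_{dj}}^*(x)$ each raised to the power $p^t$, dividing by $g(x)$ merely subtracts exponents, giving $f_{a_{dj}}(x)^{p^t-r_{dj}}$ on the part with $d\in G_{(p^m,1)}$ and $f_{a_{dj}}(x)^{p^t-s_{dj}}f_{a_{dj}}^*(x)^{p^t-s'_{dj}}$ on the part with $d\notin G_{(p^m,1)}$. Next I would apply the reciprocal operation, which is multiplicative (every factor here has nonzero constant term, as it divides $x^N-1$ with $\gcd(N,p)=1$), satisfies $f^{**}=f$, fixes each SRIM factor (those with $d\in G_{(p^m,1)}$, by Lemma~\ref{charFacSrim}), and interchanges the two members $f_{a_{dj}}$ and $f_{a_{dj}}^*$ of each reciprocal pair (those with $d\notin G_{(p^m,1)}$, by Lemma~\ref{lem:f*}). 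The upshot is
\[
h^*(x)=\prod_{\substack{d|N\\ d\in G_{(p^m,1)}}}\prod_{j=1}^{\gamma_d} f_{a_{dj}}(x)^{p^t-r_{dj}}\prod_{\substack{d|N\\ d\notin G_{(p^m,1)}}}\prod_{j=1}^{\gamma_d/2} f_{a_{dj}}(x)^{p^t-s'_{dj}}f_{a_{dj}}^*(x)^{p^t-s_{dj}},
\]
the essential point being that the transposition sends the exponent $p^t-s_{dj}$ onto $f_{a_{dj}}^*$ and $p^t-s'_{dj}$ onto $f_{a_{dj}}$.

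Finally, since the polynomials $f_{a_{dj}}(x)$ and $f_{a_{dj}}^*(x)$ ranging over the cosets in \eqref{refacXn-1} are pairwise distinct monic irreducibles, $\gcd(g(x),h^*(x))$ equals the product of each such irreducible raised to the minimum of its exponents in $g$ and in $h^*$. Setting this gcd equal to $1$ forces each minimum to vanish: for $d\in G_{(p^m,1)}$ this reads $\min(r_{dj},\,p^t-r_{dj})=0$, i.e. $r_{dj}\in\{0,p^t\}$; for $d\notin G_{(p^m,1)}$ it produces the two coupled conditions $\min(s_{dj},\,p^t-s'_{dj})=0$ and $\min(s'_{dj},\,p^t-s_{dj})=0$, whose simultaneous solutions are exactly $(s_{dj},s'_{dj})\in\{(0,0),(p^t,p^t)\}$. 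This is precisely the stated criterion.

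I expect the main obstacle to be the bookkeeping in the middle step, namely tracking how $*$ permutes the irreducible factors: it fixes the SRIM factors while transposing each reciprocal pair, and it is exactly this transposition that couples $s_{dj}$ with $s'_{dj}$ and thereby excludes the mixed pairs $(0,p^t)$ and $(p^t,0)$. I would verify carefully that the two coupled minima have no off-diagonal solutions and that no interior exponent survives, since this is where the asymmetry between the type~$I$ (single exponent) and type~$II$ (paired exponents) contributions is decisive.
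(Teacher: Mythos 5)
Your proposal is correct and follows essentially the same route the paper intends: it invokes Theorem~\ref{genED} together with the criterion $\gcd(g(x),h^*(x))=1$ stated just before it, computes $h^*$ factorwise using that $*$ fixes the SRIM factors and transposes each reciprocal pair, and reads off the exponent conditions. The coupled minima analysis correctly excludes the mixed pairs $(0,p^t)$ and $(p^t,0)$, so nothing is missing.
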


\begin{theorem} \label{thm10}
Let $p$ be a prime and let $n$ be a positive integer and write $n=Np^t$, where $N$ is a positive integer such that $p\nmid N$ and $t\geq 0$ is an integer. Then the number of Euclidean  complementary dual codes of length $n$ over $\mathbb{F}_{p^m}$  is 
    \[ 2^ {\displaystyle \sum_{\substack{d|N\\ d\in G_{({p^m},1)}}} \frac{\Phi(d)}{{\rm ord}_d (p^m)} + \sum_{\substack{d|N\\ d\notin G_{({p^m},1)}}}  \frac{\Phi(d)}{2{\rm ord}_d (p^m)} }.\]
\end{theorem}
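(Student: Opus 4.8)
The plan is to count the number of valid generator polynomials that produce Euclidean complementary dual cyclic codes, using the structural characterization already established in Theorem~\ref{thm9}. By Theorem~\ref{thm9}, a cyclic code $C$ with generator polynomial written in the form of the factorization \eqref{refacXn-1} is Euclidean complementary dual precisely when each exponent $r_{dj}$ (attached to an SRIM factor, i.e.\ a factor coming from a divisor $d \in G_{(p^m,1)}$) lies in $\{0, p^t\}$, and each pair of exponents $(s_{dj}, s'_{dj})$ (attached to a reciprocal pair, i.e.\ coming from $d \notin G_{(p^m,1)}$) lies in $\{(0,0), (p^t, p^t)\}$. Since distinct cyclic codes correspond bijectively to distinct generator polynomials dividing $x^n - 1$, the enumeration reduces to a product of independent binary choices.

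\medskip

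\noindent\textbf{First I would} observe that the choices are independent across all the irreducible factor slots in the rearranged factorization. For each divisor $d \mid N$ with $d \in G_{(p^m,1)}$, there are $\gamma_d$ SRIM factors indexed by $j = 1, \dots, \gamma_d$, and each contributes a free binary choice $r_{dj} \in \{0, p^t\}$, giving a factor of $2^{\gamma_d}$. For each divisor $d \mid N$ with $d \notin G_{(p^m,1)}$, there are $\gamma_d/2$ reciprocal pairs indexed by $j = 1, \dots, \gamma_d/2$, and each contributes a free binary choice $(s_{dj}, s'_{dj}) \in \{(0,0),(p^t,p^t)\}$, giving a factor of $2^{\gamma_d/2}$. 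Multiplying these independent contributions and converting the product of powers of two into a single exponent yields
\[
\prod_{\substack{d \mid N \\ d \in G_{(p^m,1)}}} 2^{\gamma_d} \;\cdot\; \prod_{\substack{d \mid N \\ d \notin G_{(p^m,1)}}} 2^{\gamma_d/2}
= 2^{\;\sum_{\substack{d \mid N \\ d \in G_{(p^m,1)}}} \gamma_d \;+\; \sum_{\substack{d \mid N \\ d \notin G_{(p^m,1)}}} \gamma_d/2}.
\]

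\medskip

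\noindent\textbf{Next I would} substitute the value $\gamma_d = \Phi(d)/{\rm ord}_d(p^m)$, which was recorded in the discussion preceding \eqref{facXN-1}. This immediately transforms the exponent into
\[
\sum_{\substack{d \mid N \\ d \in G_{(p^m,1)}}} \frac{\Phi(d)}{{\rm ord}_d(p^m)} \;+\; \sum_{\substack{d \mid N \\ d \notin G_{(p^m,1)}}} \frac{\Phi(d)}{2\,{\rm ord}_d(p^m)},
\]
which is exactly the claimed exponent, completing the count.

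\medskip

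\noindent\textbf{The main obstacle} is not the arithmetic but rather the bookkeeping point that must be verified for the counting to be legitimate: namely that, in the rearranged factorization \eqref{refacXn-1}, the SRIM factors and reciprocal pairs associated to distinct index pairs $(d,j)$ are genuinely distinct irreducible polynomials, so that the exponent choices are independent and no code is counted twice. This follows from Lemma~\ref{charFacSrim} together with the disjoint-union decomposition of $\mathbb{Z}_N$ into cyclotomic cosets, which guarantees that the $f_{a_{dj}}(x)$ are pairwise coprime irreducibles and that the $\gamma_d$ is even exactly when $d \notin G_{(p^m,1)}$ (so that type~II cosets pair up cleanly). Once this well-definedness is in hand, the enumeration is a routine product of independent binary alternatives, and the bijection between cyclic codes and their generator polynomials ensures the count of generator polynomials equals the count of codes.
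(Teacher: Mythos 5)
Your proposal is correct and follows essentially the same route the paper intends: Theorem~\ref{thm10} is presented there as a direct consequence of Theorem~\ref{thm9} (and of the cyclotomic-coset counts $\gamma_d=\Phi(d)/{\rm ord}_d(p^m)$), which is precisely the independent-binary-choice enumeration you carry out. Your added remark on the pairwise distinctness of the irreducible factors and the evenness of $\gamma_d$ for $d\notin G_{(p^m,1)}$ is a correct justification of the bookkeeping the paper leaves implicit.
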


\begin{example}
    From Example \ref{ex6},   we have the following factorization for  $x^{60}-1$ over $\mathbb{F}_3$:
\[ x^{60}-1 = (x + 2)^3(x^4 + 2x^3 + x^2 + 2x + 1)^3( x^4 + x^3 + x^2 + x + 1)^3(x^2 + 1)^3(x + 1)^3 \left((x^4 + x^3 + 2x + 1)^3(x^4 + 2x^3 + x + 1)^3 \right) ,\] where the first four terms are SRIM and  the last two terms form a reciprocal polynomial pair. Using Theorem \ref{thm9}, the following polynomials are generator polynomials  for Euclidean complementary dual  cyclic codes of length $60$ over $\mathbb{F}_3$: 
$(x + 2)^3(x^4 + 2x^3 + x^2 + 2x + 1)^3 $, $ (x^2 + 1)^3(x + 1)^3 \left((x^4 + x^3 + 2x + 1)^3(x^4 + 2x^3 + x + 1)^3 \right)$,  and $ (x + 2)^3 \left((x^4 + x^3 + 2x + 1)^3(x^4 + 2x^3 + x + 1)^3 \right)$. 
Based on Theorem  \ref{thm10}, the number of Euclidean complementary dual  cyclic codes of length $60$ over $\mathbb{F}_3$ is
\[ 2^ {\displaystyle \sum_{\substack{d|20\\ d\in G_{({3},1)}}} \frac{\Phi(d)}{{\rm ord}_d (3)} + \sum_{\substack{d|20\\ d\notin G_{({3},1)}}}  \frac{\Phi(d)}{2{\rm ord}_d (3)} }=2^{\displaystyle \sum_{d\in \{1,2,4,5,10\}} \frac{\Phi(d)}{{\rm ord }_d(3)}+\sum_{d\in \{20\}}  \frac{\Phi(d)}{2{\rm ord }_d(3)} } =2^{(1+1+1+1+1)+ 1}=2^6 .\]

\end{example}

 \subsubsection{Euclidean Self-Dual Cyclic Codes over Finite Fields}

A cyclic code $C$ of length $n$ over a finite field $\mathbb{F}_{p^m}$ is called \emph{Euclidean self-dual} if it coincides with its Euclidean dual, that is, $C = C^{\perp_E}$.  
The class of self-dual cyclic codes represents one of the most significant families of linear codes, not only because of their elegant algebraic structure but also due to their importance in practical applications such as data transmission and error correction.  
Owing to these features, this family of codes has been the subject of continuous research for several decades.  
In particular, a complete characterization together with explicit enumeration of Euclidean self-dual cyclic codes over finite fields was established in \cite{JLX2011}.  

To set the stage for further discussion, recall that if $C$ is a cyclic code of length $n$ over $\mathbb{F}_{p^m}$ with generator polynomial $g(x)$, then $C$ is Euclidean self-dual precisely when 
$
   g(x) = h^*(x),
$
where $h(x) = \frac{x^n-1}{g(x)}$ and $h^*(x)$ denotes the reciprocal polynomial of $h(x)$.  
This identity imposes strong restrictions on the parameters of the code: in fact, it can occur only in the case where both the length $n$ is even and the   underlying field has characteristic $2$.  
The precise statement of this condition is formalized in the following theorem.

\begin{theorem}[{\cite[Theorem 1]{JLX2011}}]
  \label{thm11} Let $p$ be a prime and let $n$ be a positive integer and write $n=Np^t$, where $N$ is a positive integer such that $p\nmid N$ and $t\geq 0$ is an integer. Then there exists a Euclidean self-dual cyclic code of length $n$ over $\mathbb{F}_{p^m}$ if and only if $p=2$ and $t\geq 1$. 
 \end{theorem}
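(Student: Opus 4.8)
The plan is to prove both directions by analyzing when the self-duality condition $g(x) = h^*(x)$ can hold, where $h(x) = (x^n-1)/g(x)$. The key structural fact is that self-duality forces $g(x)$ and $h^*(x)$ to partition the irreducible factors of $x^n-1$ perfectly, since $g(x)h(x) = x^n-1$ and $\deg g = \deg h^* = n/2$ (so $n$ must be even, giving a first necessary condition). First I would establish the necessity of $p=2$. Suppose a Euclidean self-dual cyclic code exists. Using Theorem~\ref{genED}, the condition $C = C^{\perp_E}$ becomes $g(x) = h^*(x)$, and multiplying gives $g(x)g^*(x) = x^n - 1$ up to the unit coming from reciprocation. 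I would examine the factorization \eqref{refacXn-1} and the behavior of SRIM factors (type~$I$ cyclotomic cosets). The obstruction comes from the SRIM factors: each SRIM factor $f_{a_{dj}}(x)$ is self-reciprocal, so if it appears in $g(x)$ with multiplicity $r$, it must appear in $h^*(x) = g(x)$ with the same multiplicity $r$, yet its total multiplicity in $x^n-1$ is $p^t$. This forces $2r = p^t$, which is only solvable when $p=2$. The factor $x-1$ (corresponding to ${\rm ord}(0)=1 \in G_{(p^m,1)}$) is always SRIM and always present, so this constraint cannot be evaded.

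Next I would handle the necessity of $t \geq 1$. Once $p=2$ is fixed, the condition $2r = 2^t$ for the SRIM factors (in particular for $x-1$) requires $t \geq 1$, since $t=0$ would force $r = 1/2$, which is impossible. This disposes of the case $n$ odd and completes the necessity direction: a Euclidean self-dual cyclic code can exist only if $p=2$ and $t\geq 1$.

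For sufficiency, I would assume $p=2$ and $t\geq 1$ and explicitly construct a self-dual code. Writing $n = N \cdot 2^t$ with $N$ odd, I would use the factorization \eqref{refacXn-1} and set multiplicities as follows: for each SRIM factor (type~$I$), assign multiplicity $2^{t-1}$ to $g(x)$; for each reciprocal pair $f_{a_{dj}}(x), f_{a_{dj}}^*(x)$ (type~$II$), place one member entirely in $g(x)$ with multiplicity $2^t$ and the other entirely in $h(x)$. One then checks directly that $h^*(x)$, obtained by reciprocating $h(x)$, recovers exactly $g(x)$: the SRIM factors return to multiplicity $2^{t-1}$ (since $f_{a_{dj}}^* = f_{a_{dj}}$ and the complementary multiplicity in $h$ is $2^t - 2^{t-1} = 2^{t-1}$), while reciprocation swaps the members of each type~$II$ pair, matching the assignment in $g$. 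The degree bookkeeping gives $\deg g = n/2$, so the resulting code has dimension $n/2$ and satisfies $C = C^{\perp_E}$.

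The main obstacle is the sufficiency construction, specifically verifying that the chosen multiplicity pattern is \emph{fixed} under the operation $g \mapsto h^*$. This requires care in tracking how reciprocation acts on both SRIM factors (which are self-reciprocal, so only their multiplicities matter) and on reciprocal pairs (which get interchanged). The critical check is that $2^t$ being even guarantees $2^{t-1}$ is a genuine nonnegative integer, so the SRIM factors can be split evenly between $g$ and $h$; this is precisely where the hypotheses $p=2$ and $t\geq 1$ are both used. I expect the degree and dimension verification (via Theorem~\ref{genC}) to be routine once the multiplicity pattern is shown to be self-consistent.
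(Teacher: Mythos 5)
Your proposal is correct and follows essentially the same route the paper (and the cited reference \cite{JLX2011}) takes: reduce self-duality to $g(x)=h^*(x)$, observe that the SRIM factor $x-1$ of multiplicity $p^t$ forces $2r=p^t$ and hence $p=2$, $t\geq 1$, and then exhibit a generator polynomial with SRIM factors raised to the power $2^{t-1}$ as in Theorem~\ref{thm12} for sufficiency. No gaps; the multiplicity bookkeeping and the degree count $\deg g = n/2$ are exactly as in the paper's framework.
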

In this case, we have that $n=2^tN$ is even for some  odd positive integer $N$  and positive integer $t$, and the underlying field is $\mathbb{F}_{2^m}$ for some positive integer $m$.  Based on Theorem   \ref{thm11} and the fact that a cyclic code of length $n$ over $\mathbb{F}_{2^m}$ with generator polynomial $g(x)$ is Euclidean self-dual if and only if $g(x)=h^*(x)$, the next theorems follows.  

\begin{theorem}[{\cite[Theorem 2]{JLX2011}}]
\label{thm12}
  Let $n$ be an even  positive integer and write $n=N2^t$, where $N$ is an odd positive integer  and $t$ is a positive integer.
Assume the factorization in \eqref{refacXn-1}  and let $C$ be  a cyclic code  of length $n$ over $\mathbb{F}_{2^m}$ with generator  polynomial $g(x)$.  Then $C$ is Euclidean self-dual if and only if 
  \[ g(x) =  \prod_{\substack{d|N\\ d\in G_{({2^m},1}}} \prod _{j=1}^{\gamma_d}  f_{a_{dj}} (x) ^{2^{t-1} } \prod_{\substack{d|N\\ d\notin G_{({2^m},1)}}} \prod _{j=1}^{\gamma_d/2}  f_{a_{dj}} (x)^{s_{dj}} f_{a_{dj}} ^*(x) ^{p^t-s_{dj}} ,\] where $0\leq  {s_{dj}} \leq p^t$ for all   $d|N$ such that $d\notin G_{({2^m},1)}$ and  $j=1,2,\dots, \gamma_d/2$.
\end{theorem}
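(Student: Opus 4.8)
The plan is to argue directly from the self-duality criterion recalled immediately before the theorem: a cyclic code $C$ of length $n$ over $\mathbb{F}_{2^m}$ with generator polynomial $g(x)$ is Euclidean self-dual if and only if $g(x)=h^*(x)$, where $h(x)=(x^n-1)/g(x)$. Theorem~\ref{thm11} already guarantees that Euclidean self-dual cyclic codes exist only in characteristic $2$ with $t\geq 1$, so throughout I may take $p=2$ and $t\geq 1$; the latter is exactly what makes the exponent $2^{t-1}$ a bona fide nonnegative integer (and $p^t=2^t$ in the stated condition).

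First I would parametrize an arbitrary monic divisor $g(x)$ of $x^n-1=(x^N-1)^{2^t}$ in the basis furnished by the rearranged factorization \eqref{refacXn-1}. Writing
\[ g(x) =  \prod_{\substack{d|N\\ d\in G_{({2^m},1)}}} \prod _{j=1}^{\gamma_d}  f_{a_{dj}} (x) ^{r_{dj}} \prod_{\substack{d|N\\ d\notin G_{({2^m},1)}}} \prod _{j=1}^{\gamma_d/2}  f_{a_{dj}} (x)^{s_{dj}} f_{a_{dj}} ^*(x) ^{s'_{dj}}, \]
with each exponent in $\{0,1,\dots,2^t\}$, the complementary factor $h(x)=(x^n-1)/g(x)$ is obtained simply by replacing every exponent $e$ by $2^t-e$, since each irreducible factor occurs to the power $2^t$ in $x^n-1$.

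The next step is to compute $h^*(x)$ in the same basis. Here I would first record that on monic polynomials with nonzero constant term the reciprocal map is multiplicative, $(fg)^*=f^*g^*$ and $(f^e)^*=(f^*)^e$, so $h^*$ is obtained from $h$ by replacing each irreducible factor by its own reciprocal. By Lemma~\ref{charFacSrim} the factors indexed by $d\in G_{({2^m},1)}$ are precisely the SRIM (hence self-reciprocal) ones and are therefore left fixed, while by Lemma~\ref{lem:f*} the reciprocal interchanges $f_{a_{dj}}$ and $f_{a_{dj}}^*$ within each type~$II$ pair. Performing these substitutions yields an explicit product for $h^*(x)$ in which the exponent of a SRIM factor is $2^t-r_{dj}$ and the exponents of the pair $\bigl(f_{a_{dj}},f_{a_{dj}}^*\bigr)$ have been swapped to $\bigl(2^t-s'_{dj},\,2^t-s_{dj}\bigr)$.

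Finally, since $x^N-1$ is squarefree over $\mathbb{F}_{2^m}$ (as $N$ is odd), its irreducible factors are pairwise distinct, so the identity $g(x)=h^*(x)$ is equivalent to equality of exponents factor by factor. For a SRIM factor this reads $r_{dj}=2^t-r_{dj}$, forcing $r_{dj}=2^{t-1}$; for a type~$II$ pair it reads $s_{dj}=2^t-s'_{dj}$, i.e.\ $s'_{dj}=2^t-s_{dj}$, with $s_{dj}$ free in $\{0,\dots,2^t\}$. These are exactly the conditions appearing in the asserted form of $g(x)$, and since every implication used is reversible, both directions follow simultaneously. I expect the only genuine care to lie in the bookkeeping of the reciprocal swap on the type~$II$ pairs together with checking the multiplicativity and monic normalization of $(\cdot)^*$; the squarefreeness of $x^N-1$, which licenses the factor-by-factor comparison of exponents, is the structural fact that makes the exponent matching rigorous.
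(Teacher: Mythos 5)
Your proposal is correct and follows exactly the route the paper indicates: it derives the exponent conditions directly from the self-duality criterion $g(x)=h^*(x)$ applied to the rearranged factorization \eqref{refacXn-1}, using the self-reciprocity of the SRIM factors (Lemma~\ref{charFacSrim}) and the swap of reciprocal pairs (Lemma~\ref{lem:f*}), with squarefreeness of $x^N-1$ justifying the factor-by-factor comparison. The paper leaves these details implicit ("can be derived directly"), and your write-up supplies them faithfully, including the observation that the theorem's $p^t$ is just $2^t$ here.
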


\begin{theorem}[{\cite[Corollary 1 and Theorem 3]{JLX2011}}] \label{thm13}
  Let $n$ be an even  positive integer and write $n=N2^t$, where $N$ is an odd positive integer  and $t$ is a positive integer.
Then the number of  Euclidean self-dual  cyclic codes  of length $n$ over $\mathbb{F}_{2^m}$ is
\[(1+2^t) ^ {\displaystyle \sum _{ {\substack{d|N\\ d\notin G_{({2^m},1)}}}  }  \frac{\Phi(d)}{2 {\rm ord}_d(2^m)}}.\]
\end{theorem}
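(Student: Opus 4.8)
The plan is to read the count directly off the parametrization of Euclidean self-dual cyclic codes established in Theorem~\ref{thm12}. By that theorem, every Euclidean self-dual cyclic code of length $n=N2^t$ over $\mathbb{F}_{2^m}$ has a generator polynomial of the form
\[ g(x) = \prod_{\substack{d|N\\ d\in G_{(2^m,1)}}} \prod_{j=1}^{\gamma_d} f_{a_{dj}}(x)^{2^{t-1}} \prod_{\substack{d|N\\ d\notin G_{(2^m,1)}}} \prod_{j=1}^{\gamma_d/2} f_{a_{dj}}(x)^{s_{dj}} f_{a_{dj}}^*(x)^{2^t - s_{dj}}, \]
with $0\le s_{dj}\le 2^t$, and conversely every admissible choice of exponents yields such a code. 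So first I would record that this sets up a bijection between Euclidean self-dual cyclic codes and admissible exponent tuples $(s_{dj})$: since $\mathbb{F}_{2^m}[x]$ is a unique factorization domain and each cyclic code has a unique monic generator polynomial dividing $x^n-1$, distinct tuples produce distinct polynomials, hence distinct codes. Counting the codes therefore reduces to counting these tuples.

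Next I would isolate the genuine degrees of freedom. The SRIM factors, indexed by those $d\mid N$ with $d\in G_{(2^m,1)}$, occur with the \emph{fixed} multiplicity $2^{t-1}$ (this is exactly the rigidity forced by $g=h^*$ on a self-reciprocal irreducible factor, and is where the hypothesis $t\ge 1$ is used so that $2^{t-1}$ is an integer). Thus the SRIM part contributes no choice whatsoever. The only free parameters are the exponents $s_{dj}$ attached to the reciprocal pairs, namely the type~II cyclotomic cosets indexed by $d\mid N$ with $d\notin G_{(2^m,1)}$ and $j=1,\dots,\gamma_d/2$; each such $s_{dj}$ may be selected independently from the $1+2^t$ values in $\{0,1,\dots,2^t\}$, while the companion exponent is then forced to be $2^t-s_{dj}$.

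I would then count how many free parameters there are. For each $d\mid N$ with $d\notin G_{(2^m,1)}$, the set $A_d$ decomposes into $\gamma_d=\Phi(d)/{\rm ord}_d(2^m)$ cyclotomic cosets, all of type~II by Lemma~\ref{charFacSrim}; the negation map $a\mapsto -a$ acts on them without fixed points, so they pair up two-to-one, forcing $\gamma_d$ to be even and yielding exactly $\gamma_d/2$ reciprocal pairs. Summing over all such $d$ gives the total number of free parameters,
\[ \sum_{\substack{d\mid N\\ d\notin G_{(2^m,1)}}} \frac{\gamma_d}{2} = \sum_{\substack{d\mid N\\ d\notin G_{(2^m,1)}}} \frac{\Phi(d)}{2\,{\rm ord}_d(2^m)}. \]

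Finally, by the multiplication principle, since the parameters are chosen independently and each ranges over a set of size $1+2^t$, the number of Euclidean self-dual cyclic codes equals $1+2^t$ raised to the total number of parameters, which is precisely the asserted formula. The argument is essentially bookkeeping once Theorem~\ref{thm12} is available; there is no serious obstacle. The only points meriting a moment of care are verifying the bijectivity of the exponent parametrization (which rests on unique factorization and the uniqueness of the monic generator polynomial) and confirming that the type~II cosets genuinely pair up two-to-one under negation, so that $\gamma_d/2$ is an integer counting the reciprocal pairs correctly.
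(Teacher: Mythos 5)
Your proposal is correct and follows the same route the paper indicates: Theorem~\ref{thm13} is obtained directly from the parametrization in Theorem~\ref{thm12}, with the SRIM factors contributing no freedom (fixed exponent $2^{t-1}$) and each of the $\sum_{d\mid N,\, d\notin G_{(2^m,1)}}\gamma_d/2$ reciprocal pairs contributing an independent choice of $s_{dj}\in\{0,1,\dots,2^t\}$. Your added care about the bijectivity of the exponent parametrization and the fixed-point-free pairing of type~II cosets under negation is exactly the right bookkeeping.
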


\begin{example}  The $2$-cyclotomic cosets of $\mathbb{Z}_{15}$ are presented in Table \ref{T4} together with their corresponding polynomials over $\mathbb{F}_2$. 

\begin{table}[!hbt]
    \centering
    \begin{tabular}{ccccc}
    \hline 
        $C_2(a)$  &  ${\rm ord}(a) $ &${\rm ord}(a) \in G_{(2,1)}$ & $f_a(x)$ & type \\
        \hline 
        $\{ 0 \}$ &$1$ & yes & $ x + 1$ & SRIM\\
$\{ 1, 2, 4, 8 \}$ &  $15$& no &$
 x^4 + x + 1$ & - \\

$\{  3, 6, 9,12 \}$& $5$& yes & $
 x^4 + x^3 + x^2 + x + 1 $ & SRIM \\

$\{ 5, 10 \}$ & $3$ & yes  & $ x^2 + x + 1$ & SRIM \\

$\{ 7, 11, 13, 14 \}$ & $15$& no &$
 x^4 + x^3 + 1$ & - \\
\hline
    \end{tabular}
    \caption{$2$-cyclotomic cosets in $\mathbb{Z}_{15}$ and their corresponding polynomials over $\mathbb{F}_{2}$}
    \label{T4}
\end{table}
The factorization of $x^{15}-1$ over $\mathbb{F}_{2}$ is of the form 
\[ x^{15}-1=
( x + 1)( x^2 + x + 1)(x^4 + x^3 + x^2 + x + 1)\left(( x^4 + x + 1 )( x^4 + x^3 + 1)\right)
\]
and
\[ x^{30}-1= (x^{15}-1)^2=
( x + 1)^2( x^2 + x + 1)^2(x^4 + x^3 + x^2 + x + 1)^2\left(( x^4 + x + 1 )^2( x^4 + x^3 + 1)^2\right),
\]
where the first three terms are SRIM and the last two terms form a reciprocal polynomial pair.  From Theorem \ref{thm12}, the   Euclidean self-dual  cyclic codes  of length $30$ over $\mathbb{F}_{2}$ are generated by   $( x + 1)( x^2 + x + 1)(x^4 + x^3 + x^2 + x + 1)( x^4 + x + 1 )^2 $, $( x + 1)( x^2 + x + 1)(x^4 + x^3 + x^2 + x + 1) ( x^4 + x^3 + 1)^2$, and $( x + 1)( x^2 + x + 1)(x^4 + x^3 + x^2 + x + 1)\left(( x^4 + x + 1 )( x^4 + x^3 + 1)\right)$.
Based on Theorem \ref{thm13}, the number of  Euclidean self-dual  cyclic codes  of length $30$ over $\mathbb{F}_{2}$ is
\[(1+2) ^ {\displaystyle \sum _{ {\substack{d|15\\ d\notin G_{({2},1)}}}  }  \frac{\Phi(d)}{2 {\rm ord}_d(2)}} =  (1+2) ^ {\displaystyle \sum _{d \in \{ 15\} }  \frac{\Phi(d)}{2 {\rm ord}_d(2)}} = 3^1=3.\] 
    
\end{example}

 \subsection{SCRIM Factors of $x^n-1$ over  Finite Fields and Applications}

Let $f(x)=\sum_{i=0}^k f_i x^i$ be a polynomial of degree $k$ in $\mathbb{F}_{p^{2m}}[x]$ with constant term $f_0 \neq 0$.  
The \emph{conjugate-reciprocal polynomial} of $f(x)$ is defined by  
\[
   f^\dagger(x) = f_0^{-p^m} x^k \sum_{i=0}^k f_i^{p^m} x^{-i}.
\]  
A polynomial $f(x)$ is said to be {self-conjugate-reciprocal irreducible monic} ({SCRIM}) if it satisfies the following conditions:  
 $f(x)$ is irreducible over $\mathbb{F}_{p^{2m}}$,  
$f(x) = f^\dagger(x)$, and  
 the leading coefficient $f_k = 1$.  
If these conditions do not hold, then $f(x)$ and $f^\dagger(x)$ together form   a \emph{conjugate-reciprocal polynomial pair}.  

In the subsequent subsections, SCRIM factors of $x^n - 1$ over $\mathbb{F}_{p^{2m}}$ will play a central role.  
In particular, they are crucial in the analysis and classification of Hermitian complementary dual cyclic codes as well as Hermitian self-dual cyclic codes of length $n$ over $\mathbb{F}_{p^{2m}}$.

 \subsubsection{SCRIM   Factors of $x^n-1$ over Finite Fields} 

   For an element  $a\in \mathbb{Z}_N$, the ${p^{2m}}$-cyclotomic coset $C_{p^{2m}}(a) $ is said to be of type $I'$ if   $C_{p^{2m}}(a)=C_{p^{2m}}(-p^ma)  $,  and it is of type $II'$ otherwise.   
According to~\cite[Section~2.1]{BJU2019}, the factorization of $x^n - 1$ over the finite field $\mathbb{F}_{p^{2m}}$ and  the complete characterization of its SCRIM factors are  described in terms of the ${p^{2m}}$-cyclotomic cosets  of $\mathbb{Z}_N$.  
In particular, the SCRIM factors correspond exactly to those cyclotomic cosets classified as type~${I}'$, whereas the cosets of type~${II}'$ generate pairs of conjugate-reciprocal   factors.   This characterization has a direct  link with oddly-good integers in $OG_{(p^m,1)}$.
The key results are summarized as follows.

\begin{lemma} \label{lem:fdagg} Let $p$ be a prime power and let $N$ be a positive integer  such that $p\nmid N$.  Let $a\in \mathbb{Z}_N$.  Then  $f(x)$ is induced by  $C_{p^{2m}}(a) $  if and only if   $f^\dagger(x)$ is induced by  $C_{p^{2m}}(-p^ma) $. 
    
\end{lemma}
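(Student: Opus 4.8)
The plan is to mirror the proof of Lemma~\ref{lem:f*}, tracking how the dagger operation acts on the \emph{roots} of the irreducible factor rather than on its coefficients directly. Let $\alpha$ be a primitive $N$th root of unity and suppose $f(x)=f_a(x)=\prod_{i\in C_{p^{2m}}(a)}(x-\alpha^i)$ is the monic polynomial induced by $C_{p^{2m}}(a)$; since $p\nmid N$ these roots are all simple. I would first record the two elementary effects on roots: passing to the reciprocal replaces each root $\beta$ by $\beta^{-1}$, while applying the $p^m$-power Frobenius $y\mapsto y^{p^m}$ to the coefficients replaces each root $\beta$ by $\beta^{p^m}$, because $\sum_i f_i\beta^i=0$ implies $\sum_i f_i^{p^m}\beta^{ip^m}=0$ after raising to the $p^m$ power. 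The normalizing factor $f_0^{-p^m}$ in the definition of $f^\dagger$ only serves to make the result monic, which one checks by inspecting the top-degree term.

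Combining these two effects, $f^\dagger(x)$ is the monic polynomial whose root set is $\{\beta^{-p^m}:\beta\ \text{a root of}\ f\}=\{\alpha^{-p^m i}:i\in C_{p^{2m}}(a)\}$. The key step is then purely combinatorial: since $-p^m\,p^{2mi}a=p^{2mi}(-p^m a)$ for every $i$, multiplication by the unit $-p^m$ carries $C_{p^{2m}}(a)$ bijectively onto $C_{p^{2m}}(-p^m a)$, that is, $-p^m\,C_{p^{2m}}(a)=C_{p^{2m}}(-p^m a)$. Hence the roots of $f^\dagger$ are exactly $\{\alpha^j:j\in C_{p^{2m}}(-p^m a)\}$, so $f^\dagger$ and the polynomial induced by $C_{p^{2m}}(-p^m a)$ are monic of equal degree with the same simple roots and therefore coincide. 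This yields the forward implication.

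For the converse I would observe that the dagger operation is an involution on the monic irreducible factors of $x^N-1$: applying it twice sends each root $\beta=\alpha^i$ to $\beta^{(-p^m)^2}=\alpha^{p^{2m}i}$, and since $C_{p^{2m}}(a)$ is closed under multiplication by $p^{2m}$ this merely permutes the root set, so $(f^\dagger)^\dagger=f$. Thus if $f^\dagger$ is induced by $C_{p^{2m}}(-p^m a)$, applying the forward direction to $f^\dagger$ recovers $f$ as the polynomial induced by $C_{p^{2m}}(-p^m(-p^m a))=C_{p^{2m}}(p^{2m}a)=C_{p^{2m}}(a)$, completing the equivalence.

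The main obstacle I anticipate is not any single hard step but the careful bookkeeping in the first paragraph: one must verify that applying the coefficient Frobenius genuinely corresponds to the root map $\beta\mapsto\beta^{p^m}$ and that this commutes with the reciprocal map up to the monic normalization, so that the composite effect on roots is unambiguously $\beta\mapsto\beta^{-p^m}$. Once that is pinned down, the coset identity $-p^m\,C_{p^{2m}}(a)=C_{p^{2m}}(-p^m a)$ together with the degree and separability count finishes the argument routinely.
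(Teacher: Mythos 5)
Your proof is correct and is the standard argument: the paper itself states this lemma without proof (deferring to the cited reference \cite{BJU2019}), and the expected justification is exactly the root-tracking computation you give, namely that the coefficient Frobenius sends the root set $\{\alpha^i\}$ to $\{\alpha^{p^m i}\}$, the reciprocal sends it to $\{\alpha^{-i}\}$, and the coset identity $-p^m\,C_{p^{2m}}(a)=C_{p^{2m}}(-p^m a)$ finishes the forward direction, with the converse following from the involution $(f^\dagger)^\dagger=f$ since $(-p^m)^2=p^{2m}$ fixes each $p^{2m}$-cyclotomic coset. All the bookkeeping you flag (the normalization by $f_0^{-p^m}$ and the separability of $x^N-1$ from $p\nmid N$) is handled correctly.
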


In view of {\cite[Lemma~2.2]{BJU2019}}, and noting its connection to oddly-good integers, we obtain the following characterizations.

\begin{lemma} \label{charFacScrim} Let $p$ be a prime power and let $N$ be a positive integer  such that $p\nmid N$.   Let $a\in \mathbb{Z}_N$   and $f_a(x)$ be  the monic irreducible polynomial over $\mathbb{F}_{p^{2m}}$ induced by  $C_{p^{2m}}(a)$.  Then the following statements are equivalent.
\begin{enumerate}
    \item  $f_a(x)$ is  a SCRIM factor of $x^N-1$.
    \item $C_{p^{2m}}(a) $  is of type $I'$.
    \item ${\rm ord}(a) \in OG_{({p^{m}},1)}$.
\end{enumerate}
\end{lemma}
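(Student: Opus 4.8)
The plan is to establish the three-way equivalence through two linking equivalences: I would treat $(1)\Leftrightarrow(2)$ as essentially a restatement of the reciprocity recorded in Lemma~\ref{lem:fdagg}, and reserve the genuine work for the arithmetic equivalence $(2)\Leftrightarrow(3)$, which I would obtain by transporting the cyclotomic-coset condition from $\mathbb{Z}_N$ down to a multiplicative congruence modulo $d:=\mathrm{ord}(a)$.

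First I would handle $(1)\Leftrightarrow(2)$. By Lemma~\ref{lem:fdagg}, the conjugate-reciprocal $f_a^\dagger(x)$ is exactly the irreducible polynomial induced by $C_{p^{2m}}(-p^m a)$. Since distinct $p^{2m}$-cyclotomic cosets induce distinct (pairwise coprime) irreducible factors of $x^N-1$, the monic polynomial $f_a$ satisfies $f_a=f_a^\dagger$ if and only if $C_{p^{2m}}(a)=C_{p^{2m}}(-p^m a)$. Monicity being built into the construction of $f_a$, this is precisely the assertion that $C_{p^{2m}}(a)$ is of type $I'$, which gives $(1)\Leftrightarrow(2)$.

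The core of the argument is $(2)\Leftrightarrow(3)$. Because the $p^{2m}$-cyclotomic cosets partition $\mathbb{Z}_N$, type $I'$ is equivalent to the single membership $-p^m a\in C_{p^{2m}}(a)$, i.e.\ to the existence of $i\ge 0$ with $p^{2mi}a\equiv -p^m a\pmod N$. Here the additive order enters: since $ca\equiv c'a\pmod N$ holds if and only if $c\equiv c'\pmod d$, this congruence collapses to $p^{2mi}\equiv -p^m\pmod d$. As $p\nmid N$ forces $\gcd(p,d)=1$, I would cancel the factor $p^m$ (invertible modulo $d$) to reach $p^{m(2i-1)}\equiv -1\pmod d$. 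Writing $k=2i-1$ and letting $i$ range over the positive integers, $k$ sweeps out all positive odd integers; thus type $I'$ is equivalent to the existence of an odd $k\ge 1$ with $p^{mk}\equiv -1\pmod d$, that is, $d\mid (p^m)^k+1^k$ for some odd $k$. This is exactly the defining condition for $d=\mathrm{ord}(a)\in OG_{(p^m,1)}$, completing $(2)\Leftrightarrow(3)$.

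The hard part will be the passage modulo $d$: one must verify carefully that reducing the coset membership from modulo $N$ to modulo $\mathrm{ord}(a)$ loses no information, and that the resulting exponent $2i-1$ produces precisely the odd positive integers featured in the definition of oddly-good integers---in particular that the boundary index $i=0$ yields nothing beyond the $k=1$ case after clearing the negative power of $p^m$. Once this dictionary between additive orders in $\mathbb{Z}_N$ and the multiplicative behaviour of $p^m$ modulo $d$ is in place, the remaining steps are a routine unwinding of the definitions of the conjugate-reciprocal polynomial and of $OG_{(p^m,1)}$.
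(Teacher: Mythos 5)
Your proof is correct and follows the same route the paper (and the cited reference \cite{BJU2019}) relies on: $(1)\Leftrightarrow(2)$ via Lemma~\ref{lem:fdagg} together with the fact that distinct $p^{2m}$-cyclotomic cosets induce distinct irreducible factors, and $(2)\Leftrightarrow(3)$ by reducing the coset identity $C_{p^{2m}}(a)=C_{p^{2m}}(-p^m a)$ to the congruence $p^{m(2i-1)}\equiv -1\pmod{\mathrm{ord}(a)}$, whose odd exponents are exactly the defining condition for $\mathrm{ord}(a)\in OG_{(p^m,1)}$. The paper itself omits the proof, so your write-up supplies precisely the missing argument, including the correct handling of the $i=0$ boundary case.
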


For each divisor $d$ of $N$, let $\{a_{d1},  a_{d2},  \dots, a_{d\lambda_d}\}$ be a complete set of representative of the $p^{2m}$-cyclotomic cosets in $A_d$, where    $\lambda_d = \dfrac{\Phi(d)}{{\rm ord}_{d}(p^{2m})}$. 
Using the arguments similar to those for \eqref{facXN-1} and  a  rearrangement  according to the characterization in Lemma \ref{charFacScrim},  we obtain the following factorization which is useful for the subsequent study of Hermitian complementary dual and self-dual cyclic codes.

\begin{align} \label{refacXN-1H}
    x^N-1=  \prod_{\substack{d|N\\ d\in OG_{({p^m},1)}}} \prod _{j=1}^{\lambda_d}  f_{a_{dj}} (x) \prod_{\substack{d|N\\ d\notin OG_{({p^m},1)}}} \prod _{j=1}^{\lambda_d/2}  f_{a_{dj}} (x) f_{a_{dj}} ^\dagger(x),
\end{align}
where  $f_{a_{dj}} ^\dagger(x)= f_{-p^m a_{dj}} (x)$ by Lemma \ref{lem:fdagg}. Consequently, we have 
\begin{align}\label{refacXn-1H}
  x^n-1=  x^{Np^t}-1=  \prod_{\substack{d|N\\ d\in OG_{({p^m},1)}}} \prod _{j=1}^{\lambda_d}  f_{a_{dj}} (x) ^{p^t} \prod_{\substack{d|N\\ d\notin OG_{({p^m},1)}}} \prod _{j=1}^{\lambda_d/2}  f_{a_{dj}} (x)^{p^t} f_{a_{dj}} ^\dagger(x) ^{p^t}.
\end{align}

\begin{example} \label{ex9} The $2^2$-cyclotomic cosets of $\mathbb{Z}_{15}$ are presented in Table \ref{T5} together with their corresponding polynomials over $\mathbb{F}_{2^2}=\{0,1,\alpha,\alpha^2=1+\alpha\}$. 

\begin{table}[!hbt]
    \centering
    \begin{tabular}{ccccc}
    \hline 
        $C_{2^2}(a)$  &  ${\rm ord}(a) $ &${\rm ord}(a) \in OG_{(2,1)}$ & $f_a(x)$ & type \\
        \hline

$\{ 0 \}$ &$1$  & yes &$ x + 1$ &  SCRIM\\

$\{ 1, 4 \}$& $ 15$ & no & $x^2 + x + \alpha$ &  -\\

$\{ 2, 8 \}$ &$ 15 $ &no  &$x^2 + x + \alpha^2 $ & -\\

$\{ 3, 12 \}$ &$ 5 $  & yes &$x^2 + \alpha^2  x + 1$ & SCRIM\\

$\{ 5 \}$ &$ 3 $  & yes &$x + \alpha $ & SCRIM \\

$\{ 6, 9 \}$ &$ 5 $  & yes &$x^2 + \alpha  x + 1$ &  SCRIM\\

$\{ 10 \} $&$3 $  &  yes &$x + \alpha^2 $ & SCRIM\\

$\{ 11, 14 \}$ &$ 15 $   & no &$x^2 + \alpha^2  x + \alpha^2 $ &  -\\

$\{ 13, 7 \}$ &$ 15 $  & no  &$ x^2 + \alpha  x + \alpha  $ & - \\
\hline
    \end{tabular}
    \caption{$2^2$-cyclotomic cosets in $\mathbb{Z}_{15}$ and their corresponding polynomials over $\mathbb{F}_{2^2}$}
    \label{T5}
\end{table}
From Table \ref{T5},  the factorization of $x^{15}-1 $ over $\mathbb{F}_{2^2}$ can be summarized as follows.

\[x^{15}-1= (x + 1)(x^2 + \alpha^2  x + 1)(x + \alpha )(x^2 + \alpha  x + 1)(x + \alpha^2 )\left((x^2 + \alpha  x + \alpha   )(x^2 + x + \alpha)\right)
\left((x^2 + \alpha^2  x + \alpha^2  )(x^2 + x + \alpha^2 )\right),\]
where the first five terms are SCRIM and  the last four terms form two conjugate-reciprocal polynomial pairs. 

\end{example}

\subsubsection{Hermitian Complementary Dual Cyclic Codes over  Finite Fields}

Over a finite field of square order, a \emph{Hermitian complementary dual cyclic code} is defined to be a cyclic code $C$ for which the intersection with its Hermitian dual is trivial.  
In other words, $C$ is Hermitian complementary dual   if  
$
   C \cap C^{\perp_H} = \{0\},
$
where $C^{\perp_H}$ denotes the Hermitian dual of $C$.

This subsection give a precise review on the characterization and construction of  complementary dual cyclic codes under the Hermitian inner product by utilizing the framework of  SCRIM factors of the polynomial $x^N - 1$, together with properties of oddly-good integers.  
Furthermore, an explicit formula for the number of Hermitian complementary dual cyclic codes over finite fields is provided. 
These results can be regarded as a specialized and simplified version of the more general theory of Hermitian complementary dual abelian codes discussed in~\cite{BJU2018}. Alternative,  these can be obtained using the arguments similar  to those in Subsection \ref{secECD} while the definition of Hermitian dual is applied instead of  the Euclidean dual.

The following theorem can be established by employing the definition of Hermitian inner product and arguments analogous to those used in the proof of the Euclidean dual characterization of a cyclic code, as presented in~\cite[Theorem~7.3.7]{LS2004}.

\begin{theorem} \label{genHD}
Let $C$ be a cyclic code of length $n$ over $\mathbb{F}_{p^{2m}}$ with the generator polynomial $g(x)$.  Then the  Hermitian  dual $C^{\perp_{H}}$ of $C$   is $h^\dagger(x)$, where $h(x)= \dfrac{x^n-1}{g(x)}$.
\end{theorem}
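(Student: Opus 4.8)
The plan is to reduce the Hermitian statement to the already-established Euclidean one (Theorem~\ref{genED}) by means of the Frobenius conjugation $\sigma\colon \mathbb{F}_{p^{2m}}\to\mathbb{F}_{p^{2m}}$, $\sigma(a)=a^{p^m}$, which fixes $\mathbb{F}_{p^m}$ pointwise and has order $2$. I extend $\sigma$ coordinatewise to $\mathbb{F}_{p^{2m}}^n$ and to polynomials by acting on coefficients, writing $\sigma(f)(x)=\sum_i f_i^{p^m}x^i$. The starting observation is the identity $\langle \boldsymbol{x},\boldsymbol{y}\rangle_H=\sum_i x_i y_i^{p^m}=\langle \boldsymbol{x},\sigma(\boldsymbol{y})\rangle_E$, valid for all $\boldsymbol{x},\boldsymbol{y}\in\mathbb{F}_{p^{2m}}^n$. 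From this, $\boldsymbol{x}\in C^{\perp_H}$ holds exactly when $\langle\boldsymbol{x},\sigma(\boldsymbol{c})\rangle_E=0$ for every $\boldsymbol{c}\in C$; since $\sigma$ is a bijection of $C$ onto $\sigma(C)$, this says precisely $\boldsymbol{x}\in(\sigma(C))^{\perp_E}$. Hence $C^{\perp_H}=(\sigma(C))^{\perp_E}$, and the problem is transferred to computing a Euclidean dual.

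Next I would identify $\sigma(C)$ as a cyclic code together with its generator and check polynomials. Because $\sigma$ commutes with the cyclic shift, $\sigma(C)$ is again cyclic, and because $\sigma$ fixes the prime-field coefficients of $x^n-1$ we have $\sigma(x^n-1)=x^n-1$; thus coefficientwise Frobenius induces a ring automorphism of $\mathbb{F}_{p^{2m}}[x]/\langle x^n-1\rangle$ carrying the ideal $\langle g(x)\rangle$ to $\langle\sigma(g)(x)\rangle$. Since $\sigma(1)=1$, the polynomial $\sigma(g)$ is monic, it divides $x^n-1$, and its cofactor is $(x^n-1)/\sigma(g)=\sigma(h)$. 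Therefore $\sigma(C)$ is the cyclic code with generator polynomial $\sigma(g)$ and check polynomial $\sigma(h)$, and Theorem~\ref{genED} gives that $(\sigma(C))^{\perp_E}$ is generated by $(\sigma(h))^*$.

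It then remains to match this with the claimed generator $h^\dagger$, which is the one genuinely computational step. Unwinding the definitions, $f^*(x)=f_0^{-1}x^{\deg f}f(1/x)$ and $f^\dagger(x)=f_0^{-p^m}x^{\deg f}\sigma(f)(1/x)$, so applying $(\cdot)^*$ to $\sigma(f)$ gives $(\sigma(f))^*(x)=(\sigma(f)_0)^{-1}x^{\deg f}\sigma(f)(1/x)=(f_0^{p^m})^{-1}x^{\deg f}\sigma(f)(1/x)=f^\dagger(x)$, where I use that $\sigma$ preserves degree (the leading and constant coefficients of $f$ are nonzero and map to nonzero elements). Taking $f=h$ yields $(\sigma(h))^*=h^\dagger$, so $C^{\perp_H}=(\sigma(C))^{\perp_E}=\langle h^\dagger(x)\rangle$, as desired. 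The main thing to watch is the bookkeeping of normalization constants in this last identity---ensuring the monic normalizations in $(\cdot)^*$ and $(\cdot)^\dagger$ agree after the Frobenius twist---together with the routine check that $\sigma$ preserves dimension so that the degree count underlying Theorem~\ref{genED} transfers verbatim. An alternative, equally viable route is to mimic the Euclidean proof directly: exhibit $\langle h^\dagger\rangle\subseteq C^{\perp_H}$ by a coefficient computation based on $g(x)h(x)=x^n-1$ with a Frobenius conjugation inserted, and then conclude by the dimension equality $\dim\langle h^\dagger\rangle=n-\deg h=\deg g=\dim C^{\perp_H}$.
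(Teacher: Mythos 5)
Your proposal is correct. The paper itself gives no written proof of Theorem~\ref{genHD}: it only remarks that the result ``can be established by employing the definition of Hermitian inner product and arguments analogous to those used in the proof of the Euclidean dual characterization'' in \cite[Theorem~7.3.7]{LS2004} --- i.e.\ the intended route is the direct one you mention only as an alternative at the end (redo the coefficient computation from $g(x)h(x)=x^n-1$ with the conjugation inserted, then count dimensions). Your primary argument is genuinely different and arguably cleaner: you factor the Hermitian product as $\langle \boldsymbol{x},\boldsymbol{y}\rangle_H=\langle \boldsymbol{x},\sigma(\boldsymbol{y})\rangle_E$, deduce $C^{\perp_H}=(\sigma(C))^{\perp_E}$, identify $\sigma(C)$ as the cyclic code with generator $\sigma(g)$ and check polynomial $\sigma(h)$, and then invoke the already-stated Euclidean Theorem~\ref{genED} together with the identity $(\sigma(h))^*=h^\dagger$. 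This buys you a proof with no new coefficient computation --- everything is outsourced to the Euclidean case --- at the modest cost of verifying that the coefficientwise Frobenius is a (semilinear) automorphism of $\mathbb{F}_{p^{2m}}[x]/\langle x^n-1\rangle$ preserving cyclicity, divisibility of $x^n-1$, and dimension, all of which you address. Your normalization check $(\sigma(f))^*=f^\dagger$ matches the paper's definition of $f^\dagger$ (whose displayed formula contains an obvious typo, $f_k x^{-k}$ for $f_i x^{-i}$, but is clearly intended as $f_0^{-p^m}x^k\sigma(f)(1/x)$). Both routes are sound; either would serve as the omitted proof.
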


Since the intersection $C \cap C^{\perp_H}$ forms a cyclic code generated by   $LCM(g(x), h^\dagger(x))$, the code $C$ is Hermitian complementary dual if and only if $LCM(g(x), h^\dagger(x)) = x^n - 1$. Equivalently, this condition can be expressed in terms of the greatest common divisor, namely $GCD(g(x), h^\dagger(x)) = 1$.  Based on this characterization, the following theorem is established.

\begin{theorem}   Let $p$ be a prime and let $n$ be a positive integer and write $n=Np^t$, where $N$ is a positive integer such that $p\nmid N$ and $t\geq 0$ is an integer. 
    Let $C$ be a cyclic code of length $n$ over $\mathbb{F}_{p^{2m}}$ with generator polynomial $g(x)$. Then $C$ is Hermitian complementary dual if and only  if $g(x)$  is self-conjugate-reciprocal and each irreducible divisor has multiplicity $p^t$.
\end{theorem}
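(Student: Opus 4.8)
The plan is to convert the Hermitian complementary-dual condition into a coprimality statement and then read it off the SCRIM factorization \eqref{refacXn-1H}. As observed just before the statement, $C\cap C^{\perp_H}$ is the cyclic code generated by $\mathrm{LCM}(g(x),h^\dagger(x))$ with $h(x)=(x^n-1)/g(x)$, so $C$ is Hermitian complementary dual if and only if $\mathrm{LCM}(g(x),h^\dagger(x))=x^n-1$, equivalently $\gcd(g(x),h^\dagger(x))=1$. I would take this equivalence (a consequence of Theorem~\ref{genHD}) as the starting point.

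First I would record the two structural facts about the conjugate-reciprocal operation that drive the argument: on monic polynomials with nonzero constant term, $f\mapsto f^\dagger$ is a multiplicative involution, and by Lemma~\ref{lem:fdagg} together with Lemma~\ref{charFacScrim} it fixes each SCRIM factor while interchanging the two members of every conjugate-reciprocal pair. Multiplicativity holds because coefficient-reversal and the $p^m$-power Frobenius are each multiplicative and the monic normalization is preserved; involutivity follows from $(f_a^\dagger)^\dagger=f_{-p^m(-p^m a)}=f_{p^{2m}a}=f_a$, the last equality because $p^{2m}a\in C_{p^{2m}}(a)$.

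Next I would expand $g$ according to \eqref{refacXn-1H},
\[ g(x)=\prod_{\substack{d\mid N\\ d\in OG_{(p^m,1)}}}\prod_{j=1}^{\lambda_d} f_{a_{dj}}(x)^{e_{dj}}\prod_{\substack{d\mid N\\ d\notin OG_{(p^m,1)}}}\prod_{j=1}^{\lambda_d/2} f_{a_{dj}}(x)^{s_{dj}}\,f_{a_{dj}}^\dagger(x)^{s'_{dj}}, \]
with every exponent in $\{0,1,\dots,p^t\}$, form $h=(x^n-1)/g$ by complementing each exponent to $p^t$, and then apply $\dagger$ termwise using the two facts above. In $h^\dagger$ a SCRIM factor $f_{a_{dj}}$ keeps exponent $p^t-e_{dj}$, whereas within a pair the exponents are swapped: $f_{a_{dj}}$ acquires exponent $p^t-s'_{dj}$ and $f_{a_{dj}}^\dagger$ acquires $p^t-s_{dj}$. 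Imposing $\gcd(g,h^\dagger)=1$ factor by factor yields $e_{dj}\in\{0,p^t\}$ for each SCRIM factor, and for each pair the coupled conditions ($s_{dj}=0$ or $s'_{dj}=p^t$) and ($s'_{dj}=0$ or $s_{dj}=p^t$).

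The delicate step, and the main obstacle, is the case analysis on a pair, where the constraints couple $s_{dj}$ and $s'_{dj}$ across a factor and its conjugate-reciprocal rather than treating them independently. I would argue that an intermediate exponent $0<s_{dj}<p^t$ forces $s'_{dj}=p^t$ from the first constraint and then violates the second, leaving only $(s_{dj},s'_{dj})\in\{(0,0),(p^t,p^t)\}$. Assembling the SCRIM and pair conditions, every irreducible divisor of $g$ that occurs does so with multiplicity exactly $p^t$, and the two factors of any pair always occur with equal exponent; the latter is precisely $g=g^\dagger$. Hence $\gcd(g,h^\dagger)=1$ is equivalent to $g$ being self-conjugate-reciprocal with each irreducible divisor of multiplicity $p^t$, which is the assertion. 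The degenerate case $t=0$, where $p^t=1$ and each multiplicity lies in $\{0,1\}$, is covered by the same analysis verbatim.
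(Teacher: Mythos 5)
Your proposal is correct and follows essentially the same route as the paper: the paper reduces Hermitian complementary duality to $\gcd(g(x),h^{\dagger}(x))=1$ via Theorem~\ref{genHD} and then reads the conclusion off the SCRIM factorization, exactly as you do. Your exponent case analysis on SCRIM factors and conjugate-reciprocal pairs is the content the paper defers to the analogous Euclidean argument and later records explicitly as Theorem~\ref{genHLCD}, and it is carried out correctly.
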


\begin{corollary}  

Let $p$ be a prime  and let $N$ be a positive integer such that $p\nmid N$. 
    Let $C$ be a cyclic code of length $N$ over $\mathbb{F}_{p^{2m}}$ with generator polynomial $g(x)$. Then $C$ is  Hermitian complementary dual if and only  if $g(x)$  is self-conjugate-reciprocal.
\end{corollary}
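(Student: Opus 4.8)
The plan is to deduce this corollary as the $t=0$ specialization of the preceding theorem and, equivalently, to argue directly from the Hermitian dual characterization in Theorem~\ref{genHD}. Since $p\nmid N$, taking $n=N$ forces $t=0$ and $p^t=1$; moreover $x^N-1$ is separable, so its factorization \eqref{refacXN-1H} has every irreducible factor occurring with multiplicity exactly $1$. Consequently the requirement ``each irreducible divisor has multiplicity $p^t$'' in the preceding theorem is automatically satisfied by any generator polynomial $g(x)$ of a cyclic code of length $N$, and the only surviving condition is that $g(x)$ be self-conjugate-reciprocal. This already yields the statement, but I would also record the underlying direct argument to make the equivalence transparent.

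For the direct route, I would start from the fact, recalled just before Theorem~\ref{genHD}, that $C\cap C^{\perp_H}$ is the cyclic code generated by $\mathrm{LCM}(g(x),h^\dagger(x))$ with $h(x)=(x^N-1)/g(x)$; hence $C$ is Hermitian complementary dual if and only if $\gcd(g(x),h^\dagger(x))=1$. First I would pass to the level of irreducible factors: writing $\mathcal F$ for the set of monic irreducible factors of $x^N-1$ and using separability, $g$ and $h$ correspond to complementary subsets $G$ and $\mathcal F\setminus G$ of $\mathcal F$. By Lemma~\ref{lem:fdagg}, the conjugate-reciprocal operation $f\mapsto f^\dagger$ is a well-defined involution on $\mathcal F$ (fixing the SCRIM factors and swapping each type~$II'$ pair appearing in \eqref{refacXN-1H}), so the factors of $h^\dagger$ are exactly $\{f^\dagger : f\in\mathcal F\setminus G\}$.

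The key step is then the set-theoretic translation: $\gcd(g,h^\dagger)=1$ holds if and only if no $f\in G$ satisfies $f^\dagger\in\mathcal F\setminus G$, that is, if and only if $f\in G\Rightarrow f^\dagger\in G$. Because $\dagger$ is an involution of the finite set $\mathcal F$, this closure condition is equivalent to $G$ being $\dagger$-invariant, which in turn is precisely the statement that $g=g^\dagger$, i.e.\ that $g$ is self-conjugate-reciprocal. I expect the only delicate point to be the bookkeeping that separability of $x^N-1$ lets one replace divisibility statements about $g$ and $h^\dagger$ by membership statements about their multiplicity-free factor sets; once that reduction is in place, the involution property supplied by Lemma~\ref{lem:fdagg} makes the equivalence immediate.
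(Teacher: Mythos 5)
Your proposal is correct and matches the paper's (implicit) argument: the corollary is obtained as the $t=0$ case of the preceding theorem, where separability of $x^N-1$ makes the multiplicity condition automatic, and that theorem is itself derived from the characterization $C\cap C^{\perp_H}=\langle \mathrm{LCM}(g(x),h^\dagger(x))\rangle$, i.e.\ $\gcd(g(x),h^\dagger(x))=1$, exactly as in your direct route. Your added bookkeeping via the $\dagger$-involution on the set of irreducible factors (Lemma~\ref{lem:fdagg}) is sound and only makes explicit what the paper leaves implicit.
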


\begin{theorem} \label{genHLCD}  Let $p$ be a prime and let $n$ be a positive integer and write $n=Np^t$, where $N$ is a positive integer such that $p\nmid N$ and $t\geq 0$ is an integer.   Assume the factorization in \eqref{refacXn-1H} and   let $C$ be a cyclic code of length $n$ over $\mathbb{F}_{p^{2m}}$ with generator polynomial 
\[ g(x) =  \prod_{\substack{d|N\\ d\in OG_{({p^m},1)}}} \prod _{j=1}^{\lambda_d}  f_{a_{dj}} (x) ^{r_{dj} } \prod_{\substack{d|N\\ d\notin OG_{({p^m},1)}}} \prod _{j=1}^{\lambda_d/2}  f_{a_{dj}} (x)^{s_{dj}} f_{a_{dj}} ^\dagger(x) ^{s'_{dj}} .\]
    Then $C$ is Hermitian  complementary dual if and only   if $ r_{dj} \in \{0,p^t\}$ for all  $d|N$ such that $d\in OG_{({p^m},1)}$ and  $j=1,2,\dots, \gamma_d$, and $( s_{dj},s'_{dj}) \in \{(0,0),(p^t,p^t)\}$  for all  $d|N$ such that $d\notin OG_{({p^m},1)}$ and  $j=1,2,\dots, \lambda_d/2$.
\end{theorem}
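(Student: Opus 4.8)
The plan is to reduce the Hermitian complementary dual condition to a factor-by-factor comparison of multiplicities, in direct parallel with the Euclidean argument behind Theorem~\ref{thm9}. By Theorem~\ref{genHD}, the Hermitian dual $C^{\perp_H}$ is generated by $h^\dagger(x)$, where $h(x) = (x^n-1)/g(x)$. Since $C \cap C^{\perp_H}$ is the cyclic code generated by $\mathrm{LCM}(g(x), h^\dagger(x))$, the code $C$ is Hermitian complementary dual precisely when $\mathrm{GCD}(g(x), h^\dagger(x)) = 1$. Thus the entire problem reduces to expressing $h^\dagger(x)$ in terms of the irreducible factors of \eqref{refacXn-1H} and then reading off when $g$ and $h^\dagger$ share no common irreducible factor.

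First I would write down $h(x)$ by subtracting exponents in \eqref{refacXn-1H}: on each SCRIM factor (those with $d \in OG_{({p^m},1)}$) the multiplicity of $f_{a_{dj}}$ in $h$ is $p^t - r_{dj}$, while on each type-$II'$ pair (those with $d \notin OG_{({p^m},1)}$) the multiplicities of $f_{a_{dj}}$ and $f_{a_{dj}}^\dagger$ in $h$ are $p^t - s_{dj}$ and $p^t - s'_{dj}$. Next I would apply the conjugate-reciprocal operation, which is multiplicative on monic polynomials and an involution. For $d \in OG_{({p^m},1)}$, Lemma~\ref{charFacScrim} guarantees that $f_{a_{dj}}$ is SCRIM, so $f_{a_{dj}}^\dagger = f_{a_{dj}}$; for $d \notin OG_{({p^m},1)}$, Lemma~\ref{lem:fdagg} gives $f_{a_{dj}}^\dagger = f_{-p^m a_{dj}}$, so $\dagger$ simply interchanges the two members of each pair. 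Applying $\dagger$ to $h(x)$ therefore yields
\[ h^\dagger(x) = \prod_{\substack{d|N\\ d\in OG_{({p^m},1)}}} \prod_{j=1}^{\lambda_d} f_{a_{dj}}(x)^{\,p^t - r_{dj}} \prod_{\substack{d|N\\ d\notin OG_{({p^m},1)}}} \prod_{j=1}^{\lambda_d/2} f_{a_{dj}}(x)^{\,p^t - s'_{dj}}\, f_{a_{dj}}^\dagger(x)^{\,p^t - s_{dj}}. \]

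With $g$ and $h^\dagger$ both written as products over the same collection of distinct monic irreducibles, the condition $\mathrm{GCD}(g,h^\dagger)=1$ decouples into one condition per irreducible factor. A SCRIM factor $f_{a_{dj}}$ with $d \in OG_{({p^m},1)}$ contributes to the GCD with exponent $\min(r_{dj},\, p^t - r_{dj})$, which vanishes exactly when $r_{dj} \in \{0, p^t\}$. For a pair with $d \notin OG_{({p^m},1)}$, the factors $f_{a_{dj}}$ and $f_{a_{dj}}^\dagger$ contribute with exponents $\min(s_{dj},\, p^t - s'_{dj})$ and $\min(s'_{dj},\, p^t - s_{dj})$; requiring both to vanish forces the two disjunctions $(s_{dj} = 0 \text{ or } s'_{dj} = p^t)$ and $(s'_{dj} = 0 \text{ or } s_{dj} = p^t)$, whose only simultaneous solutions in $\{0,1,\dots,p^t\}^2$ are $(0,0)$ and $(p^t,p^t)$, since a short check rules out the mixed candidates $(0,p^t)$ and $(p^t,0)$. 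This yields precisely the stated criterion.

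The main obstacle is establishing the exact form of $h^\dagger(x)$ displayed above, namely that the conjugate-reciprocal map is multiplicative on these monic factorizations and permutes the irreducible factors exactly according to the SCRIM versus type-$II'$ dichotomy. This is where Lemmas~\ref{lem:fdagg} and~\ref{charFacScrim} carry the real weight; once the action of $\dagger$ on the factors of \eqref{refacXn-1H} is pinned down, the remaining multiplicity bookkeeping is entirely routine and mirrors the Euclidean case of Theorem~\ref{thm9}.
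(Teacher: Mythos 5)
Your proposal is correct and follows essentially the same route the paper intends: reduce to $\mathrm{GCD}(g,h^\dagger)=1$ via Theorem~\ref{genHD}, use Lemmas~\ref{lem:fdagg} and~\ref{charFacScrim} to track how $\dagger$ fixes the SCRIM factors and swaps each conjugate-reciprocal pair, and then compare multiplicities factor by factor exactly as in the Euclidean case of Theorem~\ref{thm9}. The elimination of the mixed pairs $(0,p^t)$ and $(p^t,0)$ is carried out correctly, so nothing is missing.
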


\begin{theorem} \label{thmNumHLCD}
Let $p$ be a prime and let $n$ be a positive integer and write $n=Np^t$, where $N$ is a positive integer such that $p\nmid N$ and $t\geq 0$ is an integer. Then the number of Hermitian  complementary dual codes of length $n$ over $\mathbb{F}_{p^{2m}}$  is 
    \[ 2^ {\displaystyle \sum_{\substack{d|N\\ d\in OG_{({p^m},1)}}} \frac{\Phi(d)}{{\rm ord}_d (p^{2m})} + \sum_{\substack{d|N\\ d\notin OG_{({p^m},1)}}}  \frac{\Phi(d)}{2{\rm ord}_d (p^{2m})} }.\]
\end{theorem}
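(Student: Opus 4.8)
The plan is to derive the count directly from the characterization in Theorem~\ref{genHLCD} by a product-counting argument. Since every cyclic code of length $n$ over $\mathbb{F}_{p^{2m}}$ is generated by a unique monic divisor of $x^n-1$, enumerating Hermitian complementary dual codes is the same as enumerating the admissible generator polynomials described in Theorem~\ref{genHLCD}. The factorization \eqref{refacXn-1H} exhibits $x^n-1$ as a product over two disjoint families of pairwise coprime irreducible blocks: the SCRIM factors $f_{a_{dj}}(x)$, indexed by pairs $(d,j)$ with $d\mid N$, $d\in OG_{({p^m},1)}$, and $1\le j\le \lambda_d$; and the conjugate-reciprocal pairs $\{f_{a_{dj}}(x),f_{a_{dj}}^\dagger(x)\}$, indexed by pairs $(d,j)$ with $d\mid N$, $d\notin OG_{({p^m},1)}$, and $1\le j\le \lambda_d/2$. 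Because these blocks are pairwise coprime and distinct, the exponent choices defining $g(x)$ are mutually independent, so the total number of admissible $g(x)$ factors as a product of per-block counts.

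First I would count the SCRIM blocks. By Theorem~\ref{genHLCD}, for each such block the admissible exponent satisfies $r_{dj}\in\{0,p^t\}$, giving exactly two choices per block; hence these blocks contribute a factor of $2^{\sum_{d\in OG_{({p^m},1)}}\lambda_d}$. Next I would count the conjugate-reciprocal blocks: Theorem~\ref{genHLCD} forces $(s_{dj},s'_{dj})\in\{(0,0),(p^t,p^t)\}$, again exactly two choices per block, so these contribute $2^{\sum_{d\notin OG_{({p^m},1)}}\lambda_d/2}$. Multiplying the two contributions and substituting $\lambda_d=\Phi(d)/{\rm ord}_d(p^{2m})$ yields the claimed formula
\[ 2^{\displaystyle \sum_{\substack{d|N\\ d\in OG_{({p^m},1)}}} \frac{\Phi(d)}{{\rm ord}_d(p^{2m})} + \sum_{\substack{d|N\\ d\notin OG_{({p^m},1)}}}\frac{\Phi(d)}{2\,{\rm ord}_d(p^{2m})}}. \]

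Since the argument is purely combinatorial bookkeeping resting on Theorem~\ref{genHLCD}, there is no serious obstacle; the only points requiring care are the bijection between generator polynomials and cyclic codes (guaranteed by the uniqueness of the generator polynomial of a cyclic code) and the fact that $\lambda_d$ is even whenever $d\notin OG_{({p^m},1)}$, so that the indexing $1\le j\le \lambda_d/2$ in the conjugate-reciprocal family is well defined. The latter is precisely the statement that type~$II'$ cyclotomic cosets occur in matched pairs $C_{p^{2m}}(a),\,C_{p^{2m}}(-p^m a)$, which underlies the factorization \eqref{refacXn-1H} through Lemmas~\ref{lem:fdagg} and~\ref{charFacScrim}. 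Note also that the count is uniform in $t$: when $t=0$ one has $p^t=1$, and the two-element choice sets $\{0,1\}$ and $\{(0,0),(1,1)\}$ still yield exactly two options per block, so the formula holds for every $t\ge 0$.
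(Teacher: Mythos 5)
Your argument is correct and is exactly the derivation the paper intends: Theorem~\ref{thmNumHLCD} is presented as an immediate consequence of the generator-polynomial characterization in Theorem~\ref{genHLCD}, and your block-by-block count (two choices per SCRIM factor, two per conjugate-reciprocal pair, with $\lambda_d=\Phi(d)/{\rm ord}_d(p^{2m})$) is precisely that bookkeeping. Your added remarks on the uniqueness of generator polynomials and the evenness of $\lambda_d$ for $d\notin OG_{(p^m,1)}$ are the right points to flag and are consistent with the paper's setup via Lemmas~\ref{lem:fdagg} and~\ref{charFacScrim}.
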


\begin{example} From Example \ref{ex9}, 
     we have the factorization of $x^{15}-1 $ over $\mathbb{F}_{2^2}=\{0,1,\alpha,\alpha^2=1+\alpha\}$  of the form 
\[x^{15}-1= (x + 1)(x^2 + \alpha^2  x + 1)(x + \alpha )(x^2 + \alpha  x + 1)(x + \alpha^2 )\left((x^2 + \alpha  x + \alpha   )(x^2 + x + \alpha)\right)
\left((x^2 + \alpha^2  x + \alpha^2  )(x^2 + x + \alpha^2 )\right),\]
where  the first five terms are SCRIM and the last four terms form two conjugate-reciprocal polynomial pairs.
rom Theorem \ref{genHLCD},  it follows that
$(x + 1)^2(x^2 + \alpha^2  x + 1)^2(x + \alpha )^2(x^2 + \alpha  x + 1)^2(x + \alpha^2 )^2$, $(x + 1)^2 
\left((x^2 + \alpha^2  x + \alpha^2  )^2(x^2 + x + \alpha^2 )^2\right)$, and $  \left((x^2 + \alpha  x + \alpha   )^2(x^2 + x + \alpha)^2\right)
\left((x^2 + \alpha^2  x + \alpha^2  )^2(x^2 + x + \alpha^2 )^2\right)$
are generator polynomials  of Hermitian complementary dual cyclic codes of length $30$ over $\mathbb{F}_{2^2}$.  Based on Theorem \ref{thmNumHLCD}, the number of Hermitian  complementary dual codes of length $30$ over $\mathbb{F}_{2^{2}}$  is 
    \[ 2^ {\displaystyle \sum_{\substack{d|15\\ d\in OG_{({2},1)}}} \frac{\Phi(d)}{{\rm ord}_d (2^{2})} +  \sum_{\substack{d|15\\ d\notin OG_{({2},1)}}}   \frac{\Phi(d)}{2{\rm ord}_d (2^{2})} } =  2^ {\displaystyle  \sum_{ d\in \{1,3,5\}}   \frac{\Phi(d)}{{\rm ord}_d (2^{2})} + \sum_{ d\in \{15\}}  \frac{\Phi(d)}{2{\rm ord}_d (2^{2})} } = 2^{(1+2+2)+2} =2^7.\]
\end{example}

 \subsubsection{Hermitian  Self-Dual Cyclic Codes over  Finite Fields}

A cyclic code $C$ of length $n$ over the finite field $\mathbb{F}_{p^{2m}}$ is called \emph{Hermitian self-dual} if it coincides with its Hermitian dual, that is, $C = C^{\perp_H}$.  
Hermitian self-dual cyclic codes have attracted considerable interest in coding theory, not only because of their rich algebraic structure but also due to their applications in error correction and quantum coding.  
A complete characterization, together with explicit enumeration formulas for such codes over finite fields, was provided in \cite[Section~III.B]{JLS2014}.  

In light of \cite[Proposition~2.9]{JLS2014}, the existence of Hermitian self-dual cyclic codes of length $n$ over $\mathbb{F}_{p^{2m}}$ can be described in terms of their generator polynomials.  
More precisely, let $C$ be a cyclic code of length $n$ over $\mathbb{F}_{p^{2m}}$ with generator polynomial $g(x)$.  
Then $C$ is Hermitian self-dual if and only if  
$
   g(x) = h^\dagger(x),
$
where $h(x) = \frac{x^n - 1}{g(x)}$ and $h^\dagger(x)$ denotes the conjugate-reciprocal polynomial of $h(x)$.  
This condition imposes strong restrictions: it can be satisfied only when both the field characteristic and the code length $n$ are even.

\begin{theorem} \label{thm18}
  Let $p$ be a prime and let $n$ be a positive integer and write $n=Np^t$, where $N$ is a positive integer such that $p\nmid N$ and $t\geq 0$ is an integer. Then there exists a Hermitian self-dual cyclic code of length $n$ over $\mathbb{F}_{p^{2m}}$ if and only if $p=2$ and $t\geq 1$. 
 \end{theorem}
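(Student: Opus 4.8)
The plan is to translate Hermitian self-duality into a numerical condition on the exponents of the irreducible factors of $x^n-1$, exactly as in the Euclidean case of Theorem~\ref{thm11}, and then to exploit the fact that the factor $x-1$ is always fixed by the conjugate-reciprocal map. First I would recall, as noted immediately before the statement and following from Theorem~\ref{genHD} together with $C=C^{\perp_H}$, that a cyclic code $C$ of length $n$ over $\mathbb{F}_{p^{2m}}$ with generator polynomial $g(x)$ is Hermitian self-dual if and only if $g(x)=h^\dagger(x)$, where $h(x)=(x^n-1)/g(x)$. Next I would use that in characteristic $p$ one has $x^n-1=x^{Np^t}-1=(x^N-1)^{p^t}$, and that $x^N-1$ is squarefree because $p\nmid N$. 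Hence every monic irreducible divisor of $x^n-1$ occurs with multiplicity exactly $p^t$, so I may write $x^N-1=\prod_i f_i(x)$ with the $f_i$ distinct and irreducible and $g(x)=\prod_i f_i(x)^{e_i}$ with $0\le e_i\le p^t$.

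The next step is to analyze the action of the conjugate-reciprocal map on these factors. By Lemma~\ref{lem:fdagg} one has $f_a^\dagger=f_{-p^m a}$, and since $p^{2m}a\equiv a$ within each $p^{2m}$-cyclotomic coset, applying $\dagger$ twice returns $f_a$; thus $\dagger$ induces an involution $f_i\mapsto f_{\sigma(i)}$ on the set of irreducible factors, whose fixed points are precisely the SCRIM factors of type $I'$ from Lemma~\ref{charFacScrim}. Computing $h^\dagger=\prod_i f_{\sigma(i)}^{\,p^t-e_i}$ and comparing exponents factor by factor, the self-duality condition $g=h^\dagger$ becomes $e_i+e_{\sigma(i)}=p^t$ for every index $i$.

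Finally I would observe that the coset $C_{p^{2m}}(0)=\{0\}$ satisfies $-p^m\cdot 0\equiv 0$, so its corresponding factor $x-1$ is SCRIM and fixed by $\sigma$; equivalently ${\rm ord}(0)=1\in OG_{(p^m,1)}$ in Lemma~\ref{charFacScrim}. For this factor the condition reduces to $2e_0=p^t$, which forces $p^t$ to be even, that is $p=2$ and $t\ge 1$; this establishes necessity. For sufficiency, when $p=2$ and $t\ge 1$ I would exhibit an explicit self-dual code by setting $e_i=2^{t-1}$ on every SCRIM factor and, on each type-$II'$ pair $\{i,\sigma(i)\}$, setting $(e_i,e_{\sigma(i)})=(2^t,0)$. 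These choices satisfy $e_i+e_{\sigma(i)}=2^t$ for all $i$, so the resulting $g$ divides $x^n-1$ and satisfies $g=h^\dagger$, giving the desired code.

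The routine verifications are that $\dagger$ is genuinely an involution whose fixed points are the SCRIM factors and that each $f_i$ has nonzero constant term (so that $\deg f_i^\dagger=\deg f_i$ and the reindexing in $h^\dagger$ is valid); these follow from Lemmas~\ref{lem:fdagg} and~\ref{charFacScrim} and from $x\nmid x^N-1$. The key conceptual step, and the one that does all the work in ruling out odd $p^t$, is recognizing that $x-1$ is always a fixed SCRIM factor, so the constraint $2e_0=p^t$ is unavoidable; everything else is bookkeeping on the exponents.
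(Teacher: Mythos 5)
Your proposal is correct and follows essentially the same route the paper takes (and attributes to \cite{JLS2014}): reduce Hermitian self-duality to the exponent condition $g(x)=h^\dagger(x)$, observe that the conjugate-reciprocal map is an involution on the irreducible factors of $(x^N-1)^{p^t}$ forcing $e_i+e_{\sigma(i)}=p^t$, and use the always-fixed factor $x-1$ to conclude $2e_0=p^t$, hence $p=2$ and $t\geq 1$, with sufficiency given by the explicit exponent choice that also underlies Theorem~\ref{thm19}. The paper omits these details, citing \cite[Proposition~2.9]{JLS2014}, so your write-up is a faithful and complete filling-in of the sketched argument.
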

 
In this setting, we restrict our attention to the case where the code length is even and can be written in the form  
$
   n = 2^{t} N,
$
where $t$ is a positive integer and $N$ is an odd positive integer.  
The underlying field is assumed to be $\mathbb{F}_{2^{2m}}$ for some positive integer $m$.  
By combining Theorem~\ref{thm18} with the well-known criterion stating that a cyclic code of length $n$ over $\mathbb{F}_{2^{2m}}$ with generator polynomial $g(x)$ is Hermitian self-dual if and only if $g(x) = h^\dagger(x)$, we arrive at the following theorems, which provide a direct characterization of Hermitian self-dual cyclic codes in this case.

\begin{theorem}[{\cite[Theorem 3.9]{JLS2014}}] \label{thm19}
  Let $n$ be an even  positive integer and write $n=N2^t$, where $N$ is an odd positive integer  and $t$ is a positive integer.
Assume the factorization in \eqref{refacXn-1H}  and let $C$ be  a cyclic code  of length $n$ over $\mathbb{F}_{2^{2m}}$ with generator  polynomial $g(x)$.  Then $C$ is Hermitian self-dual if and only if 
  \[ g(x) =  \prod_{\substack{d|N\\ d\in OG_{({2^m},1}}} \prod _{j=1}^{\lambda_d}  f_{a_{dj}} (x) ^{2^{t-1} } \prod_{\substack{d|N\\ d\notin OG_{({2^m},1)}}} \prod _{j=1}^{\lambda_d/2}  f_{a_{dj}} (x)^{s_{dj}} f_{a_{dj}} ^\dagger(x) ^{p^t-s_{dj}} ,\] where $0\leq  {s_{dj}} \leq p^t$ for all   $d|N$ such that $d\notin OG_{({2^m},1)}$ and  $j=1,2,\dots, \gamma_d/2$.
\end{theorem}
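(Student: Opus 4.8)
The plan is to invoke the criterion recorded just before the theorem, namely that a cyclic code $C$ with monic generator polynomial $g(x)$ is Hermitian self-dual if and only if $g(x) = h^\dagger(x)$, where $h(x) = (x^n-1)/g(x)$, and then to translate this polynomial identity into arithmetic conditions on the exponents appearing in the factorization \eqref{refacXn-1H}. First I would write an arbitrary monic divisor $g(x)$ of $x^n-1$ in the form
\[
g(x) = \prod_{\substack{d|N\\ d\in OG_{({2^m},1)}}} \prod_{j=1}^{\lambda_d} f_{a_{dj}}(x)^{e_{dj}} \prod_{\substack{d|N\\ d\notin OG_{({2^m},1)}}} \prod_{j=1}^{\lambda_d/2} f_{a_{dj}}(x)^{s_{dj}} f_{a_{dj}}^\dagger(x)^{s'_{dj}},
\]
with all exponents lying in $\{0,1,\dots,2^t\}$, since by \eqref{refacXn-1H} every irreducible factor of $x^n-1$ occurs there with multiplicity $p^t = 2^t$.

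Next I would form $h(x) = (x^n-1)/g(x)$, which simply complements each exponent to $2^t$: the SCRIM factor $f_{a_{dj}}$ (type $I'$) acquires exponent $2^t - e_{dj}$, while within each type-$II'$ pair the factors $f_{a_{dj}}$ and $f^\dagger_{a_{dj}}$ acquire exponents $2^t - s_{dj}$ and $2^t - s'_{dj}$, respectively. I would then apply the conjugate-reciprocal operation to $h(x)$. By Lemma~\ref{lem:fdagg}, $\dagger$ carries the irreducible polynomial induced by $C_{p^{2m}}(a)$ to the one induced by $C_{p^{2m}}(-p^m a)$; hence, by Lemma~\ref{charFacScrim}, it fixes every SCRIM factor and interchanges the two constituents $f_{a_{dj}}$ and $f^\dagger_{a_{dj}}$ of each conjugate-reciprocal pair. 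Therefore, in $h^\dagger(x)$ the SCRIM factor $f_{a_{dj}}$ retains exponent $2^t - e_{dj}$, whereas within each pair the exponents of $f_{a_{dj}}$ and $f^\dagger_{a_{dj}}$ become $2^t - s'_{dj}$ and $2^t - s_{dj}$, respectively.

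Comparing $g(x)$ with $h^\dagger(x)$ factor by factor then yields exactly the asserted conditions. For each SCRIM factor the equality $e_{dj} = 2^t - e_{dj}$ forces $e_{dj} = 2^{t-1}$, which is a genuine nonnegative integer precisely because $t \geq 1$; this is consistent with the existence statement in Theorem~\ref{thm18}. For each type-$II'$ pair the equalities $s_{dj} = 2^t - s'_{dj}$ give $s'_{dj} = 2^t - s_{dj}$ with $s_{dj}$ ranging freely over $\{0,\dots,2^t\}$. These are precisely the exponent patterns displayed in the theorem, so both implications follow at once.

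The hard part will be the bookkeeping attached to the $\dagger$ operation: one must verify carefully that $\dagger$ is an involution that swaps the two members of each conjugate-reciprocal pair while fixing each SCRIM factor, and that replacing $h^\dagger(x)$ by its monic normalization (the actual generator polynomial of $C^{\perp_H}$) does not disturb the exponent comparison. Both points are controlled by Lemmas~\ref{lem:fdagg} and~\ref{charFacScrim} together with the convention that generator polynomials are monic, so the leading-coefficient adjustment produced by $\dagger$ is irrelevant to the divisibility structure and may be absorbed harmlessly.
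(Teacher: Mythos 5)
Your proposal is correct and follows exactly the route the paper indicates: it reduces Hermitian self-duality to the identity $g(x)=h^\dagger(x)$, uses Lemmas~\ref{lem:fdagg} and~\ref{charFacScrim} to see that $\dagger$ fixes SCRIM factors and swaps the members of each conjugate-reciprocal pair, and then compares exponents in \eqref{refacXn-1H} to force $e_{dj}=2^{t-1}$ on type-$I'$ factors and $s'_{dj}=2^t-s_{dj}$ on type-$II'$ pairs. This matches the paper's derivation (which it leaves as a direct consequence of Theorem~\ref{thm18} and the criterion $g=h^\dagger$), so no further comment is needed.
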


Following the result presented in \cite[Corollary~3.7]{JLS2014}, an explicit   formula for the number of   Hermitian self-dual cyclic codes of arbitrary even length $n$ over   $\mathbb{F}_{2^{2m}}$ is derived. This formula is established based on the structural properties of the  the SCRIM factors of  $x^n - 1$   over $\mathbb{F}_{2^{2m}}$ and their link with oddly-good integers in $OG_{(2^m,1)}$.  

\begin{theorem} \label{thm20}
    Let $n$ be an even  positive integer and write $n=N2^t$, where $N$ is an odd positive integer  and $t$ is a positive integer.
Then the number of  Hermitian self-dual  cyclic code  of length $n$ over $\mathbb{F}_{2^{2m}}$ is
\[(1+2^t) ^ {\displaystyle \sum _{ {\substack{d|N\\ d\notin G_{({2^m},1)}}}  }  \frac{\Phi(d)}{ 2{\rm ord}_d(2^{2m})}}.\]
\end{theorem}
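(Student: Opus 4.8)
The plan is to obtain the count by directly enumerating the generator polynomials described in Theorem~\ref{thm19}, so the formula drops out of a bookkeeping argument rather than a fresh structural analysis. Since Theorem~\ref{thm18} tells us that Hermitian self-dual cyclic codes over $\mathbb{F}_{2^{2m}}$ exist exactly when the length is even, I would assume throughout that $n=N2^t$ with $N$ odd and $t\ge 1$, so that the factorization \eqref{refacXn-1H} and the self-dual characterization of Theorem~\ref{thm19} both apply. Recall that in \eqref{refacXn-1H} the factors indexed by $d\in OG_{(2^m,1)}$ are the SCRIM factors (type $I'$), while those indexed by $d\notin OG_{(2^m,1)}$ are exactly the type $II'$ cosets, which pair up as conjugate-reciprocal pairs $\{f_{a_{dj}},f_{a_{dj}}^\dagger\}$ via Lemmas~\ref{charFacScrim} and \ref{lem:fdagg}.

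First I would read off from Theorem~\ref{thm19} that a cyclic code is Hermitian self-dual if and only if its (unique) generator polynomial has the form
\[ g(x) = \prod_{\substack{d|N\\ d\in OG_{(2^m,1)}}} \prod_{j=1}^{\lambda_d} f_{a_{dj}}(x)^{2^{t-1}} \prod_{\substack{d|N\\ d\notin OG_{(2^m,1)}}} \prod_{j=1}^{\lambda_d/2} f_{a_{dj}}(x)^{s_{dj}}\, f_{a_{dj}}^\dagger(x)^{2^t - s_{dj}}, \]
with each $s_{dj}\in\{0,1,\dots,2^t\}$. The crucial observation is that the SCRIM part carries no freedom: every factor indexed by $d\in OG_{(2^m,1)}$ is forced to occur with the single fixed multiplicity $2^{t-1}$. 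All the degrees of freedom live in the conjugate-reciprocal pairs, and each such pair contributes one independent parameter $s_{dj}$ ranging over the $2^t+1$ values $0,1,\dots,2^t$.

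Next I would convert this into a product count. Because the assignment $C\mapsto g(x)$ is a bijection between cyclic codes and monic divisors of $x^n-1$ (Theorem~\ref{genC} together with the generator-polynomial description), distinct tuples $(s_{dj})$ give distinct codes, and the choices across different pairs are independent. Hence the number of Hermitian self-dual codes is $(1+2^t)^{P}$, where $P$ is the total number of conjugate-reciprocal pairs in \eqref{refacXn-1H}. For each $d\mid N$ with $d\notin OG_{(2^m,1)}$ there are precisely $\lambda_d/2$ such pairs, so, using $\lambda_d=\Phi(d)/\mathrm{ord}_d(2^{2m})$,
\[ P = \sum_{\substack{d|N\\ d\notin OG_{(2^m,1)}}} \frac{\lambda_d}{2} = \sum_{\substack{d|N\\ d\notin OG_{(2^m,1)}}} \frac{\Phi(d)}{2\,\mathrm{ord}_d(2^{2m})}, \]
and substituting $P$ into $(1+2^t)^{P}$ gives the stated formula (with the index set $d\notin OG_{(2^m,1)}$ being the correct description of the type $II'$ cosets).

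The main obstacle I anticipate is not the arithmetic but justifying the two structural facts that make the pair count well defined: that for $d\notin OG_{(2^m,1)}$ the $\lambda_d$ cyclotomic cosets of order $d$ contain no type $I'$ coset, so they split evenly into $\lambda_d/2$ conjugate-reciprocal pairs (in particular $\lambda_d$ is even, so $\lambda_d/2$ is an integer), and that the SCRIM exponents are genuinely rigid and contribute a factor of $1$ to the product. Both follow from Lemma~\ref{charFacScrim}, which ties the type of $C_{p^{2m}}(a)$ to membership of $\mathrm{ord}(a)$ in $OG_{(p^m,1)}$, together with the involution $a\mapsto -2^m a$ of Lemma~\ref{lem:fdagg}; the remaining work is the routine verification that this involution is fixed-point-free on the type $II'$ cosets, ensuring no pair is counted twice.
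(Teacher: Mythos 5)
Your counting argument is correct and is exactly the derivation the paper intends: Theorem~\ref{thm19} rigidifies every SCRIM exponent at $2^{t-1}$ and leaves one free parameter $s_{dj}\in\{0,1,\dots,2^t\}$ per conjugate-reciprocal pair, so the count is $(1+2^t)^P$ with $P=\sum_{d\mid N,\, d\notin OG_{(2^m,1)}}\Phi(d)/\bigl(2\,{\rm ord}_d(2^{2m})\bigr)$. You are also right to flag that the index set in the displayed formula should read $d\notin OG_{(2^m,1)}$ rather than $d\notin G_{(2^m,1)}$; the two sets happen to coincide for the paper's example with $N=15$, which masks the typo.
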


\begin{example}
    From Example \ref{ex9}, 
     we have the factorization of $x^{15}-1 $ over $\mathbb{F}_{2^2}=\{0,1,\alpha,\alpha^2=1+\alpha\}$  of the form 
\[x^{15}-1= (x + 1)(x^2 + \alpha^2  x + 1)(x + \alpha )(x^2 + \alpha  x + 1)(x + \alpha^2 )\left((x^2 + \alpha  x + \alpha   )(x^2 + x + \alpha)\right)
\left((x^2 + \alpha^2  x + \alpha^2  )(x^2 + x + \alpha^2 )\right),\]
where  the first five terms are SCRIM and the last four terms form two conjugate-reciprocal polynomial pairs.
From Theorem~\ref{thm19},  it follows that
$ (x + 1)(x^2 + \alpha^2  x + 1)(x + \alpha )(x^2 + \alpha  x + 1)(x + \alpha^2 )\left((x^2 + \alpha  x + \alpha   )(x^2 + x + \alpha)\right)
\left((x^2 + \alpha^2  x + \alpha^2  )(x^2 + x + \alpha^2 )\right)$, $ (x + 1)(x^2 + \alpha^2  x + 1)(x + \alpha )(x^2 + \alpha  x + 1)(x + \alpha^2 )(x^2 + \alpha  x + \alpha   ) ^2
(x^2 + \alpha^2  x + \alpha^2  )^2$, and $ (x + 1)(x^2 + \alpha^2  x + 1)(x + \alpha )(x^2 + \alpha  x + 1)(x + \alpha^2 ) (x^2 + x + \alpha)^2
  (x^2 + x + \alpha^2 )^2  $
are generator polynomials  of Hermitian self-dual cyclic codes of length $30$ over $\mathbb{F}_{2^2}$.  Based on Theorem \ref{thm20}, the number of Hermitian  self-dual codes of length $30=2\cdot 15$ over $\mathbb{F}_{2^{2}}$  is 
   \[(1+2) ^ {\displaystyle \sum _{ {\substack{d|15\\ d\notin G_{({2},1)}}}  }  \frac{\Phi(d)}{ 2{\rm ord}_d(2^{2})}} = (1+2) ^ {\displaystyle \sum _{ d\in \{15\} } \frac{\Phi(d)}{ 2{\rm ord}_d(2^{2})}}=3^2=9.\]
\end{example}

 ~
 
\section{Summary} \label{sec5}

This paper provides a comprehensive review of the concept of good integers, highlighting both their mathematical foundations and their practical significance. It begins with a systematic presentation of number-theoretic properties and characterizations of good integers, including conditions for their existence and divisibility patterns. These results are presented in accessible forms through algorithms and visual aids to support computation and understanding.

The paper then turns to applications in algebraic coding theory, with a particular emphasis on cyclic codes over finite fields. It is demonstrated how the concept of good integers plays a critical role in the characterization and construction of self-dual cyclic codes and complementary dual cyclic codes. These constructions are considered with respect to both the Euclidean and Hermitian inner products, offering insight into how good integers influence duality properties of codes. Several illustrative examples are provided throughout the paper to show how theoretical results translate into concrete coding scenarios, thereby bridging number-theoretic concepts and practical code design.

Beyond the applications discussed in Section~\ref{sec4}, good integers have been further employed in the study of a broad range of algebraic codes. Notably, they have been applied in the construction and analysis of self-dual abelian codes \cite{JLLX2012, JLS2014}, complementary dual abelian codes \cite{BJU2018}, self-dual quasi-abelian codes \cite{JL2015, PJR2018}, and complementary dual quasi-abelian codes \cite{JPD2017}. Moreover, good integers have contributed to the study of code hulls, particularly the hulls of cyclic codes \cite{SJLP2015} and abelian codes \cite{J2018a}. These results highlight the versatility and unifying role of good integers across various code families and algebraic settings.  The reader is referred to the works cited above for such applications.

This survey emphasizes the unifying role of good integers in bridging number theory with   applications in coding.   The study of additional applications of good integers, either within coding theory or in other mathematical and computational disciplines, presents a promising direction for future research. Exploring their roles in new algebraic frameworks, combinatorial structures, or cryptographic systems may yield novel insights and broaden the impact of their theory.


\section*{Acknowledgments}
This research was funded by the National Research Council of Thailand and Silpakorn University  under Research Grant  N42A650381.

\bibliographystyle{abbrv}
\bibliography{BIB-AMS}
\end{document}